\newtheorem{thm}{Theorem}[section]
\newtheorem{corol}[thm]{Corollary}
\newtheorem{lemma}[thm]{Lemma}
\newtheorem{prop}[thm]{Proposition}
\newtheorem{claim}[thm]{Claim}
\newtheorem{defin}[thm]{Definition}
\newtheorem{note}[thm]{Remark}
\title{Nonsmooth Pseudodifferential Boundary Value Problems on Manifolds}
\author{Helmut Abels\footnote{Fakult\"at f\"ur Mathematik,  
Universit\"at Regensburg,
93040 Regensburg,
Germany, e-mail: {\sf helmut.abels@mathematik.uni-regensburg.de}}~ and Carolina Neira Jim\'enez\footnote{Departamento de Matem\'aticas,
Universidad Nacional de Colombia,
Carrera 30 \# 45-03,
Bogot\'a, Colombia, e-mail: {\sf
cneiraj@unal.edu.co}}}
\newcommand{\R}{{\mathbb R}}
\newcommand{\Rn}{{\mathbb{R}^n}}
\newcommand{\C}{{\mathbb C}}
\newcommand{\N}{{\mathbb N}}
\newcommand{\Z}{{\mathbb Z}}
\DeclareMathOperator{\Op}{Op}
\DeclareMathOperator{\supp}{supp}
\DeclareMathOperator{\dist}{dist}
\newcommand{\dbar}{d{\hskip-1pt\bar{}}\hskip1pt}
\newcommand{\twobytwo}[4]{\begin{pmatrix}#1 & #2 \\ #3 & #4
                            \end{pmatrix}}
\begin{document}

\maketitle

\begin{abstract}
We study pseudodifferential boundary value problems in the context of the Boutet de Monvel calculus or Green operators, with nonsmooth coefficients on smooth compact manifolds with boundary. In order to have a definition that is independent of the choice of (smooth) coordinates, we  prove that nonsmooth Green operators are invariant under smooth coordinate transformations. 
\end{abstract}

\noindent{\bf Key words:} Pseudodifferential boundary value problems, non-smooth
pseudo\-differential operators, coordinate changes

\noindent{\bf AMS-Classification:} 35 S 15, 35 J 55

\section{Introduction}

In \cite{BoutetDeMonvel} L.\ Boutet de Monvel introduced a class of operators to model differential and pseudodifferential boundary value problems, which is closed under composition and adjoints and can be used to construct parametrices of elliptic operators. This was the basis for many other pseudodifferential calculi for boundary value problems,  cf.\ e.g.\ Grubb \cite{FunctionalCalculus}. Moreover, it  was used in index theory, cf.\  \cite{BoutetDeMonvel}, Rempel and Schulze \cite{RempelSchulze}, in the theory of Stokes equations, cf. Grubb and Solonnikov~\cite{MathScand,GrubbNStLowRegularity},  in geometrical problems as trace expansions, cf.\ e.g.\ Grubb and Schrohe~\cite{TraceExpansions}, and others, cf.\ \cite{FunctionalCalculus}. 

In most pseudodifferential calculi smoothness of the symbols with respect to all variables is assumed. But in applications to nonlinear partial differential equations, operators with nonsmooth symbols with respect to the space variable $x$ occur naturally since the symbol depends on the solution of the PDE itself, which has a~priori limited smoothness. 
In the case of pseudodifferential operators on $\R^n$ symbol classes with nonsmooth $x$-dependence were first introduced by Kumano-Go and Nagase~\cite{KumanoGoNagase}. Properties of these operators were studied e.g.\ by Marschall~\cite{MarschallZAA}, Witt~\cite{Witt}, Taylor~\cite{TaylorNonlinearPDE, ToolsForPDE}, Barraza-Mart{\'{\i}}nez, Denk and Hern{\'a}ndez-Monz{\'o}n~\cite{DenkEtAlNonsmoothPsDOs}, and A.\ and Pfeuffer~\cite{AbelsPfeufferSpectralInvariance,AbelsPfeufferCharacterization, AbelsPfeufferFredholm}.

In the case of a half-space $\overline{\R^n_+}$ the calculus for pseudodifferential boundary value problems was extended to nonsmooth symbols by the first author in \cite{NonsmoothGreen, HInftyInLayer}. The calculus was applied e.g.\ in A.\ and Terasawa~\cite{BIPVariableViscosity},  A., Grubb and Wood~\cite{AbelsGrubbWood}, and Grubb~\cite{GrubbSpectralAsymptotics} to study the Stokes equations, elliptic boundary value problems in domains with low boundary regularity, and spectral asymptotics for selfadjoint nonnegative singular Green operators.
In the case of nonsmooth $x$-dependence the usual algebraic properties of the associated operator classes are no longer true, e.g.\ the class is no longer closed under composition and the usual asymptotic expansions no longer hold true. But there are relaxed versions of these statements up to lower order operators. E.g.\ under suitable smoothness assumptions the composition of two nonsmooth pseudodifferential operators is a pseudodifferential operator up to an operator of lower order and a finite piece of the usual asymptotic expansion holds true, cf.\ \cite[Theorem 1.2]{NonsmoothGreen}.

In the present contribution we want to complement the results in \cite{NonsmoothGreen} , where only the case that the domain is a half-space is treated, and the results in the master thesis \cite{koeppl:PDOSnonsmooth}, where pseudodifferential operators with nonsmooth coefficients on manifolds were studied. We will show that the calculus is closed under smooth coordinate changes which preserve the boundary, and define nonsmooth Green operators on smooth manifolds. Of course one could also consider nonsmooth Green operators on manifolds with limited smoothness. But on one hand this would make the calculus very technical and complicated, e.g.\ when one considers operators of non-zero order the smoothness assumptions for the domain and target manifold would need to be different to get sharp results. On the other hand in applications one often parametrize a nonsmooth submanifold (or domain) in $\R^n$ by a smooth ``reference'' manifold (or domain) close by, cf.\ e.g.\ \cite[Section~2.3]{PruessSimonett}. By a simple pull-back equations on the nonsmooth manifold carry over to equations on the smooth reference manifold (with nonsmooth coefficients). In this way nonsmoothness can be transferred from the manifolds to the symbols of the operators. That is the reason why we restrict ourselves to nonsmooth Green operators on smooth manifolds. 
The article is organized as follows: In Section~\ref{sec:prelim} we give the preliminaries needed to show that the calculus for nonsmooth pseudodifferential boundary value problems is closed under coordinate changes. 
Section~\ref{section:change of variables} is devoted to the proof of the invariance under a suitable coordinate change for nonsmooth pseudodifferential operators on the Euclidean space. The strategy corresponds to ``freezing the coefficients" of the symbols of the operators and the idea is to express the coordinate changes of a symbol as a composition of some appropriate continuous maps. In Section~\ref{section:Truncation of nonsmooth pdos} we study the kernel of nonsmooth pseudodifferential operators after composition with multiplication operators, which appear in the definition of nonsmooth pseudodifferential operators on manifolds.
In Section~\ref{sec:Green} we recall the definition of nonsmooth Green operators: nonsmooth Poisson operators, nonsmooth trace operators and nonsmooth singular Green operators. After this, in Section~\ref{section:change of variables Green} we show the invariance under a coordinate change for these operators on the Euclidean space, again by the technique of freezing the coefficients. In Section~\ref{sec:Localization} we study the kernel of nonsmooth Green operators after composition with multiplication operators.
In Section~\ref{section:transmission condition} we show that the nonsmooth transmission condition is preserved under a smooth coordinate change. Finally Section~\ref{sec:Manifolds} is devoted to the definition of nonsmooth Green operators on smooth manifolds. 

\smallskip

\noindent
\textbf{Acknowlegdement:} The second author gratefully acknowledges support through DFG, GRK 1692 ``Curvature, Cycles and Cohomology'' during parts of the work.

\section{Preliminaries}\label{sec:prelim}

Along the document $C$ will denote a generic constant.

\subsection{H\"older Spaces}

By $F$ we will denote a Fr\'echet space equipped with a family of seminorms $\{|\cdot|_i\}_{i\in\N}$. The set $L^\infty(\R^n;F)$ consists of strongly measurable functions $f\colon \R^n\to F$ such that $\underset{z\in \R^n}{{\rm ess}\sup}\{|f(z)|_i\}<\infty$ for all $i\in\N$. Then, $L^\infty(\R^n;F)$\index{$L^\infty(\R^n;F)$} is equipped with the countable set of seminorms given by 
\[
 \|f\|^i_{L^\infty(\R^n;F)}:=\underset{z\in \R^n}{{\rm ess}\sup}\{|f(z)|_i\} \text{\ \ for all }i\in\N.
\]

\begin{defin}[{\cite[A.1]{TaylorNonlinearPDE}}]
\label{Def:Holder space}
 Let $\tau>0$. The H\"older space of $F$--valued functions on $\R^n$ of degree $\tau$, $C^\tau(\R^n;F)$\index{$C^\tau(\R^n;F)$}, consists of functions $f:\R^n\to F$ with H\"older continuous derivatives $\partial_z^\beta f$ of degree $\tau-[\tau]$ for all $|\beta|\leq [\tau]$, equipped with the countable set of seminorms given for all $i\in\N$ by
\begin{equation}
\label{normCtau}
 \lVert f\lVert^i_{C^\tau(\R^n;F)}:=\sum_{|\beta|\leq[\tau]}\left\lVert\partial_z^\beta f\right\lVert^i_{L^\infty(\R^n;F)} + \sum_{|\beta|=[\tau]}\underset{z\not=y}{\sup}\dfrac{\left|\partial_z^\beta f(z) - \partial_z^\beta f(y)\right|_i}{|z-y|^{\tau-[\tau]}}.
\end{equation}
\end{defin}

We also use the notation $C^\tau_z(\R^n;F)$\index{$C^\tau_z(\R^n;F)$} to indicate the space of functions which are H\"older continuous with respect to the variable $z\in \R^n$, and the notation $ \lVert f\lVert^i_{C^\tau_z(\R^n;F)}$ if the norm of the function $f$ is taken with respect to $z$.

\begin{note}
\label{rk:norm Ctau x-ygeq1}
 In the definition of the seminorms $\lVert \cdot\lVert^i_{C^\tau(\R^n;F)}$ given in \eqref{normCtau}, in the second term on the right hand side it is enough to consider $z\not=y$ such that $|z-y|<1$. Indeed, if $z,y\in \R^n$ are such that $|z-y|\geq1$, then
\begin{align*}
 \left|\partial_z^\beta f(z) - \partial_z^\beta f(y)\right|_i
 &\leq2\left\lVert\partial_z^\beta f\right\lVert^i_{L^\infty(\R^n;F)}
\leq C|z-y|^{\tau-[\tau]}.
\end{align*}
\end{note}

\begin{note}
\label{partial f Ctau-1}
 Let $\tau\notin\N,\tau>1$. If $f\in C^\tau(\R^n;F)$, then  $\partial_{z_j}f\in C^{\tau-1}(\R^n;F)$ for all $j=1,\ldots,n$. Moreover, $\left\lVert\partial_{z_j}f\right\lVert^i_{C^{\tau-1}(\R^n;F)}\leq\left\lVert f\right\lVert^i_{C^\tau(\R^n;F)}$ for all $i\in\N$.
\end{note}

Finally, for an open set $U\subseteq \R^n$ we denote by $C^\infty (U;F)$ the set of all smooth $f\colon U\to F$, $C^\infty_b(U;F)$ the subset of all $f\in C^\infty(U;F)$ that are bounded and have bounded derivatives of arbitrary order (with respect to all seminorms), and $C^\infty_0(U;F)$ is the set of all $f\in C^\infty (U;F)$ with compact support. In the case $F=\C$ we simply write $C^\infty(U), C^\infty_b(U)$ and $C^\infty_0(U)$ for the corresponding spaces.
\subsection{Nonsmooth Pseudodifferential Operators}
Let $X$ be a Banach space, with norm $\|\cdot\|_X$. For $\xi\in\R^n$, we use the notation $\langle\xi\rangle:=(1+|\xi|^2)^{1/2}$.
\begin{defin}
[{\cite[Def.\ 3.1]{NonsmoothGreen} and \cite[Def.\ 3.1]{koeppl:PDOSnonsmooth}}]
\label{def:Ctau symbols}
 The symbol space $C^\tau S^m_{1,0}(\R^n\times \R^n;X)$\index{$C^\tau S^m_{1,0}(\R^n\times \R^n;X)$}, $\tau>0$, $m\in\R$, is the set of all functions $p:\R^n\times \R^n\to X$ that are smooth with respect to $\xi\in \R^n$ and are in $C^\tau$ with respect to $x\in \R^n$ satisfying for all $\alpha\in\N^{n}$ the following estimates
\begin{align}
 \left\lVert\partial_{\xi}^\alpha p(\cdot,\xi)\right\lVert_{C^\tau(\R^n;X)}&\leq C_{\alpha}\langle\xi\rangle^{m-|\alpha|} \quad \quad \text{ for all }\xi\in\R^n.\label{estimate2}
\end{align}
 We will often use the shorthand notation $C^\tau S^m_{1,0}$\index{$C^\tau S^m_{1,0}$} for $C^\tau S^m_{1,0}(\R^n\times \R^n;X)$. 
 We equip the space $C^\tau S^m_{1,0}$ with the following seminorms: For all $p\in C^\tau S^m_{1,0}$ and $i\in\N$,
\begin{equation}
 |p|_{C^\tau S^m_{1,0}}^{i}:=\underset{|\alpha|\leq i}{\max}\ \underset{\xi\in \R^n}{\sup}\left\lVert\partial_{\xi}^\alpha p(\cdot,\xi)\right\lVert_{C^\tau(\R^n;X)}\langle\xi\rangle^{-m+|\alpha|}.
\end{equation}
\end{defin}
We note that
\begin{equation*}
  S^m_{1,0}(\R^n\times \R^n;X)=\bigcap_{\tau>0} C^\tau S^m_{1,0}(\R^n\times \R^n;X),
\end{equation*}
where $S^m_{1,0}(\R^n\times \R^n;X)$\index{$S^m_{1,0}(\R^n\times \R^n;X)$} is the standard H\"ormander class of smooth pseudodifferential symbols in its $X$--valued variant. For all $p\in S^m_{1,0}$ and $i\in\N$
\[
 |p|_i^{(m)}:=\underset{|\alpha|,|\beta|\leq i}{\max}\ \underset{(x,\xi)\in \R^n\times \R^n}{\sup}\left\|\partial_{\xi}^\alpha\partial_x^\beta p(x,\xi)\right\|_X\langle\xi\rangle^{-m+|\alpha|}
\]
defines a sequence of seminorms, which yield the standard topology on $S^m_{1,0}(\R^n\times \R^n;X)$.

For the following let $X:=\mathcal{L}(X_0,X_1)$, where $X_0,X_1$ are two Banach spaces, let $\mathcal{S}(\R^n;X_0)$\index{$\mathcal{S}(\R^n;X_0)$} be the space of $X_0$--valued smooth rapidly decreasing functions on $\R^n$ (if $X_0=\C$, we use the notation $\mathcal{S}(\R^n)$), and let $p\in C^\tau S^m_{1,0}(\R^n\times \R^n;X)$. Then
\begin{align*}
  (\Op(p)u)(x)\equiv (p(x,D_x)u)(x)&:=\int_{\R^n}{e^{ix\cdot\xi}p(x, \xi)\widehat u(\xi)\,\dbar\xi} \qquad \text{for all }x\in\R^n, u \in \mathcal{S}(\R^n;X_0),
\end{align*}
defines a bounded linear operator $p(x,D_x)\colon \mathcal{S}(\R^n;X_0)\to C^\tau(\R^n;X_1)$\index{$p(x,D_x)$}, which is the {\it pseudodifferential operator} associated to the symbol $p$.
Here $\widehat u(\xi):=\int_{\R^n}{e^{-iy\cdot\xi}u(y)\,dy}$ is the Fourier transform of $u$ and $\dbar\xi:=(2\pi)^{-n}d\xi$.

If $p\in C^\tau S^m_{1,0}$, then $p(x,\xi)$ is smooth in $\xi$. Hence we obtain as in the smooth case
\begin{align}
 p(x,D_x)f(x)&=\int_{\R^n}e^{ix\cdot\xi}p(x,\xi)\hat{f}(\xi)\,\dbar\xi 
 ={\rm Os} - \iint e^{i(x-y)\cdot\xi}p(x,\xi)f(y)\,dy\,\dbar\xi \label{eq:oscillatory integral}
\end{align}
for all $x\in\R^n$ and $f\in \mathcal{S}(\R^n;X_0)$, since $p(x,\xi)f(y)$ belongs to the standard space of amplitudes ${\mathcal{A}_{0,0}^m}$\index{${\mathcal{A}_0^m}(\R^n\times\R^n)$} with respect to $(y,\xi)\in\R^n\times\R^n$, cf.\ e.g.\ \cite[Chapter 1, \S 6]{KumanoGo}. We note that $x$ is only a fixed parameter for the oscillatory integral. Therefore, the smoothness with respect to $x$ is irrelevant. We recall that ${\mathcal{A}_{0,0}^m}$ consists of all smooth $a\colon \R^n\times \R^n \to X$ such that for all $\alpha,\beta \in \N_0^n$ there is some constant $C_{\alpha,\beta}>0$ such that
\begin{equation*}
  \|\partial_y^\alpha\partial_\eta^\beta a(y,\eta)\|_X \leq C_{\alpha,\beta} (1+|\xi|)^{m}\qquad \text{for all }y,\eta\in\R^n.
\end{equation*}
In the case $X=\C$ the set and the associated oscillatory integrals are discussed in \cite[Chapter 1, \S 6]{KumanoGo}. The definition and results carry over to the $X$-valued setting in a straight forward manner.

\subsection{Coordinate Changes in the Smooth Case}

\begin{defin}[{\cite[A.5]{FunctionalCalculus}}]
\label{def:bounded smooth diffeo}
Let $U$ be either $\R^n$ or $\overline{\R_+^n}:=\{x\in\R^n:x_n\geq0\}$. A smooth diffeomorphism $\kappa:U\to U$ is said to be a \emph{bounded smooth diffeomorphism} if all derivatives of $\kappa$ and $\kappa^{-1}$ are bounded.
\end{defin}

If $\kappa$ is a bounded smooth diffeomorphism, then by the mean value theorem in several variables applied to $\kappa^{-1}$ (see \cite[Lemma 2.3]{koeppl:PDOSnonsmooth}), for all $x,y\in U$ the matrix
\[
 M_{\kappa^{-1}}(x,y):=\int_0^1\mathcal{D}(\kappa^{-1})(x+t(y-x))\,dt,
\]
satisfies
\begin{equation}
\label{eq:regular diffeo matrix}
 \kappa^{-1}(x)-\kappa^{-1}(y)=M_{\kappa^{-1}}(x,y)\big(x-y\big).
\end{equation}
By the same argument applied to $\kappa$ instead of $\kappa^{-1}$ and \eqref{eq:regular diffeo matrix}, we obtain that there exist constants $C_1,\,C_2>0$ such that (see \cite[(A.61)]{FunctionalCalculus} and \cite[(4.8)]{koeppl:PDOSnonsmooth})
\begin{equation}
 C_1|x-y|\leq \left|\kappa^{-1}(x)-\kappa^{-1}(y)\right|\leq C_2|x-y| \qquad \text{ for all }x,y\in U.
\end{equation}

Now we can state the invariance of pseudodifferential operators under coordinate changes in the following way:
\begin{thm}
\label{thm:change of variables smooth pdos 2}
Let $\kappa:\R^n\to \R^n$ be a bounded smooth diffeomorphism and let $A=p(x,D_x)$, where $p\in S^m_{1,0}(\Rn\times\R^n)$ for some $m\in\R$. Then, the operator $A_\kappa$ given by $A_\kappa(u):=[A(u\circ\kappa)]\circ\kappa^{-1}$ for all $u\in C_0^\infty(\R^n)${} is a pseudodifferential operator of order $m$ on $\R^n$. Moreover, if we denote by $p_\kappa$\index{$p_\kappa$} its symbol, the mapping 
$$S^m_{1,0}(\Rn\times \Rn)\ni p\mapsto p_\kappa \in S^m_{1,0}(\Rn\times \Rn)$$ 
is continuous.
\end{thm}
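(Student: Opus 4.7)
The plan is to express $A_\kappa$ as an oscillatory integral in the standard pseudodifferential form and then extract the symbol $p_\kappa$. Using \eqref{eq:oscillatory integral} for $A(u\circ\kappa)$ and evaluating at $\kappa^{-1}(x)$ yields
\begin{equation*}
A_\kappa u(x) = {\rm Os}-\iint e^{i(\kappa^{-1}(x)-y)\cdot\eta}\,p(\kappa^{-1}(x),\eta)\,u(\kappa(y))\,dy\,\dbar\eta.
\end{equation*}
After the change of variables $z=\kappa(y)$, which introduces the factor $|\det\mathcal{D}\kappa^{-1}(z)|$, identity \eqref{eq:regular diffeo matrix} recasts the phase as $(x-z)\cdot M_{\kappa^{-1}}(x,z)^T\eta$. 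Substituting $\xi = M_{\kappa^{-1}}(x,z)^T\eta$ in the $\eta$-variable puts the expression into the standard form
\begin{equation*}
A_\kappa u(x) = {\rm Os}-\iint e^{i(x-z)\cdot\xi}\,a(x,z,\xi)\,u(z)\,dz\,\dbar\xi,
\end{equation*}
with amplitude
\begin{equation*}
a(x,z,\xi) = \frac{|\det\mathcal{D}\kappa^{-1}(z)|}{|\det M_{\kappa^{-1}}(x,z)|}\,p\bigl(\kappa^{-1}(x),M_{\kappa^{-1}}(x,z)^{-T}\xi\bigr).
\end{equation*}
A standard amplitude-to-symbol reduction, cf.\ \cite[Chapter 2]{KumanoGo}, then produces the desired $p_\kappa\in S^m_{1,0}(\R^n\times\R^n)$, whose leading term in the asymptotic expansion is $a(x,x,\xi) = p(\kappa^{-1}(x),\mathcal{D}\kappa(\kappa^{-1}(x))^T\xi)$, the familiar cotangent-bundle transformation law.

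The main obstacle is that the substitution $\xi=M_{\kappa^{-1}}(x,z)^T\eta$ is only legitimate when $M_{\kappa^{-1}}(x,z)$ is uniformly invertible in $(x,z)$. On the diagonal $M_{\kappa^{-1}}(x,x)=\mathcal{D}\kappa^{-1}(x)$ is uniformly invertible, because $\kappa$ being a bounded smooth diffeomorphism forces $\mathcal{D}\kappa(\kappa^{-1}(x))=(\mathcal{D}\kappa^{-1}(x))^{-1}$ to be uniformly bounded. However, the averaged matrix $M_{\kappa^{-1}}(x,z)$ can in principle degenerate for $(x,z)$ far from the diagonal. I would therefore introduce a cutoff $\chi\in C_0^\infty(\R^n)$ with $\chi(\zeta)=1$ for $|\zeta|\leq\delta/2$ and $\chi(\zeta)=0$ for $|\zeta|\geq\delta$, with $\delta$ chosen small enough that $|\det M_{\kappa^{-1}}|$ stays uniformly bounded below on the support of $\chi(x-z)$. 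The complementary piece multiplied by $1-\chi(x-z)$ contributes a smoothing operator: the Schwartz kernel of $A\in\Op S^m_{1,0}$ is smooth and rapidly decreasing off the diagonal, and this property transfers to $A_\kappa$ through \eqref{eq:regular diffeo matrix} together with the bound $|\kappa^{-1}(x)-\kappa^{-1}(z)|\geq C_1|x-z|$, so the corresponding symbol lies in $S^{-\infty}\subset S^m_{1,0}$.

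It remains to verify that the truncated amplitude $\chi(x-z)\,a(x,z,\xi)$ lies in the standard class of amplitudes of order $m$ in $(x,z,\xi)$, and that the amplitude-to-symbol reduction is seminorm-continuous. Both are direct chain-rule calculations: uniform bounds on derivatives of $M_{\kappa^{-1}}$, $M_{\kappa^{-1}}^{-1}$, and $1/|\det M_{\kappa^{-1}}|$ on $\supp\chi$ (all inherited from the boundedness of derivatives of $\kappa^{\pm 1}$) reduce amplitude estimates to symbol estimates on $p$. Continuity of $p\mapsto p_\kappa$ then follows by tracking constants: each seminorm $|p|_i^{(m)}$ feeds linearly into a seminorm of the amplitude, and hence, by continuity of the amplitude-to-symbol map, into a seminorm of $p_\kappa$.
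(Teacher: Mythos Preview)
Your argument is correct and is essentially the standard Kumano-Go proof. The paper does not actually supply its own proof of this theorem: it simply cites \cite[Chapter~2, Theorem~6.3]{KumanoGo} for the first assertion and remarks that continuity (boundedness of the linear map $p\mapsto p_\kappa$) is visible from that proof. Your sketch---oscillatory integral form, change of variables $z=\kappa(y)$, linearization of the phase via $M_{\kappa^{-1}}$, near-diagonal cutoff to guarantee uniform invertibility, smoothing remainder off the diagonal, and amplitude-to-left-symbol reduction---is precisely the content of the cited reference, so you are in full agreement with the paper's (outsourced) approach.
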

The first part follows e.g.\ from \cite[Chapter 2, Theorem~6.3]{KumanoGo}. Analyzing the proof one easily sees that the mapping is bounded.
\begin{note}
  The result and proof of Theorem~\ref{thm:change of variables smooth pdos 2} directly carries over to the case that $p\in S^m_{1,0}(\Rn\times \Rn;X)$ with $X=\mathcal{L}(X_0,X_1)$ as before.
\end{note}

\section{Coordinate Changes for Nonsmooth Pseudodifferential Operators}
\label{section:change of variables}

Now we generalize Theorem \ref{thm:change of variables smooth pdos 2} to the case of nonsmooth pseudodifferential operators. To this end we use bounded smooth diffeomorphisms as introduced in Definition \ref{def:bounded smooth diffeo}.

\begin{thm}
 \label{theorem1}
 Let $\kappa\colon \R^n\to \R^n$ be a bounded smooth diffeomorphism and let $p\in C^\tau S^m_{1,0}(\R^n\times \R^n;\mathcal{L}(X_0,X_1))$, where $m\in\R$, $\tau>0$, $\tau\notin\N$, and $X_0,X_1$ are Banach spaces. Then there is a $\widetilde{p}\in C^\tau S^m_{1,0}(\R^n\times \R^n;\mathcal{L}(X_0,X_1))$\index{$\widetilde{p}$} such that
\begin{equation}
\label{eq:change of variables}
 \widetilde{p}(x,D_x)f(x)=\kappa^{-1,*}\circ p(x,D_x)\circ\kappa^*f(x)\quad \text{ for all }x\in\R^n, f\in\mathcal{S}(\R^n;X_0),
\end{equation}
where $\kappa^*f:=f\circ\kappa$.
\end{thm}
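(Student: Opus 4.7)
The plan is to reduce the claim to the smooth case (Theorem~\ref{thm:change of variables smooth pdos 2}) by freezing the coefficients of $p$ in $x$, as announced in the introduction. For each fixed $z\in\R^n$ set $p_z(\xi):=p(z,\xi)$; regarded as a function on $\R^n\times\R^n$ that is constant in its first argument, $p_z$ lies in $S^m_{1,0}(\R^n\times\R^n;\mathcal{L}(X_0,X_1))$, with seminorms $|p_z|_i^{(m)}$ uniformly bounded in $z$ by $|p|_{C^\tau S^m_{1,0}}^i$ (since only $\xi$-derivatives matter for $p_z$). Applying Theorem~\ref{thm:change of variables smooth pdos 2} at each frozen $z$ produces $\widetilde{p}_z\in S^m_{1,0}$ with
\[
\widetilde{p}_z(x,D_x)=\kappa^{-1,*}\circ p_z(D_x)\circ\kappa^{*},
\]
and its continuity statement gives $|\widetilde{p}_z|_i^{(m)}\leq C_i\,|p|_{C^\tau S^m_{1,0}}^i$ uniformly in $z$.

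The structural point is that the whole nonsmooth coordinate change factors through two continuous linear maps. The freezing map $p\mapsto(z\mapsto p_z)$ sends $C^\tau S^m_{1,0}$ continuously into $C^\tau(\R^n;S^m_{1,0})$, which is just a rephrasing of the $C^\tau$-regularity in $x$ built into the $C^\tau S^m_{1,0}$-seminorms; composing with the smooth-coordinate-change map $q\mapsto\widetilde{q}$ of Theorem~\ref{thm:change of variables smooth pdos 2} yields $z\mapsto\widetilde{p}_z\in C^\tau(\R^n;S^m_{1,0})$, with seminorms controlled by those of $p$. I then set
\[
\widetilde{p}(x,\xi):=\widetilde{p}_{\kappa^{-1}(x)}(x,\xi)
\]
and verify $\widetilde{p}\in C^\tau S^m_{1,0}$. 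The $\xi$-estimates \eqref{estimate2} follow immediately from the uniform $S^m_{1,0}$-bounds on $\widetilde{p}_z$. For the H\"older-continuity in $x$ I would use the splitting
\[
\widetilde{p}(x,\xi)-\widetilde{p}(y,\xi)=\bigl[\widetilde{p}_{\kappa^{-1}(x)}(x,\xi)-\widetilde{p}_{\kappa^{-1}(x)}(y,\xi)\bigr]+\bigl[\widetilde{p}_{\kappa^{-1}(x)}(y,\xi)-\widetilde{p}_{\kappa^{-1}(y)}(y,\xi)\bigr],
\]
bounding the first bracket by the smooth $S^m_{1,0}$-structure of $\widetilde{p}_z$ in its first slot and the second by the $C^\tau$-regularity in $z$ together with $|\kappa^{-1}(x)-\kappa^{-1}(y)|\leq C\,|x-y|$. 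For $\tau>1$, $\tau\notin\N$, this is coupled with a chain-rule expansion of $\partial_x^\beta$, $|\beta|\leq[\tau]$, applied to the composition $x\mapsto\widetilde{p}_{\kappa^{-1}(x)}(x,\xi)$; each term produces a $z$-derivative of $\widetilde{p}_z$ evaluated at $\kappa^{-1}(x)$, whose regularity in $z$ drops as predicted by Remark~\ref{partial f Ctau-1} but matches exactly the remaining budget $\tau-|\beta|$.

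Finally, identity \eqref{eq:change of variables} is obtained by unwinding the definitions: for $f\in\mathcal{S}(\R^n;X_0)$,
\[
(\kappa^{-1,*}\circ p(x,D_x)\circ\kappa^{*}f)(x)=p(\kappa^{-1}(x),D_x)(\kappa^{*}f)(\kappa^{-1}(x))=\bigl[p_z(D_x)(\kappa^{*}f)(\kappa^{-1}(x))\bigr]_{z=\kappa^{-1}(x)},
\]
which by the smooth identity for $\widetilde{p}_z$ and the definition of $\widetilde{p}$ equals $[\widetilde{p}_z(x,D_x)f(x)]_{z=\kappa^{-1}(x)}=\widetilde{p}(x,D_x)f(x)$. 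The main obstacle I expect is the joint-regularity verification of $\widetilde{p}$, since it requires combining the limited H\"older regularity in the frozen parameter $z$ with the smooth symbol estimates in $x$ via the chain rule while keeping all derivatives uniformly controlled in $\xi$; the factorization of the map and the final operator identity are essentially bookkeeping once this regularity has been established.
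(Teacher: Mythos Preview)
Your proposal is correct and follows essentially the same freezing--unfreezing strategy as the paper: freeze $p$ in $x$ to get a $C^\tau$-family of smooth symbols, apply the smooth coordinate change of Theorem~\ref{thm:change of variables smooth pdos 2} fibrewise, then evaluate at $z=\kappa^{-1}(x)$. The paper organizes this as a composition of four continuous maps $\Phi_1,\dots,\Phi_4$ (freezing, substitution $z\mapsto\kappa^{-1}(z)$, smooth transformation, and evaluation $z=x$), whereas you merge the $\kappa^{-1}$-substitution with the final evaluation; your splitting argument and chain-rule induction for the $C^\tau$-regularity of $\widetilde{p}$ are exactly the content of the paper's Lemma~\ref{lemma:Psi} (Claims~\ref{Case tau in (0,1)} and~\ref{Case tau not in N}).
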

\begin{note}\label{rem:tauN} In the case $\tau\in \N$ the statement of the theorem holds true if one replaces $C^\tau$ by $C^{\tau-1,1}$, where $C^{k,1}(\Rn;F)$ consists of all functions $f\in C^k(\Rn;F)$ such that $\partial_x^\alpha f\colon \Rn\to F$ is globally Lipschitz continuous for all $|\alpha|=k$. The following proofs can be easily carried over to that case.
\end{note}
\begin{proof}{\it (Theorem~\ref{theorem1})}
 For the following let $X:=\mathcal{L}(X_0,X_1)$ and  be aware of the two different sets:
\begin{enumerate}
 \item $C^\tau S^m_{1,0}$, the set of functions $p\colon \R^n\times \R^n\to X$ from Definition \ref{def:Ctau symbols}.
 \item $C^\tau (\R^n;S^m_{1,0}(\R^n\times \R^n;X))$, that we will denote by $C^\tau (\R^n;S^m_{1,0})$, the set of H\"older continuous functions $f:\R^n\to S^m_{1,0}$, such that for every $i\in\N$, the value of the corresponding seminorm $\lVert\cdot\lVert^i_{C^\tau (\R^n;S^m_{1,0})}$ (defined in \eqref{normCtau}) applied to $f$ is finite.
\end{enumerate}

The idea of the proof is to see the operator defined by \eqref{eq:change of variables} as a composition of the following continuous maps:
\begin{alignat*}{2}
	\Phi_1&\colon C^\tau S^m_{1,0}\to C^\tau(\R^n;S^m_{1,0})&&\colon p\mapsto (z\mapsto q_z),\\
  	\Phi_2&\colon C^\tau(\R^n;S^m_{1,0})\to C^\tau(\R^n;S^m_{1,0})&&\colon (z\mapsto q_z)\mapsto (z\mapsto q_{\kappa^{-1}(z)})\\
	\Phi_3&\colon C^\tau(\R^n;S^m_{1,0})\to C^\tau(\R^n;S^m_{1,0})&&\colon (z\mapsto q_z)\mapsto (z\mapsto Tq_z)\\
        \Phi_4&\colon C^\tau(\R^n;S^m_{1,0})\to C^\tau S^m_{1,0}&&\colon (z\mapsto q_z)\mapsto q_{z}|_{z=x}        
\end{alignat*}
where $q_z(x,\xi):=p(z,\xi)$ for all $z,x,\xi\in \R^n$,
$$T\colon S^m_{1,0}(\R^n\times\R^n;\mathcal{L}(X_0,X_1))\to S^m_{1,0}(\R^n\times\R^n;\mathcal{L}(X_0,X_1))\colon p\mapsto p_\kappa$$ 
represents the change of coordinates map as in Theorem~\ref{thm:change of variables smooth pdos 2}, $p_\kappa$ represents the symbol of the operator defined in that theorem, and $q_z|_{z=x}(x,\xi)= q_x(x,\xi)$ for all $x,\xi\in\R^n$.

The map $\Phi_1$ corresponds to a ``freezing of coefficients'' since it takes a nonsmooth $p$ and maps it to a family of smooth symbols $q_z=p(z,\cdot)$ depending on the spatial variable $z$. Moreover, $\Phi_2$ treats the coordinate transformation only in the ``frozen'' spatial variable $z\in\Rn$ and  $\Phi_3$ describes the coordinate change for the smooth symbol $q_z$ for each $z$. Finally, $\Phi_4$ corresponds to ``unfreezing'' the coefficients by evaluating ``$z=x$''.

Next we will prove that these maps are well--defined and continuous. Then the symbol of the operator given by \eqref{eq:change of variables} can be written as
\[
 \widetilde{p}(x,\xi)=\Phi_4(\Phi_3(\Phi_2(\Phi_1(p))))(x,\xi),
\]
for all $(x,\xi)\in \R^n\times \R^n$ and we conclude the statement of the theorem.

The continuity of $\Phi_2$ and $\Phi_3$ are obvious since these mappings act only with respect to $z\in \R^n$, $q_z\in S^m_{1,0}$, respectively, and the latter actions are well defined and continuous due to Theorem~\ref{thm:change of variables smooth pdos 2} because $\kappa$ is a bounded smooth diffeomorphism. 

\begin{lemma}
\label{lemma:Phi1}
Let $m\in\R$, $\tau>0$, $\tau\not\in \N$. Then the map 
 \begin{align*}
  \Phi_1:C^\tau S^m_{1,0}&\to C^\tau(\R^n;S^m_{1,0})\colon p\mapsto q,
 \end{align*}
 where $q_z(x,\xi):=p(z,\xi)$ for all $z,x,\xi\in \R^n$, is well--defined and continuous.
\end{lemma}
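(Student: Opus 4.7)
The plan is to exploit the fact that the symbols $q_z$ are independent of the spatial variable $x$, so their $S^m_{1,0}$-seminorms collapse to estimates only on $\xi$-derivatives evaluated at the fixed parameter $z$. Concretely, since $\partial_x^\beta q_z \equiv 0$ for $|\beta|>0$, one has
\[
 |q_z|_i^{(m)} = \max_{|\alpha|\le i}\sup_{\xi\in\R^n}\|\partial_\xi^\alpha p(z,\xi)\|_X\langle\xi\rangle^{-m+|\alpha|}.
\]
This identity is the workhorse of the proof: it reduces every quantity about $q_z$ to a quantity about $p(z,\xi)$, which is already controlled by the defining seminorms of $C^\tau S^m_{1,0}$.

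First, I would verify that $z\mapsto q_z$ is measurable and bounded into $S^m_{1,0}$. For each multi-index $\gamma$ with $|\gamma|\le[\tau]$, I differentiate $q_z$ in $z$ (which commutes with $\partial_\xi$ by smoothness in $\xi$ and boundedness in $z$), note that $\partial_z^\gamma q_z(x,\xi)=\partial_z^\gamma p(z,\xi)$ is again independent of $x$, and apply the estimate \eqref{estimate2} to obtain
\[
 \sup_{z\in\R^n}|\partial_z^\gamma q_z|_i^{(m)} \le \max_{|\alpha|\le i}\sup_{\xi}\|\partial_\xi^\alpha p(\cdot,\xi)\|_{L^\infty_z}^{i'}\langle\xi\rangle^{-m+|\alpha|} \le |p|^{i'}_{C^\tau S^m_{1,0}}
\]
for a suitable $i'$ depending on $i$ and $[\tau]$. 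This handles the $L^\infty$-part of the $C^\tau(\R^n;S^m_{1,0})$-seminorm.

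For the Hölder part, I fix $|\gamma|=[\tau]$ and estimate, for $z\neq y$,
\[
 \frac{|\partial_z^\gamma q_z-\partial_z^\gamma q_y|_i^{(m)}}{|z-y|^{\tau-[\tau]}} = \max_{|\alpha|\le i}\sup_{\xi}\frac{\|\partial_\xi^\alpha\partial_z^\gamma p(z,\xi)-\partial_\xi^\alpha\partial_z^\gamma p(y,\xi)\|_X}{|z-y|^{\tau-[\tau]}}\langle\xi\rangle^{-m+|\alpha|}.
\]
Each inner ratio is bounded by the Hölder seminorm of $\partial_\xi^\alpha p(\cdot,\xi)$ in $C^\tau(\R^n;X)$, and inequality \eqref{estimate2} then gives a bound of the form $C|p|^{i'}_{C^\tau S^m_{1,0}}$ uniformly in $\xi$, since the factor $\langle\xi\rangle^{m-|\alpha|}$ cancels. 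Adding the two contributions yields
\[
 \|\Phi_1(p)\|^i_{C^\tau(\R^n;S^m_{1,0})} \le C\, |p|^{i'}_{C^\tau S^m_{1,0}},
\]
and since $\Phi_1$ is linear, this seminorm estimate is exactly the required continuity.

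I do not expect any essential obstacle here; the lemma is fundamentally a bookkeeping statement that $C^\tau S^m_{1,0}$ and $C^\tau(\R^n;S^m_{1,0})$ agree on $x$-independent symbols. The only point to be careful about is the correspondence between the seminorm indices on the two sides—specifically tracking that the $i$-th seminorm of $\Phi_1(p)$ can be controlled by an $i'$-th seminorm of $p$ with $i'$ a fixed function of $i$ and $[\tau]$—and the standard interchange of $\partial_z$ and $\partial_\xi$, which is justified by the joint smoothness/Hölder regularity built into Definition~\ref{def:Ctau symbols}.
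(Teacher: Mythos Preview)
Your proposal is correct and follows essentially the same route as the paper: both arguments use that $q_z$ is constant in $x$ to collapse $|q_z|_i^{(m)}$ to a $\xi$-only expression in $p(z,\xi)$, then verify the $L^\infty$ and H\"older parts of the $C^\tau(\R^n;S^m_{1,0})$-seminorm separately from the defining estimate~\eqref{estimate2}. The only minor difference is that the paper tracks the seminorm indices precisely and obtains $\|q\|^i_{C^\tau(\R^n;S^m_{1,0})}\le C\,|p|^i_{C^\tau S^m_{1,0}}$ with the \emph{same} index $i$ on both sides, whereas you allow an $i'=i'(i,[\tau])$; this caution is harmless but unnecessary here.
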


\begin{proof}
 For $z\in \R^n$ fixed, the function $q_{z}$ belongs to $S^m_{1,0}$. In fact, $q_{z}$ has constant coefficients and therefore it is smooth with respect to $x$. Since $p\in C^\tau S^m_{1,0}$, for any pair of multiindices $\alpha,\beta\in\N^{n}$ there exists a constant $C_{\alpha,\beta}$ such that
 \[
  \left\|\partial_{\xi}^\alpha\partial_x^\beta q_{z}(x,\xi)\right\|_X=\left\|\partial_{\xi}^\alpha\partial_x^\beta \left(p(z,\xi)\right)\right\|_X\leq C_{\alpha,\beta}\langle\xi\rangle^{m-|\alpha|}
 \]
 which implies that $q_{z}\in S^m_{1,0}$ for all $z\in \R^n$.\\

By definition of $q_z$, if $p\in C^\tau$ with respect to $x$, then $q\in C^\tau$ with respect to $z$. Indeed, for all $i\in\N$ and for all $\delta\in\N^{n}$ such that $|\delta|\leq[\tau]$
\begin{align}
 \underset{z\in \R^n}{\sup}\left|\partial_z^\delta q_z\right|^{(m)}_i
 &=\underset{z\in \R^n}{\sup}\ \underset{|\alpha|,|\beta|\leq i}{\max}\ \underset{(x,\xi)\in \R^n\times \R^n}{\sup}\left\|\partial_\xi^\alpha\partial_x^\beta\partial_z^\delta q_z(x,\xi)\right\|_X\langle\xi\rangle^{-m+|\alpha|} \notag \\
 &=\underset{z\in \R^n}{\sup}\ \underset{|\alpha|\leq i}{\max}\ \underset{\xi\in \R^n}{\sup}\left\|\partial_\xi^\alpha\partial_z^\delta \left(p(z,\xi)\right)\right\|_X\langle\xi\rangle^{-m+|\alpha|} \leq C\,|p|_{C^\tau S^m_{1,0}}^{i}. \label{norm qz1}
\end{align}
Moreover for $z^0,z^1\in \R^n$ and for all $\delta\in\N^{n}$ such that $|\delta|=[\tau]$,
\begin{align}
 &\left|\partial_z^\delta q_{z^0}-\partial_z^\delta q_{z^1}\right|_i^{(m)} \notag\\
 &\ =\underset{|\alpha|,|\beta|\leq i}{\max}\ \underset{(x,\xi)\in \R^n\times \R^n}{\sup}\left\|\partial_\xi^\alpha\partial_x^\beta\partial_z^\delta{q_{z^0}}(x,\xi)-\partial_\xi^\alpha\partial_x^\beta\partial_z^\delta{q_{z^1}}(x,\xi)\right\|_X\langle\xi\rangle^{-m+|\alpha|} \notag \\
 &\ =\underset{|\alpha|\leq i}{\max}\ \underset{\xi\in \R^n}{\sup}\left\|\partial_\xi^\alpha\partial_z^\delta \left({p}(z^0,\xi)\right)-\partial_\xi^\alpha\partial_z^\delta \left(p(z^1,\xi)\right)\right\|_X\langle\xi\rangle^{-m+|\alpha|} \notag \\
&\ \leq C\ |z^0-z^1|^{\tau-[\tau]}\,|p|_{C^\tau S^m_{1,0}}^{i}. \notag 
\end{align}
Hence, for all $i\in\N$ and for all $\delta\in\N^{n}$ such that $|\delta|=[\tau]$
\begin{equation}
 \label{norm qz2}
 \underset{\substack{z^0, z^1\in \R^n \\ z^0\not=z^1}}{\sup}\dfrac{\left|\partial_z^\delta q_{z^0}-\partial_z^\delta q_{z^1}\right|_i^{(m)}}{|z^0-z^1|^{\tau-[\tau]}}\leq C\ |p|_{C^\tau S^m_{1,0}}^{i}.
\end{equation}
Both \eqref{norm qz1} and \eqref{norm qz2} imply that for all $i\in\N$ there exists a constant $C$ such that 
\[
 \|q\|_{C^\tau(\R^n;S^m_{1,0})}^{i}\leq C\ |p|_{C^\tau S^m_{1,0}}^{i}.
\]
\end{proof}

\begin{lemma}
\label{lemma:Psi}
 The map $\Phi_4\colon C^\tau(\R^n;S^m_{1,0})\to C^\tau S^m_{1,0}$ is well--defined and continuous.
\end{lemma}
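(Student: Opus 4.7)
My plan is to show that $p(x,\xi) := q_x(x,\xi)$, understood as the restriction of $q(z,x',\xi)$ to the diagonal $z = x' = x$, defines an element of $C^\tau S^m_{1,0}$ satisfying an estimate of the form $|p|^i_{C^\tau S^m_{1,0}} \leq C\,\|q\|^{i'}_{C^\tau(\R^n;S^m_{1,0})}$ for some $i' = i'(i,[\tau])$, which simultaneously gives well-definedness and continuity. The crucial observation is that differentiation in $\xi$ does not interact with the spatial variables, so $\partial_\xi^\alpha p(x,\xi) = (\partial_\xi^\alpha q)(z,x',\xi)\big|_{z=x'=x}$, and hence for each fixed $\xi$ and $\alpha$ the problem reduces to controlling the diagonal trace of a function of $(z,x')$ with Hölder regularity in $z$ and smoothness in $x'$.

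I would formulate and prove the following diagonal-trace sublemma: if $F\in C^\tau(\R^n_z;C^{[\tau]+1}_b(\R^n_{x'};X))$, then $h(x) := F(x,x)$ belongs to $C^\tau(\R^n;X)$ with a norm bound in terms of the Hölder norm of $F$. The argument uses the chain rule
\[
\partial_x^\beta h(x) = \sum_{\beta_1+\beta_2=\beta}\binom{\beta}{\beta_1}(\partial_z^{\beta_1}\partial_{x'}^{\beta_2}F)(x,x),\qquad |\beta|\leq[\tau],
\]
and uniform bounds on the mixed derivatives of $F$ for the sup-norm part of the $C^\tau$-seminorm. For the Hölder part at $|\beta|=[\tau]$, I would split each difference as
\[
(\partial_z^{\beta_1}\partial_{x'}^{\beta_2}F)(x,x)-(\partial_z^{\beta_1}\partial_{x'}^{\beta_2}F)(y,y) = A + B,
\]
with $A := (\partial_z^{\beta_1}\partial_{x'}^{\beta_2}F)(x,x)-(\partial_z^{\beta_1}\partial_{x'}^{\beta_2}F)(y,x)$ and $B$ the analogous difference in the second argument. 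The term $A$ is controlled by the $C^\tau$-regularity in $z$: if $|\beta_1|=[\tau]$, the Hölder seminorm gives $|A|\leq C|x-y|^{\tau-[\tau]}$ directly; if $|\beta_1|<[\tau]$, one more bounded $z$-derivative is available, so the mean value theorem gives $|A|\leq C|x-y|$, which dominates $|x-y|^{\tau-[\tau]}$ on $|x-y|\leq 1$. The term $B$ is handled by the mean value theorem in $x'$ using one more bounded $x'$-derivative, which is the reason for the index $[\tau]+1$. For $|x-y|\geq 1$, I appeal to Remark \ref{rk:norm Ctau x-ygeq1} to reduce to the supremum bound.

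I would then apply the sublemma to $F_\xi(z,x') := \partial_\xi^\alpha q(z,x',\xi)$ for each fixed $\xi$ and $|\alpha|\leq i$, using that the assumption $q\in C^\tau(\R^n;S^m_{1,0})$ provides the uniform estimate
\[
\|F_\xi\|_{C^\tau(\R^n_z;C^{[\tau]+1}_b(\R^n_{x'};X))} \leq C\,\langle\xi\rangle^{m-|\alpha|}\,\|q\|^{i+[\tau]+1}_{C^\tau(\R^n;S^m_{1,0})}.
\]
Weighting by $\langle\xi\rangle^{-m+|\alpha|}$, taking the supremum over $\xi$ and the maximum over $|\alpha|\leq i$, yields the seminorm estimate for $\Phi_4(q)=p$, proving both that $p\in C^\tau S^m_{1,0}$ and that $\Phi_4$ is continuous.

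I expect the main obstacle to be the bookkeeping in the Leibniz/case-split argument for the top-order Hölder estimate, especially the observation that mixed-order terms with $|\beta_1|<[\tau]$ have enough leftover $z$-regularity for a Lipschitz bound that then dominates the required $|x-y|^{\tau-[\tau]}$ factor on the scale $|x-y|\leq 1$. The underlying principle, that restriction to the diagonal preserves the Hölder class $C^\tau$ with a quantitative loss of one $x'$-derivative, is classical, but keeping careful track of the dependence of the final seminorm index $i'$ on $i$ and $[\tau]$ requires some care.
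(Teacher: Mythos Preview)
Your proposal is correct and uses the same fundamental ingredients as the paper: splitting the diagonal difference into a $z$-variation (controlled by the $C^\tau$-regularity, with a Lipschitz bound when $|\beta_1|<[\tau]$) and an $x'$-variation (controlled by the mean value theorem at the cost of one extra $x'$-derivative, whence the index shift $i\mapsto i+[\tau]+1$), together with Remark~\ref{rk:norm Ctau x-ygeq1} to reduce to $|x-y|<1$. The organizational difference is that the paper proceeds by induction on $[\tau]$: it first treats $\tau\in(0,1)$ directly (your sublemma in the base case), and for larger $\tau$ writes $\partial_{x_j}\widetilde q(x,\xi)=(\partial_{z_j}q_z)|_{z=x}+(\partial_{x_j}q_z)|_{z=x}$ and applies the induction hypothesis to each summand, peeling off one derivative at a time rather than expanding the full Leibniz sum $\sum_{\beta_1+\beta_2=\beta}$ at once. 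Your direct Leibniz/case-split argument is a bit more transparent and yields the same loss of derivatives; the paper's inductive organization trades that transparency for shorter individual steps and avoids the explicit case distinction on $|\beta_1|$ in the top-order H\"older estimate.
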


\begin{proof}
First of all, let $\widetilde{q}(x,\xi):= q_z(x,\xi)|_{z=x}$. Then  we have
\begin{align}\nonumber
 &\underset{|\alpha|\leq i}{\max}\ \underset{\xi\in \R^n}{\sup}\ \left\|\partial_\xi^\alpha\widetilde{q}(\cdot,\xi)\right\|_{L^\infty(\R^n;X)}\langle\xi\rangle^{-m+|\alpha|} \\\nonumber
 &\ \leq\underset{|\alpha|\leq i}{\max}\ \underset{(x,\xi)\in \R^n\times \R^n}{\sup}\ \left\|\left.\partial_\xi^\alpha q_z(x,\xi)\right|_{z=x}\right\|_X\langle\xi\rangle^{-m+|\alpha|}\\\nonumber
 &\ \leq C\,\underset{z\in \R^n}{\sup}\ \underset{|\alpha|\leq i}{\max}\underset{(x,\xi)\in \R^n\times \R^n}{\sup}\left\|\partial_\xi^\alpha\partial_x^\beta q_z(x,\xi)\right\|_X\langle\xi\rangle^{-m+|\alpha|}\\\label{estimate norm infty tilde p}
 & \ =C\,\left\|q_\bullet\right\|^i_{L^\infty(\R^n;S^m_{1,0})}.
\end{align}

\begin{claim}
\label{Case tau in (0,1)}
 Let $q\in C^\tau(\R^n;S^m_{1,0})$ for $\tau\in(0,1)$. Then $\widetilde{q}\in C^\tau S^m_{1,0}$.
\end{claim}

\begin{proof}
Let $\alpha\in\N^{n}$ be any multiindex. By the triangle inequality we have
\begin{align}
 &\left\|\partial_{\xi}^\alpha\widetilde{q}(x^0,\xi)-\partial_{\xi}^\alpha\widetilde{q}(x^1,\xi)\right\|_X\langle\xi\rangle^{-m+|\alpha|}\notag \\
 &\leq \left\|\partial_{\xi}^\alpha{q_{x^0}}(x^0,\xi)-\partial_{\xi}^\alpha{q_{x^1}}(x^0,\xi)\right\|_X\langle\xi\rangle^{-m+|\alpha|}+ \notag \\
 &\quad +\left\|\partial_{\xi}^\alpha{q_{x^1}}(x^0,\xi)-\partial_{\xi}^\alpha{q_{x^1}}(x^1,\xi)\right\|_X\langle\xi\rangle^{-m+|\alpha|}. \label{triangleinequality2}
\end{align}
Applying the mean value theorem in several variables for all $x^0,x^1\in \R^n$ such that $|x^0-x^1|<1$ we obtain 
\begin{align}\nonumber
 &\left\|\partial_{\xi}^\alpha{q_{x^1}}(x^0,\xi)-\partial_{\xi}^\alpha{q_{x^1}}(x^1,\xi)\right\|_X\langle\xi\rangle^{-m+|\alpha|} \\\nonumber
 &\ \leq \underset{0\leq c\leq1}{\sup}\left\|\nabla_x\partial_{\xi}^\alpha{q_{x^1}}((1-c)x^0+cx^1,\xi)\right\|_X\langle\xi\rangle^{-m+|\alpha|}|x^0-x^1| \\
 &\ \leq \left|{q_{x^1}}\right|_{|\alpha|+1}^{(m)}|x^0-x^1| \leq \left\|q\right\|^{|\alpha|+1}_{C^\tau(\R^n;S^m_{1,0})}|x^0-x^1|^\tau,\label{Second term rhs}
\end{align}
where $\nabla_x f$ represents the gradient of the function $f$ with respect to $x$.\\

By definition of the seminorm $|\cdot|_{|\alpha|}^{(m)}$
there exists $C\in\R$ such that 
\begin{align}
 \left\|\partial_{\xi}^\alpha{q_{x^0}}(x^0,\xi)-\partial_{\xi}^\alpha{q_{x^1}}(x^0,\xi)\right\|_X\langle\xi\rangle^{-m+|\alpha|}
 &\leq \left|q_{x^0}-q_{x^1}\right|^{(m)}_{|\alpha|} \notag \\
 & \leq C\left\|q\right\|^{|\alpha|}_{C^\tau(\R^n;S^m_{1,0})}|x^0-x^1|^\tau. \label{First term rhs}
\end{align}
An application of \eqref{Second term rhs} and \eqref{First term rhs} in \eqref {triangleinequality2} gives us the existence of a constant $C$ such that
\begin{equation*}
 \left\|\partial_{\xi}^\alpha\widetilde{q}(x^0,\xi)-\partial_{\xi}^\alpha\widetilde{q}(x^1,\xi)\right\|_X\langle\xi\rangle^{-m+|\alpha|}\leq C\left\|q\right\|^{|\alpha|+1}_{C^\tau(\R^n;S^m_{1,0})}|x^0-x^1|^\tau
\end{equation*}
for all $x^0,x^1\in \R^n$ such that $|x^0-x^1|<1$. \\ 
Therefore by Remark \ref{rk:norm Ctau x-ygeq1} there exists a constant $C$ such that
\[
 \underset{x^0\not=x^1}{\sup}\dfrac{\left\|\partial_{\xi}^\alpha\widetilde{q}(x^0,\xi)-\partial_{\xi}^\alpha\widetilde{q}(x^1,\xi)\right\|_X}{|x^0-x^1|^{\tau}}\langle\xi\rangle^{-m+|\alpha|}\leq C\left\|q\right\|^{|\alpha|+1}_{C^\tau(\R^n;S^m_{1,0})}.
\]
Thus, by \eqref{estimate norm infty tilde p} for all $\alpha\in\N^{n}$, there exists a constant $C$ such that for all $\xi\in \R^n$
\[
 \left\lVert \partial_{\xi}^\alpha\widetilde{q}(\cdot,\xi)\right\lVert_{C^\tau(\R^n;X)}\langle\xi\rangle^{-m+|\alpha|}\leq C\left\|q\right\|^{|\alpha|+1}_{C^\tau(\R^n;S^m_{1,0})}.
\]
Hence $\widetilde{q}\in C^\tau S^m_{1,0}$. Moreover, this shows that for all $i\in\N$ there exists some constant $C_i\in\R$ such that
\[
 \left\|\widetilde{q}\right\|^{i}_{C^\tau S^m_{1,0}}\leq C_i\left\|q\right\|^{i+1}_{C^\tau(\R^n;S^m_{1,0})}.
\]
\end{proof}

\begin{claim}
\label{Case tau not in N}
 Let $q\in C^\tau(\R^n;S^m_{1,0})$ for $\tau>0$, $\tau\notin\N$. Then $\widetilde{q}\in C^\tau S^m_{1,0}$.
\end{claim}

\begin{proof}
Let $\alpha\in\N^{n}$. By \eqref{estimate norm infty tilde p},  we already have the estimate $\left\lVert\partial_{\xi}^\alpha\partial_{x}^\beta\widetilde{q}(\cdot,\xi)\right\lVert_{L^\infty(\R^n;X)}\le C_{\alpha,\beta}\langle\xi\rangle^{m-|\alpha|}$ in the case $\beta=0$. Therefore we need to prove that $\partial_{\xi}^\alpha\widetilde{q}$ belongs to $C^{[\tau]}(\R^n;X)$ with respect to $x$, that for all $\beta\in\N^{n}$ with $|\beta|=[\tau]$, $\partial_{\xi}^\alpha\partial_{x}^\beta\widetilde{q}$ belongs to $C^{\tau-[\tau]}(\R^n;X)$ with respect to $x$, and then conclude that $\widetilde{q}$ satisfies estimates as in \eqref{estimate2}. This can be proved by mathematical induction with respect to $[\tau]\in \N$.\\ 

The case $[\tau]=0$ was proved in Claim~\ref{Case tau in (0,1)} before. So, let $l\in \N$, and assume that for every $\tau \notin \N$ with $0\leq [\tau]\leq l$ we have that ${q}\in C^\tau(\R^n;S^m_{1,0})$ implies that $\widetilde{q}\in C^\tau S^m_{1,0}$. Let $\tau\notin\N$ be such that $[\tau]=l+1$.\\

\noindent Since ${q}\in C^\tau(\R^n;S^m_{1,0})$, for all $\gamma\in\N^{n}$ with $1\leq|\gamma|\leq[\tau]=l+1$, $\partial_z^\gamma{q}\in C^{\tau-|\gamma|}(\R^n;S^m_{1,0})$, and then by induction hypothesis $\partial_z^\gamma{q}_z(x,\xi)|_{z=x}\in C^{\tau-|\gamma|} S^m_{1,0}$. \\

\noindent For all $\beta\in\N^{n}$, $1\leq|\beta|\leq[\tau]$, there exist $j\in\{1,\ldots,n\}$ and $\delta\in\N^{n}$ such that $\partial_{x}^\beta=\partial_{x}^\delta\partial_{x_j}$. Then by the chain rule we have
\begin{equation}
 \partial_{x}^\beta\widetilde{q}(x,\xi)=\partial_{x}^\delta\partial_{x_j}\widetilde{q}(x,\xi)
 =\partial_{x}^\delta\left(\left.\partial_{z_j} q_{x}(x,\xi)\right|_{z=x}\right)+\partial_{x}^\delta\left(\left.\partial_{x_j}{q_{z}}(x,\xi)\right|_{z=x}\right) \label{eq:derivative tilde p2}.
\end{equation}

\noindent So $\widetilde{q}(x,\xi)$ is also differentiable with respect to $x$ up to order $[\tau]$. By induction hypothesis, the facts that $[\tau-1]\leq l$ and $\partial_{z_j}{q}\in C^{\tau-1}(\R^n;S^m_{1,0})$, imply that $\left.\partial_{z_j}q_z(x,\xi)\right|_{z=x}\in C^{\tau-1}S^m_{1,0}$. Also the map $x\mapsto\left.\partial_{x_j}{q_{z}}(x,\xi)\right|_{z=x}$ lies in $C^{\tau}_x$. Since $C^{\tau}_x \hookrightarrow C^{\tau-1}_x$ and $\left.\partial_{x_j}{q_{z}}(x,\xi)\right|_{z=x}\in C^{\tau-1}S^m_{1,0}$, from \eqref{eq:derivative tilde p2} we get that $\partial_{x}^\beta\widetilde{q}\in C^{\tau-1-|\delta|} S^m_{1,0}(\R^n\times \R^n;X)=C^{\tau-|\beta|} S^m_{1,0}(\R^n\times \R^n;X)$. Therefore $\widetilde{q}$ is in $C^\tau$ with respect to $x$.\\

\noindent By induction hypothesis
 we can prove from \eqref{eq:derivative tilde p2} that for any $\alpha,\beta\in\N^{n}$, $|\beta|\leq l+1$, there exist a constant $C_{\alpha,\beta}$ and $i'\in\N$ such that 
\begin{equation*}
 \left\|\partial_{\xi}^\alpha\partial_{x}^\beta\widetilde{q}(\cdot,\xi)\right\|_{L^\infty(\R^n;X)}\langle\xi\rangle^{-m+|\alpha|}\leq C_{\alpha,\beta}\left\|q\right\|^{i'}_{C^\tau(\R^n;S^m_{1,0})},
\end{equation*}
and that for any $\beta\in\N^{n}$ with $|\beta|=[\tau]=l+1$
\begin{align}
 &\underset{x^0\not=x^1}{\sup}\dfrac{\left\|\partial_{\xi}^\alpha\partial_x^\beta\widetilde{q}(x^0,\xi)-\partial_{\xi}^\alpha\partial_x^\beta\widetilde{q}(x^1,\xi)\right\|_X}{|x^0-x^1|^{\tau-1}}\langle\xi\rangle^{-m+|\alpha|} \notag \\
 &\leq 
 \underset{x^0\not=x^1}{\sup}\dfrac{\left\|\partial_{\xi}^\alpha\partial_x^\delta\left(\left.\partial_{z_j}{q_{z}}(x^0,\xi)\right|_{z=x^0}\right) -\partial_{\xi}^\alpha\partial_x^\delta\left(\left.\partial_{z_j}{q_{z}}(x^1,\xi)\right|_{z=x^1}\right) 
 \right\|_X}{|x^0-x^1|^{\tau-1}}\langle\xi\rangle^{-m+|\alpha|} \notag \\
 &\quad + \underset{x^0\not=x^1}{\sup}\dfrac{\left\|\partial_{\xi}^\alpha\partial_x^\delta\left(\left.\partial_{x_j}{q_{z}}(x^0,\xi)\right|_{z=x^0}\right) -\partial_{\xi}^\alpha\partial_x^\delta\left(\left.\partial_{x_j}{q_{z}}(x^1,\xi)\right|_{z=x^1}\right) 
 \right\|_X}{|x^0-x^1|^{\tau-1}}\langle\xi\rangle^{-m+|\alpha|} \notag \\
 &\leq \left\| \left.\partial_{\xi}^\alpha\partial_{z_j}q_z(\cdot,\xi)\right|_{z=\cdot}\right\|_{C^{\tau-1}(\R^n;X)} \langle\xi\rangle^{-m+|\alpha|} 
 +\left\|\left.\partial_{\xi}^\alpha\partial_{x_j}q_z(\cdot,\xi)\right|_{z=\cdot}\right\|_{C^{\tau-1}(\R^n;X)}\langle\xi\rangle^{-m+|\alpha|} \notag \\
 &\leq C\left\|q\right\|^{i'}_{C^\tau(\R^n;S^m_{1,0})}. \notag
\end{align}
since for all $|\beta|\leq l+1$, $\partial_z^\beta q\in C^{\tau-|\beta|}_z (\R^n;S^m_{1,0})$.
\end{proof}
The previous argument also shows that for all $i\in\N$, there exist $C\in\R$, $i'\in\N$ such that
\begin{equation*}
 \left\| \widetilde{q}\right\|_{C^{\tau} S^m_{1,0}}^{i} \leq C \left\| q\right\|_{C^{\tau}(\R^n;S^m_{1,0})}^{i'}.
\end{equation*}
Thus we get that the map $\Phi_4$ is well--defined and continuous.
\end{proof}
Altogether we have finished the proof of Theorem~\ref{theorem1}.
\end{proof}

\section{Localizations of Nonsmooth Pseudodifferential Operators}
\label{section:Truncation of nonsmooth pdos}

For the following let $X:= \mathcal{L}(X_0,X_1)$ for two Banach spaces $X_0,X_1$ and $p\in C^\tau S^m_{1,0}\equiv C^\tau S^m_{1,0}(\R^n\times\R^n;X)$ for some $m\in\R$ and $\tau>0$.

The definition of pseudodifferential operators on manifolds involves operators with Schwartz kernels vanishing on the diagonal that appear as remainders, when a pseudodifferential operator is localized. These remainders are compositions of a pseudodifferential operator with appropriate multiplication operators by smooth functions having disjoint supports. In this section we show that any of such remainders of localizations of nonsmooth pseudodifferential operators has a representation by a so-called $C^\tau - C^\infty$--kernel:
\begin{defin}
\label{def:Ctau Cinfty kernel PDOs}
Let $p(x,D_x)$ be a pseudodifferential operator with a symbol  $p \in C^\tau S^m_{1,0}(\Rn\times \Rn;X)$ for some $m\in\R$, $\tau>0$. A function $K:\R^n\times\R^n\to X$ such that for all $f\in\mathcal{S}(\R^n;X_0)$,
 \begin{equation}
\label{eq:Ctau Cinfty kernel p(x,Dx)}
  p(x,D_x)f(x)=\int_{{\R}^n}K(x,x-y)\,f(y)\,dy\qquad \text{for all }x\in\R^n
 \end{equation}
is called a $C^\tau - C^\infty$--kernel of $p(x,D_x)$ if it is in $C^\tau$ with respect to the first variable and it is in $C^\infty$ with respect to the second variable such that for all $\alpha,\beta\in\N^n$, there is some constant $C_{\alpha,\beta}>0$ such that
 \[
  \sup_{z\in{{\R}^n}}\left\|z^\beta\partial_z^\alpha K(\cdot,z)\right\|_{C^\tau({\R}^n;X)}\leq C_{\alpha,\beta}.
 \]
\end{defin}

\begin{lemma}
\label{lem:pdo order -infty has CtauCinfty kernel}
Let $p \in C^\tau S^m_{1,0}(\Rn\times \Rn;X)$ for some $m\in\R$, $\tau>0$. Then $p(x,D_x)$ is of order $-\infty$ (i.e., $p\in C^\tau S^{-\infty}_{1,0}$) if and only if $p(x,D_x)$ has a $C^\tau - C^\infty$--kernel.
\end{lemma}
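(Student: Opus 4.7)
The proof splits into two directions, both hinging on the identity $K(x,z)=\int_{\R^n}e^{iz\cdot\xi}p(x,\xi)\,\dbar\xi$ (Fourier inversion in the second argument) relating the kernel to the symbol. This is the natural candidate: starting from $p(x,D_x)f(x)=\int e^{ix\cdot\xi}p(x,\xi)\hat{f}(\xi)\,\dbar\xi$ and interchanging integrals gives exactly $p(x,D_x)f(x)=\int K(x,x-y)f(y)\,dy$ with this $K$.

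$(\Rightarrow)$ Assume $p\in C^\tau S^{-\infty}_{1,0}$. Then $p(x,\cdot)$ lies in $\mathcal{S}(\Rn;X)$ uniformly in $x$ (in $C^\tau_x$-norm), so $K(x,z):=\int e^{iz\cdot\xi}p(x,\xi)\,\dbar\xi$ converges absolutely. Using $z^\beta e^{iz\cdot\xi}=(-i)^{|\beta|}\partial_\xi^\beta e^{iz\cdot\xi}$ and integrating by parts yields
\begin{equation*}
z^\beta\partial_z^\alpha K(x,z)=(-1)^{|\beta|}i^{|\alpha|-|\beta|}\int_{\R^n}e^{iz\cdot\xi}\,\partial_\xi^\beta\bigl[\xi^\alpha p(x,\xi)\bigr]\,\dbar\xi.
\end{equation*}
Expanding the $\xi$-derivative by Leibniz produces a sum of terms of the form $\xi^{\alpha-\gamma}\,\partial_\xi^{\beta-\gamma'}p(x,\xi)$. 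Each such term is $\|\cdot\|_{C^\tau_x}$-bounded by $C\langle\xi\rangle^{-N}$ for every $N$, since $p\in C^\tau S^{-\infty}_{1,0}$. Passing the $C^\tau$-norm in $x$ inside the absolutely convergent $\xi$-integral (which is valid by the triangle inequality applied to the defining seminorm in \eqref{normCtau}) gives $\|z^\beta\partial_z^\alpha K(\cdot,z)\|_{C^\tau}\le C_{\alpha,\beta,N}\int\langle\xi\rangle^{-N+|\alpha|}\,\dbar\xi$, finite for $N$ large.

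$(\Leftarrow)$ Conversely, assume a $C^\tau$-$C^\infty$-kernel $K$ exists. The bound $\sup_z\|z^\beta\partial_z^\alpha K(\cdot,z)\|_{C^\tau}\le C_{\alpha,\beta}$ implies in particular that $K(x,\cdot)\in\mathcal{S}(\Rn;X)$ with seminorms controlled uniformly in $x$. Comparing the kernel representation \eqref{eq:Ctau Cinfty kernel p(x,Dx)} with the oscillatory integral \eqref{eq:oscillatory integral}, one sees that for each fixed $x$, the tempered distribution $k_x\in\mathcal{S}'(\Rn)$ defined by $k_x(f):=p(x,D_x)f(x)$ coincides with the function $y\mapsto K(x,x-y)$; by Fourier uniqueness this forces $p(x,\xi)=\int_{\R^n}e^{-iz\cdot\xi}K(x,z)\,dz$. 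Now integration by parts against $(1-\Delta_z)^N e^{-iz\cdot\xi}=\langle\xi\rangle^{2N}e^{-iz\cdot\xi}$ gives
\begin{equation*}
\partial_\xi^\alpha p(x,\xi)=\langle\xi\rangle^{-2N}\int_{\R^n}e^{-iz\cdot\xi}\,(1-\Delta_z)^N\bigl[(-iz)^\alpha K(x,z)\bigr]\,dz.
\end{equation*}
Expanding the bracket by Leibniz and using the decay estimate on $K$ in the $C^\tau_x$-norm for any power of $(1+|z|)^{-1}$ one wishes, the $z$-integral converges and $\|\partial_\xi^\alpha p(\cdot,\xi)\|_{C^\tau}\le C_{\alpha,N}\langle\xi\rangle^{-2N}$ for every $N$, so $p\in C^\tau S^{-\infty}_{1,0}$.

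The main technical obstacle is justifying the interchange of the $C^\tau$-seminorm in $x$ with the parameter integrals in $\xi$ (resp.\ $z$). This is a Minkowski-type inequality for Hölder norms: for each fixed $\eta$ the difference quotient $|f(x^0,\eta)-f(x^1,\eta)|_i/|x^0-x^1|^{\tau-[\tau]}$ is bounded by $\|f(\cdot,\eta)\|_{C^\tau}^i$, so integrating and then taking the supremum over $x^0\neq x^1$ (or vice versa, via Fubini on the finite difference) gives $\|\int f(\cdot,\eta)\,d\eta\|_{C^\tau}^i\le\int\|f(\cdot,\eta)\|_{C^\tau}^i\,d\eta$. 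A secondary point in $(\Leftarrow)$ is the identification of $p$ as the Fourier transform of $K(x,\cdot)$; this is handled by testing against Schwartz functions and appealing to the uniqueness of the Schwartz kernel of the linear functional $f\mapsto p(x,D_x)f(x)$ for each fixed $x$.
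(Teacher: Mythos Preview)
Your proof is correct and follows essentially the same approach as the paper: both directions rest on the Fourier relation $K(x,z)=\int e^{iz\cdot\xi}p(x,\xi)\,\dbar\xi$, with the forward implication handled by integration by parts in $\xi$ and Leibniz, and the converse by Fourier inversion plus integration by parts in $z$. The paper carries out the $C^\tau$ estimates by treating the $L^\infty$ part and the H\"older difference quotient separately rather than invoking a Minkowski-type inequality, and is terser on the converse (``proceeding as before''), but the substance is the same.
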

\begin{proof}
 The proof is similar to \cite[Proof of Theorem 7.6]{Wloka}, one mainly has to take the nonsmoothness with respect to $x$ into account, where $x$ plays the role of one parameter. Let $p\in C^\tau S^{-\infty}_{1,0}({\R}^n\times {\R}^n;X)$ for some $\tau>0$. Define 
\begin{equation}
\label{eq:CtauCinfty kernel CtauS-infty symbol}
 K_p(x,z):=\int_{{\R}^n}e^{iz\xi}p(x,\xi)\,\dbar\xi.
\end{equation}
For all $\alpha,\beta\in\N^n$ the estimate 
 \[
  \underset{z\in{\R}^n}{\sup}\left\|z^\beta\partial_z^\alpha K_p(\cdot,z)\right\|_{C^\tau\left({\R}^n;X\right)}\leq C_{\alpha,\beta},
 \]
can be derived from
\[
 z^\beta\partial_{x}^\gamma\partial_{z}^\alpha K_p(x,z)=\int_{{\R}^n}e^{iz\xi}(i\partial_{\xi})^\beta\Big((i{\xi})^\alpha\partial_{x}^\gamma p(x,\xi)\Big)\dbar\xi,
\]
which follows from integration by parts and the identity $z^\beta e^{iz\xi}=(-i\partial_{\xi})^\beta e^{iz\xi}$.
Indeed, for all $|\gamma|\leq[\tau]$, setting $D_{\xi}^\beta:=(-i)^{|\beta|}\partial_{\xi}^\beta$,
\begin{align*}
 &\underset{x\in{\R}^n}{\sup}\left\|z^\beta\partial_{x}^\gamma\partial_{z}^\alpha K_p(x,z)\right\|_X\\
&\quad =\underset{x\in{{\R}^n}}{\sup}\left\|\int_{{\R}^n}e^{iz\xi}(i\partial_{\xi})^\beta\Big((i{\xi})^\alpha\partial_{x}^\gamma p(x,\xi)\Big)\,\dbar\xi\right\|_X\leq C\,\underset{x\in{\R}^n}{\sup}\int_{{\R}^n}\left\|D_{\xi}^\beta\Big({\xi}^\alpha D_{x}^\gamma p(x,\xi)\Big)\right\|_X\,\dbar\xi\\
 &\quad\leq C\,\sum_{|\delta|\leq|\beta|}\int_{{\R}^n}\left|D_{\xi}^\delta({\xi}^\alpha)\right|\left\|D_{\xi}^{\beta-\delta}D_{x}^\gamma p(\cdot,\xi)\right\|_{L^\infty({\R}^n;X)}\,\dbar\xi
\leq C\int_{{\R}^n}\langle\xi\rangle^{-N}\,\dbar\xi 
\leq C
\end{align*}
independently of $z$ for any $N\geq n+1$. Therefore, for all $|\gamma|\leq[\tau]$ there exists a constant $C$ such that
\[
 \left\|z^\beta\partial_{x}^\gamma\partial_{z}^\alpha K_p(\cdot,z)\right\|_{L^\infty\left({\R}^n;X\right)}<C.
\]
We also have for $|\gamma|=[\tau]$
\begin{align*}
 &\underset{x^0\not=x^1}{\sup}\dfrac{\left\|z^\beta\partial_{x}^\gamma\partial_{z}^\alpha K_p(x^0,z)-z^\beta\partial_{x}^\gamma\partial_{z}^\alpha K_p(x^1,z)\right\|_X}{|x^0-x^1|^{\tau-[\tau]}} \\
 &=\underset{x^0\not=x^1}{\sup}\dfrac{\left\|\int_{{\R}^n}e^{iz\xi}\left[(i\partial_{\xi})^\beta\Big((i{\xi})^\alpha\partial_{x}^\gamma p(x^0,\xi)\Big)- (i\partial_{\xi})^\beta\Big((i{\xi})^\alpha\partial_{x}^\gamma p(x^1,\xi)\Big)\right]\,\dbar\xi\right\|_X}{|x^0-x^1|^{\tau-[\tau]}}\\
 &\leq C\int_{{\R}^n}\underset{x^0\not=x^1}{\sup}\dfrac{\left\|D_{\xi}^\beta\Big({\xi}^\alpha D_{x}^\gamma p(x^0,\xi)\Big)- D_{\xi}^\beta\Big({\xi}^\alpha D_{x}^\gamma p(x^1,\xi)\Big)\right\|_X}{|x^0-x^1|^{\tau-[\tau]}}\,\dbar\xi\\
 &\leq C\sum_{|\delta|\leq|\beta|}\int_{{\R}^n}\left|D_{\xi}^\delta({\xi}^\alpha)\right|\left\|D_{\xi}^{\beta-\delta} p(\cdot,\xi)\right\|_{C^\tau({\R}^n;X)}\,\dbar\xi
 \leq C\int_{{\R}^n}\langle\xi\rangle^{-N}\,\dbar\xi\leq C
\end{align*}
independently of $z$ for any $N\geq n+1$. Therefore, for all $\alpha,\beta\in\N^n$ there exists a constant $C_{\alpha,\beta}$ such that
\[
 \underset{z\in{\R}^n}{\sup}\left\|z^\beta\partial_{z}^\alpha K_p(\cdot,z)\right\|_{C^\tau\left({\R}^n;X\right)}<C_{\alpha,\beta}.
\]
Thus, $K_p$ is a $C^\tau - C^\infty$--kernel of the operator $p(x,D_x)$.\\
For the converse, we apply the Fourier transform with respect to $z$ on both sides of \eqref{eq:CtauCinfty kernel CtauS-infty symbol}, and proceeding as before we obtain that for all $i\in\N$, for all $N\in\N$
\[
 |p|_{C^\tau S^{-N}_{1,0}}^{i}:=\underset{|\alpha|\leq i}{\max}\ \underset{\xi\in{\R}^n}{\sup}\left\lVert\partial_{\xi}^\alpha p(\cdot,\xi)\right\lVert_{C^\tau({\R}^n;X)}\langle\xi\rangle^{N+|\alpha|}<\infty.
\]
Therefore $p(x,\xi)\in C^\tau({\R}^n_x;\mathcal{S}({\R}^n_\xi))$, i.e.,\ $p$ is of order $-\infty$.
\end{proof}

\begin{prop}
\label{prop:kernel phi p psi}
 Let $\varphi,\psi\in C_{b}^\infty({\R}^n)$ be such that $\supp\varphi\cap\supp\psi=\emptyset$ and let $p\in C^\tau S^m_{1,0}(\Rn\times \Rn;X)$ for some $\tau>0$, $m\in\R$. Then the operator $\varphi(x)p(x,D_x)\psi(x)$ has a $C^\tau - C^\infty$--kernel $K:\R^n\times\R^n\to X$.
\end{prop}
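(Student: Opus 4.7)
\medskip
\noindent\textbf{Proof strategy for Proposition~\ref{prop:kernel phi p psi}.}
The plan is to write the kernel explicitly as an oscillatory integral in $\xi$ and exploit the fact that $\supp\varphi$ and $\supp\psi$ are separated by some $d>0$ to integrate by parts arbitrarily many times in $\xi$, thereby producing absolute convergence together with rapid decay in $z$. Concretely, for $f\in\mathcal{S}(\R^n;X_0)$ I would begin from \eqref{eq:oscillatory integral} and write
\[
\varphi(x) p(x,D_x)\psi(x) f(x) = \varphi(x)\,\mathrm{Os}\text{-}\!\iint e^{i(x-y)\xi}p(x,\xi)\psi(y)f(y)\,dy\,\dbar\xi =\int K(x,x-y) f(y)\,dy,
\]
with candidate
\[
K(x,z) := \varphi(x)\psi(x-z)\,k(x,z),\qquad k(x,z) := \mathrm{Os}\text{-}\!\int e^{iz\xi}p(x,\xi)\,\dbar\xi.
\]
With $d:=\dist(\supp\varphi,\supp\psi)>0$ (which is the case in the applications of this proposition on compact manifolds), the prefactor $\varphi(x)\psi(x-z)$ and all of its $z$-derivatives vanish identically on $\{|z|<d\}$, since derivatives of $\psi(x-z)$ in $z$ are still supported where $x-z\in\supp\psi$.

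Next I would use the identity $e^{iz\xi}=|z|^{-2N}(-\Delta_\xi)^N e^{iz\xi}$ for $z\neq 0$ and integrate by parts $N$ times in $\xi$ to obtain
\[
\partial_z^\alpha k(x,z) = |z|^{-2N}\int_{\R^n} e^{iz\xi}(-\Delta_\xi)^N\bigl((i\xi)^\alpha p(x,\xi)\bigr)\,\dbar\xi
\]
as an absolutely convergent integral as soon as $2N>m+n+|\alpha|$. Since the integrand has $C^\tau(\R^n_x;X)$-norm bounded by $C\langle\xi\rangle^{m+|\alpha|-2N}|p|^{i}_{C^\tau S^m_{1,0}}$ for a suitable $i$, pulling the $C^\tau_x$-seminorm under the integral gives $\|\partial_z^\alpha k(\cdot,z)\|_{C^\tau}\leq C_{\alpha,N}|z|^{-2N}$ for every $z\neq 0$. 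Choosing, for fixed $\alpha,\beta$, the integer $N$ so large that $2N\geq |\beta|$ and $2N>m+n+|\alpha|$ then yields $\|z^\beta\partial_z^\alpha k(\cdot,z)\|_{C^\tau}\leq C_{\alpha,\beta}$ uniformly on $\{|z|\geq d\}$. Combining this with the Leibniz rule for $\partial_z^\alpha K(x,z)$ and with the observation above that every term in the Leibniz expansion vanishes on $\{|z|<d\}$, and using that $\varphi,\psi\in C^\infty_b(\R^n)$ act as multipliers on $C^\tau(\R^n;X)$, I would deduce
\[
\sup_{z\in\R^n}\bigl\| z^\beta \partial_z^\alpha K(\cdot,z)\bigr\|_{C^\tau(\R^n;X)}\leq C_{\alpha,\beta}\qquad\text{for all }\alpha,\beta\in\N^n,
\]
which is precisely the estimate required in Definition~\ref{def:Ctau Cinfty kernel PDOs}.

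The only remaining point is to justify the representation $\varphi(x)p(x,D_x)\psi(x)f(x)=\int K(x,x-y)f(y)\,dy$; this follows from the oscillatory-integral Fubini theorem in \cite[Chapter 1, \S 6]{KumanoGo} applied to the amplitude $p(x,\xi)\psi(y)f(y)\in\mathcal{A}_{0,0}^m$ (with $x$ a parameter), together with the identification of $K$ just carried out. The main obstacle is making sure that the $C^\tau$-regularity in $x$ survives the integration by parts: since $\partial_\xi$ and multiplication by $z^\beta$ commute with Hölder differences in $x$, and since $(-\Delta_\xi)^N p$ obeys the same type of $C^\tau$-seminorm bounds as $p$ (with order reduced by $2N$), this is immediate from Definition~\ref{def:Ctau symbols}; no delicate composition calculus for nonsmooth symbols is needed, so Lemma~\ref{lem:pdo order -infty has CtauCinfty kernel} can even be avoided, the kernel being constructed directly.
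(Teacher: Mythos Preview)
Your proposal is correct and follows essentially the same route as the paper: both arguments exploit that $\varphi(x)\psi(x-z)$ vanishes for $|z|<\delta$, insert the factor $|z|^{-2N}(-\Delta_\xi)^N$ via integration by parts to render the $\xi$-integral absolutely convergent, and then read off the $C^\tau_x$-estimates on $z^\beta\partial_z^\alpha K$ directly from the symbol bounds in Definition~\ref{def:Ctau symbols}. The paper organizes the computation around the auxiliary kernel $\widetilde K_k(x,z)=\int e^{iz\xi}(-\Delta_\xi)^k p(x,\xi)\,\dbar\xi$ for one fixed $k$ and then shows $K(x,z)=\widetilde K_k(x,z)\,|z|^{-2k}\varphi(x)\psi(x-z)$ is independent of $k$, whereas you integrate by parts after differentiating in $z$; these are cosmetic differences. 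One remark: both your argument and the paper's implicitly use $\dist(\supp\varphi,\supp\psi)>0$, which does not follow from $\supp\varphi\cap\supp\psi=\emptyset$ for general $\varphi,\psi\in C^\infty_b(\R^n)$; you flag this explicitly, and indeed in all later applications (Propositions~\ref{prop:phi k(x,Dx)psi CtauCinfty kernel Poisson}--\ref{prop:phi k(x,Dx)psi CtauCinfty kernel sgo} and Section~\ref{sec:Manifolds}) the relevant cut-offs have compact support, so the point is harmless.
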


\begin{proof}
 For some $\delta>0$, $\varphi(x)p(x,\xi)\psi(y)=0$ in $\{(x,y)\in\R^n\times\R^n:|x-y|<\delta\}$, and for any $k\in\N$, $|x-y|^{-2k}$ is smooth in $\{(x,y)\in\R^n\times\R^n:y\not=x\}$ and all derivatives are bounded in $\{(x,y)\in\R^n\times\R^n:|x-y|\geq\delta\}$. Then, for any $f\in\mathcal{S}(\R^n)$, the function $|x-y|^{-2k}\varphi(x)p(x,\xi)\psi(y)f(y)$ belongs to the space of amplitudes $\mathcal{A}_{0,0}^m$ with respect to $(y,\xi)\in\R^n\times\R^n$. Hence, as in \eqref{eq:oscillatory integral} we can write
\begin{align}
 \varphi(x)p(x,D_x)(\psi f)(x) \notag 
 &={\rm Os} - \iint e^{i(x-y)\cdot\xi}\varphi(x)p(x,\xi)\psi(y)f(y)\,dy\,\dbar\xi \notag \\
 &={\rm Os} - \iint |x-y|^{2k}e^{i(x-y)\cdot\xi}|x-y|^{-2k}\varphi(x)p(x,\xi)\psi(y)f(y)\,dy\,\dbar\xi  \notag \\
 &={\rm Os} - \iint e^{i(x-y)\cdot\xi}(-\Delta_\xi)^kp(x,\xi)|x-y|^{-2k}\varphi(x)\psi(y)f(y)\,dy\,\dbar\xi  \label{eq: Os1}
\end{align} 
by using integration by parts. We also have
\[
 (-\Delta_\xi)^kp(x,\xi)=\mathcal{O}(\langle\xi\rangle^{m-2k}).
\]
We can choose $k$ such that $m-2k\leq -n-1$, and $-2k\leq-n-1$. Then
\[
 \left|(-\Delta_\xi)^kp(x,\xi)|x-y|^{-2k}\varphi(x)\psi(y)f(y)\right|\leq C\langle\xi\rangle^{-n-1}|x-y|^{-n-1}\|f\|_{L^\infty(\R^n)},
\]
so the term $(-\Delta_\xi)^kp(x,\xi)|x-y|^{-2k}\varphi(x)\psi(y)f(y)$ belongs to $L^1(\R^n\times\R^n)$ with respect to $(y,\xi)$, and we can use Fubini's theorem to obtain that the right-hand side of \eqref{eq: Os1} coincides with 
\begin{equation}
\label{eq: Os2}
 \iint e^{i(x-y)\cdot\xi}(-\Delta_\xi)^kp(x,\xi)\,\dbar\xi\,|x-y|^{-2k}\varphi(x)\psi(y)f(y)\,dy.
\end{equation}
Considering
\begin{equation}
\label{eq:kernel1}
 \widetilde{K}_k(x,z):=\int_{\R^n} e^{iz\cdot\xi}(-\Delta_\xi)^kp(x,\xi)\,\dbar\xi,
\end{equation}
\eqref{eq: Os2} amounts to
\begin{equation}
\label{eq:kernel tilde K_k}
 \int_{\R^n} \widetilde{K}_k(x,x-y)\,|x-y|^{-2k}\varphi(x)\psi(y)f(y)\,dy. 
\end{equation}

\begin{claim}
\label{claim:tilde K_k is Cinfty}
 The function $\widetilde{K}_k(x,z)$ defined in \eqref{eq:kernel1} is in $C^\infty(\R^n\setminus\{0\};X)$ with respect to $z$ for all $x\in\R^n$, and it is in $C^\tau(\R^n;X)$ with respect to $x$ for all $z\in\R^n$.
\end{claim}

\begin{proof}
First let $\alpha\in\N^n$ be such that $|\alpha|+m-2k<-n-1$ then
 \[
  \partial_z^\alpha \widetilde{K}_k(x,z)=\int_{\R^n}(i\xi)^\alpha e^{iz\cdot\xi}(-\Delta_\xi)^kp(x,\xi)\,\dbar\xi,
 \]
 and therefore
 \begin{align*}
  \left\|\partial_z^\alpha \widetilde{K}_k(\cdot,z)\right\|_{C^\tau(\R^n;X)} 
  &\leq\int_{\R^n}\left\|(i\xi)^\alpha e^{iz\cdot\xi}(-\Delta_\xi)^kp(\cdot,\xi)\right\|_{C^\tau(\R^n)}\,\dbar\xi \\
  &\leq C\int_{\R^n}\langle\xi\rangle^{|\alpha|+m-2k}\,\dbar\xi \leq C\int_{\R^n}\langle\xi\rangle^{-n-1}\,\dbar\xi < \infty.
 \end{align*}

Finally, if $\alpha\in\N^n$ is such that $|\alpha|+m-2k\geq-n-1$, we can choose $\hat{k}\in\N$ such that $|\alpha|+m-2\hat{k}<-n-1$. By  integration by parts
 \begin{align*}
  &|z|^{2\hat{k}-2k}\partial_z^\alpha \widetilde{K}_k(x,z)
  = \int_{\R^n} e^{iz\cdot\xi}(-\Delta_\xi)^{\hat{k}-k}\left[(i\xi)^\alpha(-\Delta_\xi)^kp(x,\xi)\right]\,\dbar\xi.
 \end{align*}
Using the Leibniz rule one observes that the integrand is a sum, where each term is $\mathcal{O}(\langle\xi\rangle^{m-2\hat{k}+|\alpha|})$. Therefore the integrand is in $L^1(\R^n)$ uniformly with respect to $x,z\in\R^n$. Thus,
  \[
    \partial_z^\alpha \widetilde{K}_k(x,z)= |z|^{-2\hat{k}+2k}\int_{\R^n} e^{iz\cdot\xi}(-\Delta_\xi)^{\hat{k}-k}\left[(i\xi)^\alpha(-\Delta_\xi)^kp(x,\xi)\right]\,\dbar\xi,
  \]
where the first term in the product of the right hand side is smooth in $\{z\in\R^n:z\not=0\}$, and in the second term we apply the argument given above for the first case.
\end{proof}

\noindent From \eqref{eq:kernel tilde K_k} we can define the kernel that we need to conclude the proof of the theorem, by
\begin{align}
 K(x,z)&:=\widetilde{K}_k(x,z)\,|z|^{-2k}\varphi(x)\psi(x-z) \notag \\
 &=\int_{\R^n} e^{iz\cdot\xi}(-\Delta_\xi)^kp(x,\xi)\,\dbar\xi\,|z|^{-2k}\varphi(x)\psi(x-z).  \notag
\end{align}
Note that this definition is independent of $k$. Moreover, we have
\begin{claim}
 For all $N\in\N$, the function $K(x,z)$ is in $C^N(\Rn;X)$ with respect to $z$ for all $x\in\R^n$, it is in $C^\tau(\Rn;X)$ with respect to $x$ for all $z\in\R^n$, and for all $\alpha\in\N^n$ such that $|\alpha|\leq N$ and for all $M\in\N$ there exists a constant $C_{\alpha,M}$ such that
\[
 \left\|\partial_z^\alpha K(\cdot,z)\right\|_{C^\tau(\R^n;X)}\leq C_{\alpha,M}\langle z\rangle^{-M}\qquad \text{ for all } z\in\R^n.
\]
\end{claim}

\begin{proof}
Choose $k$ such that $m+N-2k<-n-1$. The statement follows from Claim \ref{claim:tilde K_k is Cinfty} since $K(x,z)$ is the product of $\widetilde{K}_k(x,z)$ with the functions $|z|^{-2k}$, $\varphi(x)$, and $\psi(x-z)$, which are smooth on $\R^n\setminus\{0\}$.
\end{proof}
\noindent Altogether $K$ is a $C^\tau - C^\infty$--kernel of the operator $\varphi(x)p(x,D_x)\psi(x)$.
\end{proof}

\section{Nonsmooth Green Operators}\label{sec:Green}

We briefly recall the definition of nonsmooth Green operators as they were introduced in \cite{NonsmoothGreen}. We refer to  \cite[Section 1.2]{FunctionalCalculus} for the treatment of Green operators in the smooth case. 

Recall that $\mathcal{S}_+:=\mathcal{S}(\overline{\R_+})$\index{$\mathcal{S}_+$} denotes the space of smooth rapidly decreasing functions on $\overline{\R_+}$. 

\begin{defin}[{\cite[Def.\ 4.1]{NonsmoothGreen}}]
\label{def:poisson trace Ctau}
 Let $\tau>0$, $m\in\R$. Let $U$ be $\R^{n-1}$ or $\overline{\R^n_+}$. The set of symbol--kernels $C^\tau S^m_{1,0}(U\times \R^{n-1};\mathcal{S}_{+})$\index{$C^\tau S^m_{1,0}(U\times \R^{n-1};\mathcal{S}_{+})$} is the set of functions $h(x,\xi',y_n)$, which are smooth in $(\xi',y_n)\in \R^{n-1}\times\overline{\R}_{+}$ and which are in $C^\tau(U)$ with respect to $x$ satisfying for all $\alpha\in\N^{n-1}, l,l'\in\N$ the estimate
\begin{equation}
\label{estimation2poissontrace}
 \left\lVert y_n^l\partial_{y_n}^{l'}\partial_{\xi'}^\alpha h(\cdot,\xi',\cdot)\right\lVert_{C^\tau(U;L^2_{y_n}({\R}_{+}))}\leq C_{\alpha,l,l'}\langle\xi'\rangle^{m+\frac{1}{2}-l+l'-|\alpha|}
\end{equation}
for all $\xi'\in\R^{n-1}$, for some constant $C_{\alpha,l,l'}$. 
\end{defin}
We note that a function $h$ belongs to $C^\tau S^{m}_{1,0}(U\times \R^{n-1};\mathcal{S}_{+})$ if and only if for all $l,l'\in\N$
\[
 y_n^l\partial_{y_n}^{l'}{h}\in C^\tau S^{m+\frac{1}{2}-l+l'}_{1,0}(U\times \R^{n-1};L^2_{y_n}({\R}_{+})).
\]
Moreover, we have
\[
 S^m_{1,0}(U\times \R^{n-1};\mathcal{S}_{+})=\bigcap_{\tau>0}C^\tau S^m_{1,0}(U\times \R^{n-1};\mathcal{S}_{+}),
\]
where $S^m_{1,0}(U\times \R^{n-1};\mathcal{S}_{+})$\index{$S^m_{1,0}(U\times \R^{n-1};\mathcal{S}_{+})$} is the smooth symbol--kernel space as e.g.\ in \cite[Section 2.3]{FunctionalCalculus}. We will use the shorthand notation $C^\tau S^m_{1,0}(\mathcal{S}_+)$\index{$C^\tau S^m_{1,0}(\mathcal{S}_+)$} for $C^\tau S^m_{1,0}(U\times \R^{n-1};\mathcal{S}_{+})$,  and $S^m_{1,0}(\mathcal{S}_+)$\index{$S^m_{1,0}(\mathcal{S}_+)$} for $S^m_{1,0}(U\times \R^{n-1};\mathcal{S}_{+})$. 
 We equip the space $C^\tau S^m_{1,0}(\mathcal{S}_+)$ with the following seminorms 
\begin{align*}
\label{eq:seminorms CtauSm1,0 poissontrace}
 |h|_{C^\tau S^m_{1,0}(\mathcal{S}_+)}^{i}:= \underset{l,l',|\alpha|\leq i}{\max}\ \underset{\xi'\in \R^{n-1}}{\sup}\left\|y_n^l\partial_{y_n}^{l'}\partial_{\xi'}^\alpha{h}(\cdot,\xi',\cdot)\right\|_{C^\tau(U;L^2_{y_n}({\R}_{+}))} \langle\xi'\rangle^{-m-\frac{1}{2}+l-l'+|\alpha|}
\end{align*}
for all $h\in C^\tau S^m_{1,0}(\mathcal{S}_+)$ and $i\in\N$, and the space $S^m_{1,0}(\mathcal{S}_+)$ with the following seminorms
\begin{align*}
 & |h|_i^{(m)}:= \underset{l,l',|\alpha|,|\beta|\leq i}{\max}\ \underset{\substack{x\in U \\ \xi'\in \R^{n-1}}}{\sup}\left\|y_n^l\partial_{y_n}^{l'}\partial_{x}^\beta\partial_{\xi'}^\alpha{h}(x,\xi',\cdot)\right\|_{L^2_{y_n}({\R}_{+})} \langle\xi'\rangle^{-m-\frac{1}{2}+l-l'+|\alpha|}
\end{align*}
for all $h\in S^m_{1,0}(\mathcal{S}_+)$ and $i\in\N$.

\subsection{Nonsmooth Poisson Operators}

\begin{defin}
Let $k\in C^\tau S^{m-1}_{1,0}(\overline{\R^n_+}\times\R^{n-1};\mathcal{S}_{+})$, $\tau>0$, $m\in\R$. Then we define the nonsmooth Poisson operator  of order $m$ on $v\in\mathcal{S}(\R^{n-1})$ associated to the symbol--kernel $k$ by
\[
 (k(x,D_{x'})v)(x',x_n):=\int_{\R^{n-1}}e^{ix'\cdot\xi'}{k}(x,\xi',x_n)\mathcal{F}_{x'\mapsto\xi'}[v](\xi')\,\dbar\xi'.
\]
\end{defin}
Using the boundary symbol operator from $\C$ to $C^\tau(\overline{\R_+})$
\[
 k(x,\xi',D_n)a:={k}(x,\xi',x_n)\cdot a\quad \text{ for }a\in\C
\]
for fixed $\xi',x'\in\R^{n-1}$, we can also express the operator $k(x,D_{x'})$ in the form
\[
 k(x,D_{x'})=\Op'(k(x,\xi',D_n)),
\]
where $\Op'(k(x,\xi',D_n))$\index{$\Op'$} denotes the pseudodifferential operator with respect to $x\in\overline{\R^n_+}$, $\xi'\in \R^{n-1}$ associated to $k(x,\xi',D_n)\in C^\tau S^{m-1}_{1,0}(\R^{n-1}\times \R^{n-1};\mathcal{L}(\C, L^2(\R_+)))$.

\subsection{Nonsmooth Trace Operators}

\begin{defin}
Let $t\in C^\tau S^m_{1,0}(\R^{n-1}\times\R^{n-1};\mathcal{S}_{+})$, $\tau>0$, $m\in\R$. Then we define the nonsmooth trace operator of order $m$ and class $0$ on $u\in\mathcal{S}(\overline{\R^n_+})$ associated to the symbol--kernel $t$ by
\[
 t(x',D_{x})(u)(x'):=\int_{\R^{n-1}}e^{ix'\cdot\xi'}\int_0^\infty{t}(x',\xi',y_n)\mathcal{F}_{x'\mapsto\xi'}[u(\cdot,y_n)]\,dy_n\,\dbar\xi'.
\]
\end{defin}
Using the boundary symbol operator
\[
 t(x',\xi',D_n)f:=\int_0^\infty{t}(x',\xi',y_n)f(y_n)\,dy_n\quad \text{ for }f\in\mathcal{S}_+,
\]
for fixed $\xi',x'\in\R^{n-1}$, we can also express the operator $t(x',D_{x})$ in the form
\[
 t(x',D_{x})=\Op'(t(x',\xi',D_n)),
\]
where $\Op'(t(x',\xi',D_n))$ denotes the pseudodifferential operator with respect to $x',\xi'\in \R^{n-1}$ associated to $t(x',\xi',D_n)\in C^\tau S^m_{1,0}(\R^{n-1}\times \R^{n-1};\mathcal{L}(L^2(\R_+),\C))$.

More generally a nonsmooth trace operator of order $m$ and class $r\in\N$ is of the form
\begin{equation}
\label{eq:higher class trace operators}
  t(x',D_{x}) u= \sum_{j=0}^{r-1} s_j(x',D_{x'})\partial_{x_n}^j u|_{x_n=0}+ t_0(x',D_{x})u\quad\text{for all }u\in \mathcal{S}(\overline{\R^n_+}),
\end{equation}
where $s_j\in C^\tau S^{m-j}_{1,0}(\R^{n-1}\times \R^{n-1})$ for all $j=0,\ldots, r-1$ and $t_0(x',D_{x})$ is a nonsmooth trace operator of order $m$ and class $0$.

\subsection{Nonsmooth Singular Green Operators}

We use the notation ${\R}_{++}^2:={\R}_+\times{\R}_+$, $\overline{\R_{++}^2}:=\overline{\R_+}\times\overline{\R_+}$. Since $\mathcal{S}(\overline{\R_+})$ is a nuclear space, $\mathcal{S}(\overline{\R_+})\hat{\otimes}\,\mathcal{S}(\overline{\R_+})=\mathcal{S}(\overline{\R_{++}^2})=:\mathcal{S}_{++}$\index{\mathcal{S}_{++}}. 

\begin{defin}[{\cite[Def.\ 4.1]{NonsmoothGreen}}]
\label{def:sgo Ctau}
Let $\tau>0$, $m\in\R$. The set $C^\tau S^m_{1,0}(\overline{\R^n_+}\times\R^{n-1};\mathcal{S}_{++})$, $\tau>0$, $m\in\R$, is the set of functions $g(x,\xi',y_n,w_n)$, which are smooth in $(\xi',y_n,w_n)\in \R^{n-1}\times\overline{\R_{++}^2}$ and which are in $C^\tau(\overline{\R^n_+})$ with respect to $x$ satisfying for all $\alpha\in\N^{n-1}, k,k',l,l'\in\N$ the estimate
\begin{align}
 \left\lVert y_n^k\partial_{y_n}^{k'}w_n^l\partial_{w_n}^{l'}\partial_{\xi'}^\alpha g(\cdot,\xi',\cdot,\cdot)\right\lVert_{C^\tau(\overline{\R^n_+};L^2_{y_n,w_n}({\R}_{++}^2))}  \leq \ C_{\alpha,k,k',l,l'}\langle\xi'\rangle^{m+1-k+k'-l+l'-|\alpha|}. \label{estimation2sgo}
\end{align}
for all $\xi'\in\R^{n-1}$, for some constant $C_{\alpha,k,k',l,l'}$. 
\end{defin}
We note that a function $g$ belongs to $C^\tau S^{m}_{1,0}(\overline{\R^n_+}\times\R^{n-1};\mathcal{S}_{++})$ if and only if for all $k,k',l,l'\in\N$
\[
 y_n^k\partial_{y_n}^{k'}w_n^l\partial_{w_n}^{l'}{g}\in C^\tau S^{m+1-k+k'-l+l'}_{1,0}(\overline{\R^n_+}\times\R^{n-1};L^2_{y_n,w_n}({\R}_{++}^2)).
\]
Moreover, we have
\[
 S^m_{1,0}(\overline{\R^n_+}\times\R^{n-1};\mathcal{S}_{++})=\bigcap_{\tau>0}C^\tau S^m_{1,0}(\overline{\R^n_+}\times\R^{n-1};\mathcal{S}_{++}),
\]
where $S^m_{1,0}(\overline{\R^n_+}\times\R^{n-1};\mathcal{S}_{++})$ is the corresponding smooth symbol--kernel class, cf.\ e.g.\ \cite[Section 2.3]{FunctionalCalculus}.
We will use the shorthand notation $C^\tau S^m_{1,0}(\mathcal{S}_{++})$\index{$C^\tau S^m_{1,0}(\mathcal{S}_{++})$} for $C^\tau S^m_{1,0}(\overline{\R^n_+}\times\R^{n-1};\mathcal{S}_{++})$, and $S^m_{1,0}(\mathcal{S}_{++})$\index{$S^m_{1,0}(\mathcal{S}_{++})$} for $S^m_{1,0}(\overline{\R^n_+}\times\R^{n-1};\mathcal{S}_{++})$. We equip the space $C^\tau S^m_{1,0}(\mathcal{S}_{++})$ with the following seminorms
\begin{align*}
 |g|_{C^\tau S^m_{1,0}(\mathcal{S}_{++})}^{i}:= &\underset{k,k',l,l',|\alpha|\leq i}{\max}\ \underset{\xi'\in\R^{n-1}}{\sup}\left\|y_n^k\partial_{y_n}^{k'}w_n^l\partial_{w_n}^{l'}\partial_{\xi'}^\alpha{g}(\cdot,\xi',\cdot,\cdot)\right\|_{C^\tau(\overline{\R^n_+};L^2_{y_n,w_n}({\R}_{++}^2))} \notag \\
 &\phantom{ \underset{k,k',l,l',|\alpha|\leq i}{\max}\ \underset{\xi'\in\R^{n-1}}{\sup} }\cdot\langle\xi'\rangle^{-m-1+k-k'+l-l'+|\alpha|}.
\end{align*}
for all $g\in C^\tau S^m_{1,0}(\mathcal{S}_{++})$ and $i\in\N$, and the space $S^m_{1,0}(\mathcal{S}_{++})$ with the following seminorms
\begin{align*}
 & |g|_i^{(m)}:= \underset{k,k',l,l',|\alpha|,|\beta|\leq i}{\max}\ \underset{\substack{x\in\overline{\R^n_+} \\ \xi'\in \R^{n-1}}}{\sup}\left\|y_n^k\partial_{y_n}^{k'}w_n^l\partial_{w_n}^{l'}\partial_{x}^\beta\partial_{\xi'}^\alpha{g}(x',\xi',\cdot,\cdot)\right\|_{L^2_{y_n,w_n}({\R}_{++}^2)} 
\langle\xi'\rangle^{-m-1+k-k'+l-l'+|\alpha|}
\end{align*}
for all $g\in S^{m}_{1,0}(\mathcal{S}_{++})$ and $i\in\N$. 

\begin{defin}
Let $g\in C^\tau S^{m}_{1,0}(\overline{\R^n_+}\times\R^{n-1};\mathcal{S}_{++})$, $\tau>0$, $m\in\R$. Then we define the nonsmooth singular Green operator of order $m$ and class $0$ on $u\in\mathcal{S}(\overline{\R^n_+})$ associated to the symbol--kernel $g$ by
\[
 g(x,D_{x})(u)(x):=\int_{\R^{n-1}}e^{ix'\cdot\xi'}\int_0^\infty{g}(x,\xi',x_n,w_n)\mathcal{F}_{x'\mapsto\xi'}[u(\cdot,w_n)]\,dw_n\,\dbar\xi'.
\]
\end{defin}
Using the boundary symbol operator
\[
 g(x,\xi',D_n)f:=\int_0^\infty{g}(x,\xi',x_n,w_n)f(w_n)\,dw_n\quad \text{ for }f\in\mathcal{S}_+,
\]
for fixed $x\in\overline{\R^n_+}$, $\xi'\in\R^{n-1}$, we can also express the operator $g(x,D_{x})$ in the form
\[
 g(x,D_{x})=\Op'(g(x,\xi',D_n)),
\]
where $\Op'(g(x,\xi',D_n))$ denotes the pseudodifferential operator with respect to $x\in\overline{\R^n_+}$, $\xi'\in \R^{n-1}$ associated to $g(x,\xi',D_n)$.\\

More generally a nonsmooth singular Green operator of order $m$ and class $r\in\N$ is of the form
\begin{equation*}
  g(x,D_x) u= \sum_{j=0}^{r-1} k_j(x,D_{x'})\partial_{x_n}^j u|_{x_n=0}+ g_0(x,D_x)u\quad\text{for all }u\in \mathcal{S}(\overline{\R^n_+}),
\end{equation*}
where $k_j(x,D_{x'})$ is a Poisson operator of order $m-j$ for all $j=0,\ldots, r-1$ and $g_0(x,D_x)$ is a nonsmooth singular Green operator of order $m$ and class $0$.

\section{Coordinate Changes for Nonsmooth Green Operators}
\label{section:change of variables Green}

In order to show invariance of Green operators with nonsmooth coefficients with respect to suitable coordinate transformations, we proceed as in Section \ref{section:change of variables} with some adaptations.\\

Let $\kappa:\overline{\R^n_+}\to\overline{\R^n_+}$ be a bounded smooth diffeomorphism (see Definition \ref{def:bounded smooth diffeo}). Let us also assume that $\kappa$ extends to a bounded smooth diffeomorphism from $\R^n$ to $\R^n$ and that it preserves the boundary $\partial\overline{\R^n_+}:=\{x\in\overline{\R^n_+}:x_n=0\}$\index{$\partial\overline{\R^n_+}$}, i.e., $\kappa (\R^{n-1}\times \{0\})= \R^{n-1}\times \{0\}$. Along with the diffeomorphism $\kappa$ we consider as in \cite[Section 2.4]{FunctionalCalculus}, the induced diffeomorphism on $\partial\overline{\R^n_+}\cong\R^{n-1}$, which will be called $\lambda$
\begin{equation}
\label{eq:lambda}
 \lambda(x'):=(\kappa_1(x',0),\ldots,\kappa_{n-1}(x',0))\qquad \text{ for all } x'\in\R^{n-1}.
\end{equation}

\begin{thm}
\label{thm: change variables Green}
 Let $\tau>0$, $\tau \not\in\N$ and $m\in\R$.
 \begin{enumerate}
  \item Let $h(x,D_{x'})$ be a nonsmooth Poisson operator of order $m$ with symbol--kernel $h\in C^\tau S^{m-1}_{1,0}(\overline{\R^n_+}\times \R^{n-1};\mathcal{S}_+)$. Then there is some $\widetilde{h}\in C^\tau S^{m-1}_{1,0}(\overline{\R^n_+}\times \R^{n-1};\mathcal{S}_+)$ such that
  \begin{equation}
   \label{eq:change of variables poisson}
   \widetilde{h}(x,D_{x'})v(x):=\kappa^{-1,*}h(x,D_{x'})\lambda^*v(x) \qquad \text{for }v\in\mathcal{S}(\R^{n-1}).
  \end{equation}
  \item Let $h(x',D_{x})$ be a nonsmooth trace operator of order $m$ and class $0$ with symbol--kernel $h\in C^\tau S^{m}_{1,0}(\R^{n-1}\times \R^{n-1};\mathcal{S}_+)$. Then there is some $\widetilde{h}\in C^\tau S^{m}_{1,0}(\R^{n-1}\times \R^{n-1};\mathcal{S}_+)$ such that
   \begin{equation}
    \label{eq:change of variables trace}
    \widetilde{h}(x',D_{x})u(x):=\lambda^{-1,*}h(x',D_{x})\kappa^*u(x') \qquad \text{for }u\in\mathcal{S}(\overline{\R^{n}_+}).
   \end{equation}
  \item Let $h(x,D_{x})$ be a nonsmooth singular Green operator of order $m$ and class $0$ with symbol--kernel $h\in C^\tau S^{m-1}_{1,0}(\overline{\R^n_+}\times \R^{n-1};\mathcal{S}_{++})$. Then there is some $\widetilde{h}\in C^\tau S^{m-1}_{1,0}(\overline{\R^n_+}\times \R^{n-1};\mathcal{S}_{++})$ such that
   \begin{equation}
    \label{eq:change of variables sgo}
    \widetilde{h}(x,D_{x})u(x):=\kappa^{-1,*}h(x,D_{x})\kappa^*u(x) \qquad \text{for }u\in\mathcal{S}(\overline{\R_+^n}).
   \end{equation}
 \end{enumerate}
\end{thm}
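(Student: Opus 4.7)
The plan is to adapt the four-step decomposition used in the proof of Theorem~\ref{theorem1}, treating each of the three cases in parallel. In each part we shall factor the coordinate-change map as a composition $\Phi_4\circ\Phi_3\circ\Phi_2\circ\Phi_1$, where $\Phi_1$ freezes the spatial coefficients, $\Phi_2$ pulls back the frozen variable by $\kappa^{-1}$ (or $\lambda^{-1}$ in the trace case, since then the frozen variable lies in $\R^{n-1}$), $\Phi_3$ applies the \emph{smooth} coordinate-change map for the corresponding type of Green operator to each frozen symbol-kernel, and $\Phi_4$ unfreezes by setting the auxiliary variable $z$ equal to $x$. The defining identities~\eqref{eq:change of variables poisson}--\eqref{eq:change of variables sgo} will then follow by formally freezing the coefficients at $z=\kappa^{-1}(x)$ (or $\lambda^{-1}(x')$) in the smooth coordinate-change identities.

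Concretely, for the Poisson case I would set $q_z(x,\xi',x_n):=h(z,\xi',x_n)$ and define
\begin{align*}
\Phi_1&\colon C^\tau S^{m-1}_{1,0}(\mathcal{S}_+)\to C^\tau\bigl(\overline{\R^n_+};S^{m-1}_{1,0}(\mathcal{S}_+)\bigr),\quad h\mapsto(z\mapsto q_z),\\
\Phi_2&\colon C^\tau\bigl(\overline{\R^n_+};S^{m-1}_{1,0}(\mathcal{S}_+)\bigr)\to C^\tau\bigl(\overline{\R^n_+};S^{m-1}_{1,0}(\mathcal{S}_+)\bigr),\quad (z\mapsto q_z)\mapsto(z\mapsto q_{\kappa^{-1}(z)}),\\
\Phi_3&\colon C^\tau\bigl(\overline{\R^n_+};S^{m-1}_{1,0}(\mathcal{S}_+)\bigr)\to C^\tau\bigl(\overline{\R^n_+};S^{m-1}_{1,0}(\mathcal{S}_+)\bigr),\quad (z\mapsto q_z)\mapsto(z\mapsto Tq_z),\\
\Phi_4&\colon C^\tau\bigl(\overline{\R^n_+};S^{m-1}_{1,0}(\mathcal{S}_+)\bigr)\to C^\tau S^{m-1}_{1,0}(\mathcal{S}_+),\quad (z\mapsto q_z)\mapsto q_z|_{z=x},
\end{align*}
where $T$ is the smooth coordinate-change map for Poisson operators relative to $(\kappa,\lambda)$, which is known to exist and be continuous from the smooth Boutet de Monvel calculus as presented in \cite[Section 2.4]{FunctionalCalculus}. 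The trace and singular Green cases are entirely analogous, with the frozen variable ranging over $\R^{n-1}$ (using $\lambda^{-1}$ in $\Phi_2$) in the trace case, and over $\overline{\R^n_+}$ (using $\kappa^{-1}$ in $\Phi_2$) in the singular Green case, with $T$ the corresponding smooth coordinate-change operator.

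The well-definedness and continuity of $\Phi_1$ and $\Phi_4$ follow by essentially repeating the proofs of Lemma~\ref{lemma:Phi1} and Lemma~\ref{lemma:Psi}, simply replacing the Banach space $X$ by $L^2_{y_n}(\R_+)$ or $L^2_{y_n,w_n}(\R^2_{++})$ equipped with the relevant polynomial/differential weights $y_n^k\partial_{y_n}^{k'}w_n^l\partial_{w_n}^{l'}$. All estimates in those proofs were uniform in the cotangent variable (now $\xi'$) and made no essential use of the Banach space structure beyond norm estimates, so they transfer verbatim once the seminorms of $C^\tau S^\bullet_{1,0}(\mathcal{S}_+)$ and $C^\tau S^\bullet_{1,0}(\mathcal{S}_{++})$ are used in place of those of $C^\tau S^m_{1,0}$; in particular, the induction on $[\tau]$ and the chain-rule step~\eqref{eq:derivative tilde p2} carry over unchanged. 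The continuity of $\Phi_2$ is immediate since $\kappa$, $\lambda$ and their inverses have bounded derivatives of all orders, so pullback preserves each $C^\tau$-seminorm up to a constant.

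The main obstacle I expect is $\Phi_3$: one needs a version of the smooth coordinate-change theorem for Poisson, trace and singular Green symbol-kernels with continuous dependence on the symbol in the Fréchet topology of $S^\bullet_{1,0}(\mathcal{S}_+)$ and $S^\bullet_{1,0}(\mathcal{S}_{++})$. This can be extracted from the construction in \cite[Section 2.4]{FunctionalCalculus}, where the transformed symbol-kernel is obtained through an explicit algorithm involving Taylor expansion of $\kappa$ along the normal direction, the induced $\lambda$-change on the boundary, and an oscillatory integral representation; a careful inspection of that algorithm yields the required continuity. Once $\Phi_3$ is in hand, the formal identity $\widetilde{h}=\Phi_4\Phi_3\Phi_2\Phi_1(h)$ implementing~\eqref{eq:change of variables poisson}--\eqref{eq:change of variables sgo} follows from the smooth identity applied at each frozen $z$, exactly as in Theorem~\ref{theorem1}.
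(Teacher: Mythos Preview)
Your proposal is correct and follows essentially the same approach as the paper: the paper likewise factors the coordinate change as $\Phi_4\circ\Phi_3\circ\Phi_2\circ\Phi_1$ with precisely the maps you describe (including the use of $\lambda^{-1}$ for $\Phi_2$ in the trace case), verifies $\Phi_1$ and $\Phi_4$ by repeating Lemmas~\ref{lemma:Phi1} and~\ref{lemma:Psi} with the extra factors $y_n^k\partial_{y_n}^{k'}$ resp.\ $y_n^k\partial_{y_n}^{k'}w_n^l\partial_{w_n}^{l'}$ and the adjusted exponents on $\langle\xi'\rangle$, and disposes of $\Phi_2$ and $\Phi_3$ by the same observations you make. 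The paper spells out the details only for the singular Green case and treats the others as straightforward modifications, exactly as you anticipate.
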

\begin{note} In the case $\tau\in \N$ the statements of the theorem hold true if one replaces $C^\tau$ by $C^{\tau-1,1}$, cf.~Remark~\ref{rem:tauN}. The following proofs can be easily carried over to that case.\\ It is also important to remark that the case of operators of general class can be reduced to the case of operators of class 0.
\end{note}

\begin{proof}
The proof of Theorem \ref{thm: change variables Green} follows the same scheme as the proof of Theorem \ref{theorem1}. In the following let $h$ be the symbol--kernel of a nonsmooth Poisson, trace or singular Green operator as above. We use the notation $U:=\overline{\R_+^n}$ and $S^m_{1,0}:=S^m_{1,0}(\mathcal{S}_+)$ if $h$ is the symbol--kernel of a Poisson operator, $U:=\R^{n-1}$  and $S^m_{1,0}:=S^m_{1,0}(\mathcal{S}_{+})$ if $h$ is the symbol--kernel of a trace operator, and $U:=\overline{\R_+^n}$ and $S^m_{1,0}:=S^m_{1,0}(\mathcal{S}_{++})$ if $h$ is the symbol--kernel of a singular Green operator, respectively.

We introduce the following maps:
 \begin{align*}
	\Phi_1\colon C^\tau S^m_{1,0}&\to C^\tau(U;S^m_{1,0})\colon h\mapsto q,
	\end{align*}
	such that for all $z,x\in U$, $\xi'\in \R^{n-1}$, 
	\begin{enumerate}
	 \item $q_z(x,\xi',y_n):=h(z,\xi',y_n)$ for all $y_n\in\R_+$ if $h$ is the symbol--kernel of a Poisson or a trace operator.
	 \item $q_z(x,\xi',y_n,w_n):=h(z,\xi',y_n,w_n)$ for all $y_n,w_n\in\R_+$  if $h$ is the symbol--kernel of a singular Green operator.
	\end{enumerate}
 \begin{align*}
	\Phi_2\colon C^\tau(U;S^m_{1,0})&\to C^\tau(U;S^m_{1,0})\colon q\mapsto \Phi_2(q),
	\end{align*}
	where for all $z,x\in U$, $\xi'\in \R^{n-1}$, 
	\begin{enumerate}
	 \item $(\Phi_2(q))_z(x,\xi',y_n):= q_{\kappa^{-1}(z)}(x,\xi',y_n)$ for all $y_n\in\R_+$ for Poisson operators.
	 \item $(\Phi_2(q))_z(x,\xi',y_n):= q_{\lambda^{-1}(z)}(x,\xi',y_n)$ for all $y_n\in\R_+$ for trace operators.
	 \item $(\Phi_2(q))_z(x,\xi',y_n,w_n):= q_{\kappa^{-1}(z)}(x,\xi',y_n,w_n)$ for all $y_n,w_n\in\R_+$ for singular Green operators.
	\end{enumerate}
  \begin{align*}
	\Phi_3\colon C^\tau(U;S^m_{1,0})&\to C^\tau(U;S^m_{1,0})\colon q\mapsto \Phi_3(q),
	\end{align*}
	where for all $z,x\in U$, $\xi'\in \R^{n-1}$, 
	\begin{enumerate}
	 \item $(\Phi_3(q))_z(x,\xi',y_n):= (Tq_z)(x,\xi',y_n)$ for all $y_n\in\R_+$, with
	  \[
	    (Th)(x,D_{x'}):=\kappa^{-1,*}h(x,D_{x'})\lambda^*\ \ \text{ for all } h\in S^m_{1,0}(\overline{\R^n_+}\times \R^{n-1};\mathcal{S}_+)
	  \]
	  for Poisson operators.
	 \item $(\Phi_3(q))_z(x,\xi',y_n):= (Tq_z)(x,\xi',y_n)$ for all $y_n\in\R_+$, with
	  \[
	    (Th)(x',D_{x}):=\lambda^{-1,*}h(x',D_{x})\kappa^*\ \ \text{ for all } h\in S^m_{1,0}(\R^{n-1}\times \R^{n-1};\mathcal{S}_+)
	  \]
	  for trace operators.
	 \item $(\Phi_3(q))_z(x,\xi',y_n,w_n):= (Tq_z)(x,\xi',y_n,w_n)$ for all $y_n, w_n\in\R_+$, with
	  \[
	    (Th)(x,D_{x}):=\kappa^{-1,*}h(x,D_{x})\kappa^*\ \ \text{ for all } h\in S^m_{1,0}(\overline{\R^n_+}\times \R^{n-1};\mathcal{S}_{++})
	  \]
	  for singular Green operators.
	\end{enumerate}
  \begin{align*}
        \Phi_4\colon C^\tau(U;S^m_{1,0})&\to C^\tau S^m_{1,0}\colon q\mapsto \Phi_4(q),
       \end{align*}
	where for all $z,x\in U$, $\xi'\in \R^{n-1}$, 
	\begin{enumerate}
	 \item $(\Phi_4(q))(x,\xi',y_n):=\left.q_z(x,\xi',y_n)\right|_{z=x}$ for all $y_n\in\R_+$ for Poisson and trace operators.
	 \item $(\Phi_4(q))(x,\xi',y_n,w_n):=\left.q_z(x,\xi',y_n,w_n)\right|_{z=x}$ for all $y_n, w_n\in\R_+$ for singular Green operators.
	\end{enumerate}
In the following we adapt the proofs of Section \ref{section:change of variables} and show that these maps are well--defined and continuous. Then the symbol--kernels of the operators given by \eqref{eq:change of variables poisson}, \eqref{eq:change of variables trace} and \eqref{eq:change of variables sgo}, can be written as
\[
 \widetilde{h}=\Phi_4(\Phi_3(\Phi_2(\Phi_1(h)))),
\]
and we conclude the statement of the theorem.

As before $\Phi_1$ corresponds to a ``freezing of coefficients'' by looking at $q_z=h(z,\cdot)$ as a smooth, $x$-independent symbol--kernel, parametrized by the spatial variable $z$. Moreover,  $\Phi_2$ and $\Phi_3$ treat the coordinate transformations with respect to the  spatial variable $z\in\Rn$ and the smooth symbol--kernel $q_z$, respectively. Finally, $\Phi_4$ corresponds to ``unfreezing'' the coefficients.

The proofs of the following lemmas are an adaptation of the proofs of the corresponding lemmas from Section \ref{section:change of variables} introducing the new variables resp.\ $y_n$ for Poisson or trace operators, resp.\ $(y_n,w_n)$ for singular Green operators as well as the factor resp.\ $y_n^k\partial_{y_n}^{k'}$ for Poisson or trace operators, resp.\ $y_n^k\partial_{y_n}^{k'}w_n^l\partial_{w_n}^{l'}$ for singular Green operators, in the symbol--kernel estimates and an additional term resp.\ $-k+k'$ for Poisson or trace operators, resp.\ $-k+k'-l+l'$ for singular Green operators, in the exponent of $\langle\xi'\rangle$, in a straightforward manner. For the convenience of the reader we give the details for the case of singular Green operators. In the following we set $X_1:=\mathcal{S}_{++}$ and $X_2:=L^2_{y_n,w_n}({\R}_{++}^2)$.

Again the continuity of the mappings $\Phi_2$ and $\Phi_3$ can be easily verified since they only act with respect to $z\in\overline{\R^n_+}$, $q_z\in S^m_{1,0}$, respectively. 

\begin{lemma}
\label{lemma:Phi1 sgo}
 Let $m\in\R$, $\tau>0$, $\tau\notin\N$. The map $\Phi_1\colon C^\tau S^m_{1,0}\to C^\tau(\overline{\R^n_+};S^m_{1,0})$ is well--defined and continuous.
\end{lemma}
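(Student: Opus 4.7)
The plan is to mirror the proof of Lemma~\ref{lemma:Phi1} verbatim, with the only structural changes being that the target Banach space becomes $X_2 = L^2_{y_n,w_n}(\R_{++}^2)$, each estimate must carry the additional symbol--kernel factor $y_n^k\partial_{y_n}^{k'}w_n^l\partial_{w_n}^{l'}$, and the weight $\langle\xi'\rangle$ acquires the extra exponent shift $-k+k'-l+l'$. Because $q_z(x,\xi',y_n,w_n):=h(z,\xi',y_n,w_n)$ has \emph{no} dependence on $x$, every $x$-derivative of $q_z$ of positive order vanishes, which is the key simplification throughout.

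First I would fix $z\in\overline{\R^n_+}$ and check that $q_z\in S^m_{1,0}(\mathcal{S}_{++})$. For $|\beta|\ge 1$ we have $\partial_x^\beta q_z\equiv 0$, and for $\beta=0$ the required bound
\[
  \bigl\|y_n^k\partial_{y_n}^{k'}w_n^l\partial_{w_n}^{l'}\partial_{\xi'}^\alpha q_z(x,\xi',\cdot,\cdot)\bigr\|_{X_2}\le C_{\alpha,k,k',l,l'}\langle\xi'\rangle^{m+1-k+k'-l+l'-|\alpha|}
\]
is just~\eqref{estimation2sgo} for $h$ evaluated at the spatial point $z$, which is controlled by the $L^\infty$-norm in the first argument of the $C^\tau$-norm appearing in Definition~\ref{def:sgo Ctau}.

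Next I would estimate $\|q\|^i_{C^\tau(\overline{\R^n_+};S^m_{1,0})}$ for each $i\in\N$, splitting into the supremum part and the Hölder difference quotient as in~\eqref{norm qz1} and~\eqref{norm qz2}. Because derivatives of $q_z$ in $x$ of order $\ge 1$ vanish, the max over $|\beta|\le i$ in the definition of $|\,\cdot\,|_i^{(m)}$ collapses to the term $\beta=0$. Hence, for $|\delta|\le[\tau]$,
\[
  \sup_{z}\bigl|\partial_z^\delta q_z\bigr|^{(m)}_i
  =\sup_z\max_{\substack{k,k',l,l'\le i\\ |\alpha|\le i}}\sup_{\xi'}\bigl\|y_n^k\partial_{y_n}^{k'}w_n^l\partial_{w_n}^{l'}\partial_{\xi'}^\alpha\partial_z^\delta h(z,\xi',\cdot,\cdot)\bigr\|_{X_2}\langle\xi'\rangle^{-m-1+k-k'+l-l'+|\alpha|}
\]
which is bounded by $C\,|h|^i_{C^\tau S^m_{1,0}(\mathcal{S}_{++})}$ directly from the first sum in~\eqref{normCtau} applied to $h$. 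For $|\delta|=[\tau]$ and $z^0\ne z^1$, the analogous computation with $\partial_z^\delta h(z^0,\cdot)-\partial_z^\delta h(z^1,\cdot)$ in place of $\partial_z^\delta h(z,\cdot)$ yields the Hölder bound
\[
  \sup_{z^0\ne z^1}\frac{|\partial_z^\delta q_{z^0}-\partial_z^\delta q_{z^1}|_i^{(m)}}{|z^0-z^1|^{\tau-[\tau]}}\le C\,|h|^i_{C^\tau S^m_{1,0}(\mathcal{S}_{++})},
\]
from the second sum in~\eqref{normCtau}. Combining the two estimates produces $\|q\|^i_{C^\tau(\overline{\R^n_+};S^m_{1,0})}\le C\,|h|^i_{C^\tau S^m_{1,0}(\mathcal{S}_{++})}$, which shows simultaneously that $\Phi_1$ is well--defined and continuous.

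I do not anticipate a genuine obstacle: the vanishing of $\partial_x^\beta q_z$ for $|\beta|\ge 1$ is what makes the whole argument go through, and it is as true here as in Lemma~\ref{lemma:Phi1}. The only point requiring care is purely notational, namely carrying the three extra indices $k,k',l,l'$ and the corresponding weight $\langle\xi'\rangle^{-k+k'-l+l'}$ through each line; the estimates themselves are identical to the pseudodifferential case after this substitution. The Poisson and trace cases follow by deleting the $w_n$ variable and the indices $l,l'$, with $X_2$ replaced by $L^2_{y_n}(\R_+)$ and $U$ replaced by $\R^{n-1}$ in the trace case.
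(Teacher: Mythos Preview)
Your proposal is correct and matches the paper's own proof essentially line by line: fix $z$, observe that $q_z$ has constant coefficients in $x$ so the $x$-derivatives vanish and the symbol--kernel estimate for $h$ gives $q_z\in S^m_{1,0}(\mathcal{S}_{++})$, then verify the $C^\tau$-regularity in $z$ by bounding the supremum part and the H\"older quotient separately via $|h|^i_{C^\tau S^m_{1,0}(\mathcal{S}_{++})}$. The paper proceeds identically, simply inserting the factors $y_n^k\partial_{y_n}^{k'}w_n^l\partial_{w_n}^{l'}$ and the exponent shift into the estimates from Lemma~\ref{lemma:Phi1}.
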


\begin{proof}
 For $z\in\overline{\R^n_+}$ fixed, the function $q_{z}$ lies in $S^m_{1,0}$. In fact, $q_{z}$ has constant coefficients and therefore it is smooth with respect to $x$. Since $g\in C^\tau S^m_{1,0}$, for any $\alpha\in\N^{n-1}$, $\beta\in\N^{n}$, $k,k',l,l'\in\N$ there exists a constant $C$ such that
\begin{align*}
 \left\|y_n^k\partial_{y_n}^{k'}w_n^l\partial_{w_n}^{l'}\partial_{\xi'}^\alpha\partial_{x}^\beta q_{z}(x,\xi',\cdot,\cdot)\right\|_{X_2} 
 &=\left\|y_n^k\partial_{y_n}^{k'}w_n^l\partial_{w_n}^{l'}\partial_{\xi'}^\alpha\partial_{x}^\beta \left(h(z,\xi',\cdot,\cdot)\right)\right\|_{X_2} \\ 
 &\leq C\langle\xi'\rangle^{m+1-k+k'-l+l'-|\alpha|}
\end{align*}
 which implies that $q_{z}\in S^m_{1,0}$ for all $z\in\overline{\R^n_+}$.\\

 By definition of $q_z$, if $h\in C^\tau$ with respect to $x$, then $q\in C^\tau$ with respect to $z$. Indeed, for all $i\in\N$ and for all $\delta\in\N^{n}$ such that $|\delta|\leq[\tau]$
\begin{align*}
 &\underset{z\in\overline{\R^n_+}}{\sup}|\partial_{z}^\delta q_z|^{(m)}_i\\
 &=\underset{z\in\overline{\R^n_+}}{\sup}\,\underset{k,k',l,l',|\alpha|,|\beta|\leq i}{\max}\,\underset{\substack{x\in\overline{\R^n_+}  \xi'\in \R^{n-1}}}{\sup}\left\|y_n^k\partial_{y_n}^{k'}w_n^l\partial_{w_n}^{l'}\partial_{\xi'}^\alpha\partial_{x}^\beta\partial_{z}^\delta q_z(x,\xi',\cdot,\cdot)\right\|_{X_2}
\langle\xi'\rangle^{-m-1+k-k'+l-l'+|\alpha|} \\
 &=\underset{z\in\overline{\R^n_+}}{\sup}\,\underset{k,k',l,l',|\alpha|\leq i}{\max}\,\underset{\xi'\in \R^{n-1}}{\sup}\left\|y_n^k\partial_{y_n}^{k'}w_n^l\partial_{w_n}^{l'}\partial_{\xi'}^\alpha\partial_{z}^\delta \left(h(z,\xi',\cdot,\cdot)\right)\right\|_{X_2}
\langle\xi'\rangle^{-m-1+k-k'+l-l'+|\alpha|} \\
 &\leq C\,|h|_{C^\tau S^m_{1,0}}^{i},
\end{align*}
Moreover for $z^0,z^1\in\overline{\R^n_+}$ and for all $\delta\in\N^{n}$ such that $|\delta|=[\tau]$,
\begin{align*}
 &|\partial_{z}^\delta q_{z^0}-\partial_{z}^\delta q_{z^1}|_i^{(m)} \\
 &=\underset{k,k',l,l',|\alpha|,|\beta|\leq i}{\max}\underset{\substack{x\in\overline{\R^n_+} \\ \xi'\in \R^{n-1}}}{\sup}\left\|y_n^k\partial_{y_n}^{k'}w_n^l\partial_{w_n}^{l'}\partial_{\xi'}^\alpha\partial_{x}^\beta\partial_{z}^\delta\Big({q_{z^0}}(x,\xi',\cdot,\cdot)-{q_{z^1}}(x,\xi',\cdot,\cdot)\Big)\right\|_{X_2} \notag \\
 &\qquad \cdot
\langle\xi'\rangle^{-m-1+k-k'+l-l'+|\alpha|} \notag \\
 &=\underset{k,k',l,l',|\alpha|,|\beta|\leq i}{\max}\underset{\xi'\in \R^{n-1}}{\sup}\left\|y_n^k\partial_{y_n}^{k'}w_n^l\partial_{w_n}^{l'}\partial_{\xi'}^\alpha\partial_{z}^\delta\Big( {h}(z^0,\xi',\cdot,\cdot)- h(z^1,\xi',\cdot,\cdot)\Big)\right\|_{X_2} \notag \\
 &\qquad \cdot\langle\xi'\rangle^{-m-1+k-k'+l-l'+|\alpha|} \notag \\
 &\leq\ C\ |z^0-z^1|^{\tau-[\tau]}|h|_{C^\tau S^m_{1,0}}^{i}.
\end{align*}
Hence, for all $i\in\N$ and for all $\delta\in\N^n$ such that $|\delta|=[\tau]$
\[
 \underset{\substack{z^0,z^1\in\overline{\R^n_+} \\ z^0\not=z^1}}{\sup}\dfrac{|\partial_{z}^\delta q_{z^0}-\partial_{z}^\delta q_{z^1}|^{(m)}_i}{|z^0-z^1|^{\tau-[\tau]}}\leq\ C\,|h|_{C^\tau S^m_{1,0}}^{i}.
\]
\end{proof}

\begin{lemma}
\label{lemma:Psi sgo}
 The map
 \begin{align}
  \Phi_4:C^\tau(\overline{\R^n_+};S^m_{1,0})&\to C^\tau S^m_{1,0} \notag \\
 (\Phi_4(q))(x',\xi',y_n,w_n)&:=\widetilde{q}(x',\xi',y_n,w_n):=\left.q_z(x',\xi',y_n,w_n)\right|_{z=x}. \notag 
 \end{align}
 is well--defined and continuous.
\end{lemma}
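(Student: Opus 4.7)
The plan is to mimic the proof of Lemma~\ref{lemma:Psi} verbatim, replacing the Banach space $X$ by the Fr\'echet space $X_1=\mathcal{S}_{++}$ (with its seminorms realized via the $X_2=L^2_{y_n,w_n}(\R^2_{++})$-norms weighted by $y_n^k\partial_{y_n}^{k'}w_n^l\partial_{w_n}^{l'}$) and making the corresponding adjustments in the $\langle\xi'\rangle$-exponent from $-m+|\alpha|$ to $-m-1+k-k'+l-l'+|\alpha|$. Throughout I take $\widetilde q(x,\xi',y_n,w_n):=q_z(x,\xi',y_n,w_n)|_{z=x}$.

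First, I would establish the $L^\infty$-type analogue of~\eqref{estimate norm infty tilde p}: for each $i\in\N$,
\begin{align*}
&\max_{k,k',l,l',|\alpha|\le i}\sup_{\xi'\in\R^{n-1}}\bigl\|y_n^k\partial_{y_n}^{k'}w_n^l\partial_{w_n}^{l'}\partial_{\xi'}^\alpha\widetilde q(\cdot,\xi',\cdot,\cdot)\bigr\|_{L^\infty(\overline{\R^n_+};X_2)}\langle\xi'\rangle^{-m-1+k-k'+l-l'+|\alpha|}\\
&\qquad\le C\,\|q\|^{i}_{L^\infty(\overline{\R^n_+};S^m_{1,0})},
\end{align*}
which follows immediately from estimating the sup over $x\in\overline{\R^n_+}$ by the larger sup over $(x,z)\in\overline{\R^n_+}\times\overline{\R^n_+}$ before restricting to the diagonal.

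Second, for the base case $\tau\in(0,1)$, I would use the triangle inequality $\widetilde q(x^0)-\widetilde q(x^1)=(q_{x^0}-q_{x^1})(x^0)+(q_{x^1}(x^0)-q_{x^1}(x^1))$. The first difference is controlled by the $C^\tau$ H\"older-norm of $z\mapsto q_z$ evaluated at seminorm index $|\alpha|$ plus $(k,k',l,l')$, yielding a factor $|x^0-x^1|^\tau$. The second difference is handled by the mean value theorem applied to the smooth symbol--kernel $q_{x^1}$; this consumes one extra $x$-derivative and therefore costs one extra level of $S^m_{1,0}$-seminorm, so it is bounded by $\|q\|^{|\alpha|+k+k'+l+l'+1}_{C^\tau(\overline{\R^n_+};S^m_{1,0})}\,|x^0-x^1|$, which in turn dominates $\|q\|^{|\alpha|+k+k'+l+l'+1}_{C^\tau(\overline{\R^n_+};S^m_{1,0})}\,|x^0-x^1|^\tau$ once $|x^0-x^1|<1$ (Remark~\ref{rk:norm Ctau x-ygeq1} extends the estimate to $|x^0-x^1|\ge 1$ via the $L^\infty$ bound).

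Third, I would perform induction on $[\tau]\in\N$, as in Claim~\ref{Case tau not in N}. The induction step rests on the chain-rule identity
\[
\partial_{x_j}\widetilde q(x,\xi',y_n,w_n)=\bigl.\partial_{z_j}q_z(x,\xi',y_n,w_n)\bigr|_{z=x}+\bigl.\partial_{x_j}q_z(x,\xi',y_n,w_n)\bigr|_{z=x}.
\]
The first term is treated by the induction hypothesis applied to $\partial_{z_j}q\in C^{\tau-1}(\overline{\R^n_+};S^m_{1,0})$, giving membership in $C^{\tau-1}S^m_{1,0}(\mathcal{S}_{++})$. The second term lies in $C^\tau_xS^m_{1,0}(\mathcal{S}_{++})$ because $x\mapsto\partial_{x_j}q_z$ (for fixed $z$) inherits the $C^\tau$-regularity of $q$, and the embedding $C^\tau_x\hookrightarrow C^{\tau-1}_x$ places it in the same class as the first term. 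Combining these yields $\partial_{x_j}\widetilde q\in C^{\tau-1}S^m_{1,0}(\mathcal{S}_{++})$, which together with the base-case estimate gives $\widetilde q\in C^\tau S^m_{1,0}(\mathcal{S}_{++})$ with continuous dependence on $q$, namely $\|\widetilde q\|^{i}_{C^\tau S^m_{1,0}(\mathcal{S}_{++})}\le C\,\|q\|^{i'}_{C^\tau(\overline{\R^n_+};S^m_{1,0})}$ for some $i'=i'(i,[\tau])$.

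The main obstacle is not conceptual but notational: one must track the five-tuple of parameters $(k,k',l,l',\alpha)$ (and later $\beta$ for the $x$-derivatives arising from the induction) through each step and verify that the weight $\langle\xi'\rangle^{-m-1+k-k'+l-l'+|\alpha|}$ is absorbed correctly at every application of the mean value theorem and of the induction hypothesis. Once the bookkeeping of seminorm indices is set up as in Lemmas~\ref{lemma:Phi1} and~\ref{lemma:Psi}, every estimate transfers line-by-line, and the case of Poisson and trace operators follows by the same argument with $(y_n,w_n)$ replaced by $y_n$ alone and $X_2$ replaced by $L^2_{y_n}(\R_+)$.
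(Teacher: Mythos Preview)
Your proposal is correct and is essentially identical to the paper's own proof: the paper reduces Lemma~\ref{lemma:Psi sgo} to three claims (the $L^\infty$ estimate, the base case $\tau\in(0,1)$, and the induction on $[\tau]$) and explicitly states that each is proved exactly as in \eqref{estimate norm infty tilde p}, Claim~\ref{Case tau in (0,1)}, and Claim~\ref{Case tau not in N}, inserting the extra variables $(y_n,w_n)$, the factors $y_n^k\partial_{y_n}^{k'}w_n^l\partial_{w_n}^{l'}$, and the shift $-k+k'-l+l'$ in the exponent of $\langle\xi'\rangle$. Your write-up captures precisely this bookkeeping and the chain-rule induction step.
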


\begin{proof} This follows from the following three claims:
\begin{claim}
\label{estimate norm infty tilde p sgo}
 Let $q\in C^\tau(\overline{\R^n_+};S^m_{1,0})$. For all $i\in\N$ there exists a constant $C\in\R$ such that
\[
 \underset{|\alpha|\leq i}{\max}\ \underset{\xi\in \R^{n-1}}{\sup}\ \left\|\partial_\xi^\alpha\widetilde{q}(\cdot,\xi,\cdot,\cdot)\right\|_{L^\infty(\overline{\R^n_+};X_2)}\langle\xi\rangle^{-m+k-k'+l-l'+|\alpha|} \leq C\left\|q_\bullet\right\|^i_{L^\infty(\overline{\R^n_+};S^m_{1,0})}.
\]
\end{claim}

\begin{claim}
\label{Case tau in (0,1) sgo}
 Let $q\in C^\tau(\overline{\R^n_+};S^m_{1,0})$ for $\tau\in(0,1)$. Then $\widetilde{q}\in C^\tau S^m_{1,0}$.
\end{claim}

\begin{claim}
\label{Case tau not in N sgo}
 Let $q\in C^\tau(\overline{\R^n_+};S^m_{1,0})$ for $\tau>0$, $\tau\notin\N$. Then $\widetilde{q}\in C^\tau S^m_{1,0}$.
\end{claim}

\noindent The proofs of Claim \ref{estimate norm infty tilde p sgo}, Claim \ref{Case tau in (0,1) sgo} and Claim \ref{Case tau not in N sgo} are done in the same way as the proofs of \eqref{estimate norm infty tilde p}, Claim \ref{Case tau in (0,1)} and Claim \ref{Case tau not in N} respectively, introducing the new variables $(y_n,w_n)$ as well as $y_n^k\partial_{y_n}^{k'}w_n^l\partial_{w_n}^{l'}$ in the symbol--kernel estimates and an additional term $-k+k'-l+l'$ in the exponent of $\langle\xi'\rangle$. This also concludes the proof of the lemma.
\end{proof}
Altogether Theorem \ref{thm: change variables Green} is also proven.
\end{proof}

\section{Localizations of  Nonsmooth Green Operators}\label{sec:Localization}

In the introduction of Section \ref{section:Truncation of nonsmooth pdos}, we mentioned that by \emph{remainder of a localization} of an operator we mean the composition of an operator with appropriate multiplication operators, i.e.\  operators representing multiplication by smooth functions with disjoint supports. We also consider this definition here for Green operators. In this section we consider \emph{the truncation of an operator}, which is the restriction of the operator to a subset of the whole space. When we have an operator $P$ defined on $\R^n$, its truncation $P_+$ to $\overline{\R_+^n}$ is the composition $P_+:=r_+\circ P\circ e_+$, where $e_+$ denotes the extension by zero from  $\overline{\R_+^n}$ to $\R^n$, and $r_+$ denotes the restriction of $\R^n$ to ${\R_+^n}:=\{x=(x_1,\ldots,x_n)\in\R^n:x_n>0\}$\index{${\R_+^n}$}. We start with the definition of the kernel representation of some Green operators with $C^\tau$--coefficients. \\

\begin{defin}
\label{def:Green op CtauCinfty kernel}
 A Green operator $a(x,D_x)=\twobytwo{p(x,D_x)_++g(x,D_x)}{k(x,D_{x'})}{t(x',D_x)}{s(x',D_{x'})}$ with $C^\tau$--coefficients is said to have a $C^\tau - C^\infty$--kernel if it satisfies the following: 
 \begin{enumerate}
 \item  $p(x,D_x)$ is a nonsmooth pseudodifferential operator on ${\R}^n$ with $C^\tau - C^\infty$--kernel $K_p$ as in Definition \ref{def:Ctau Cinfty kernel PDOs}. Its truncation $p(x,D_x)_+$\index{$p(x,D_x)_+$} to $\overline{\R_+^n}$ is given by
 \[
  p(x,D_x)_+f(x):=\int_{\overline{\R_+^n}}K_p(x,x-y)\,f(y)\,dy,
 \]
 for all $x\in{\overline{\R_+^n}}$ and for any $f\in\mathcal{S}(\overline{\R_+^n})$;
 \item $g(x,D_x)$ is a nonsmooth singular Green operator on $\overline{\R_+^n}$ with $C^\tau - C^\infty$--kernel, i.e., there exists $K_g$ such that
 \[
  g(x,D_x)f(x)=\int_{\R^{n-1}}\int_{\R_+}K_g(x,x'-w',x_n,w_n)\,f(w',w_n)\,dw_n\,dw',
 \]
 for all $x\in{\overline{\R_+^n}}$ and for any $f\in\mathcal{S}(\overline{\R_+^n})$, where for all $\alpha,\beta\in\N^{n-1}$, for all $k,k',l,l'\in\N$, there is some constant $C_{\alpha,\beta,k,k',l,l'}>0$ such that
 \[
  \sup_{\substack{z'\in{\R}^{n-1}\\ y_n,w_n\geq0}}\left\|(z')^\beta\partial_{z'}^\alpha y_n^k\partial_{y_n}^{k'} w_n^l\partial_{w_n}^{l'} K_g(\cdot,z',y_n,w_n)\right\|_{C^\tau_{x'}(\R^{n-1})}\leq C_{\alpha,\beta,k,k',l,l'};
 \]
 \item $t(x',D_{x})$ is a nonsmooth trace operator  from $\overline{\R_+^n}$ to ${\R}^{n-1}$ with $C^\tau - C^\infty$--kernel, i.e., there exists $K_t$ such that
 \[
  t(x',D_{x})f(x')=\int_{\R^{n-1}}\int_{\R_+}K_t(x',x'-y',y_n)\,f(y',y_n)\,dy_n\,dy',
 \]
 for all $x'\in{{\R}^{n-1}}$ and for any $f\in\mathcal{S}(\overline{\R_+^n})$, where for all $\alpha,\beta\in\N^{n-1}$, for all $l,l'\in\N$, there is some constant $C_{\alpha,\beta,l,l'}>0$ such that
 \[
  \sup_{\substack{z'\in{\R}^{n-1}\\ y_n\geq0}}\left\|(z')^\beta\partial_{z'}^\alpha y_n^l\partial_{y_n}^{l'} K_t(\cdot,z',y_n)\right\|_{C^\tau_{x'}(\R^{n-1})}\leq C_{\alpha,\beta,l,l'};
 \]
 \item $k(x,D_{x'})$ is a nonsmooth Poisson operator  from ${\R}^{n-1}$ to $\overline{\R_+^n}$, with $C^\tau - C^\infty$--kernel, i.e., there exists $K_k$ such that
 \[
  k(x,D_{x'})f(x)=\int_{\R^{n-1}}K_k(x,x'-y',x_n)\,f(y')\,dy',
 \]
 for all $x\in{\overline{\R_+^n}}$, for any $f\in\mathcal{S}({\R}^{n-1})$ and $K_k$ satisfies the same estimates as $K_t$ before with $C^\tau_{x'}(\R^{n-1})$ replaced by $C^\tau_{x}(\overline{\R^n_+})$.
 \item $s(x',D_{x'})$ is a nonsmooth pseudodifferential operator on $\R^{n-1}$  with $C^\tau - C^\infty$--kernel $K_s$ on ${\R}^{n-1}\times{\R}^{n-1}$ as in Definition \ref{def:Ctau Cinfty kernel PDOs}.
 \end{enumerate}
\end{defin}

\begin{lemma}
\label{lem:green op order -infty has CtauCinfty kernel}
 A Green operator $a(x,D_x)=\twobytwo{p(x,D_x)_++g(x,D_x)}{k(x,D_{x'})}{t(x',D_x)}{s(x',D_{x'})}$ with $C^\tau$--coefficients and of class $0$ is of order $-\infty$ if and only if $a(x,D_x)$ has a $C^\tau - C^\infty$--kernel.
\end{lemma}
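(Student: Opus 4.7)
The plan is to argue component-wise: every entry of the matrix representation of $a(x,D_x)$ has its own kernel in Definition~\ref{def:Green op CtauCinfty kernel}, so it suffices to show, for each entry separately, that being of order $-\infty$ is equivalent to having a $C^\tau-C^\infty$--kernel of the appropriate type. For the two purely pseudodifferential corners $p(x,D_x)_+$ and $s(x',D_{x'})$ the statement follows directly from Lemma~\ref{lem:pdo order -infty has CtauCinfty kernel}: for $s$ verbatim, and for $p(x,D_x)_+ = r_+\circ p(x,D_x)\circ e_+$ because its integral representation in Definition~\ref{def:Green op CtauCinfty kernel} is just the kernel of $p(x,D_x)$ restricted to $\overline{\R^n_+}\times\overline{\R^n_+}$, which inherits the $C^\tau-C^\infty$-estimates. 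Hence all the work is in the three boundary-type entries $k$, $t$, $g$.

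For the forward direction on each of these, I would adapt the proof of Lemma~\ref{lem:pdo order -infty has CtauCinfty kernel}, but using only the partial Fourier transform in the tangential variable $\xi'$ and keeping the normal variables $y_n$ (and $w_n$ for $g$) as parameters. For the Poisson case, set
\begin{equation*}
 K_k(x,z',x_n):=\int_{\R^{n-1}} e^{iz'\cdot\xi'}\,k(x,\xi',x_n)\,\dbar\xi'.
\end{equation*}
Exploiting $(z')^\beta e^{iz'\cdot\xi'}=(-i\partial_{\xi'})^\beta e^{iz'\cdot\xi'}$ and integration by parts,
\begin{equation*}
 (z')^\beta\, x_n^l\partial_{x_n}^{l'}\partial_{z'}^\alpha K_k(x,z',x_n)
 =\int_{\R^{n-1}} e^{iz'\cdot\xi'}(-i\partial_{\xi'})^\beta\!\Bigl((i\xi')^\alpha\, x_n^l\partial_{x_n}^{l'}k(x,\xi',x_n)\Bigr)\dbar\xi'.
\end{equation*}
Taking the $C^\tau_x$-norm under the integral sign and using that $k\in C^\tau S^{-\infty}_{1,0}(\mathcal{S}_+)$ to bound the integrand in $C^\tau$ by $C\langle\xi'\rangle^{-N}$ for arbitrary $N$, the integral converges uniformly in $(z',x_n)$ and yields the estimate required by Definition~\ref{def:Green op CtauCinfty kernel}. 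For $t$ the argument is the mirror image, and for $g$ it is the same with an additional factor $w_n^l\partial_{w_n}^{l'}$ present throughout.

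For the converse, inverse partial Fourier transform in $z'$ recovers the symbol-kernel, e.g.\ $k(x,\xi',x_n)=\int_{\R^{n-1}} e^{-iz'\cdot\xi'}K_k(x,z',x_n)\,dz'$, and one trades powers of $\xi'$ for derivatives $\partial_{z'}^\alpha$ via integration by parts. The kernel estimates from Definition~\ref{def:Green op CtauCinfty kernel} then give, for every $N$,
\begin{equation*}
 \bigl\|y_n^l\partial_{y_n}^{l'}\partial_{\xi'}^\alpha k(\cdot,\xi',\cdot)\bigr\|_{C^\tau(U;L^2_{y_n}(\R_+))}\leq C_N\langle\xi'\rangle^{-N},
\end{equation*}
placing $k$ in $C^\tau S^{-\infty}_{1,0}(\mathcal{S}_+)$; the singular Green and trace cases are handled identically.

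The main obstacle, which is purely bookkeeping but the easiest place to slip, is the mismatch between the $L^2_{y_n}$-type norms used in the symbol-kernel classes $C^\tau S^m_{1,0}(\mathcal{S}_+)$, $C^\tau S^m_{1,0}(\mathcal{S}_{++})$ and the pointwise Schwartz-type estimates on $y_n^k\partial_{y_n}^{k'} K$ in Definition~\ref{def:Green op CtauCinfty kernel}. Passing between them requires inserting or removing factors $\langle y_n\rangle^{-N}$ and a one-dimensional Sobolev embedding in $y_n$, using that $k\in C^\tau S^{-\infty}_{1,0}(\mathcal{S}_+)$ gives $\mathcal{S}$-decay in $y_n$ (and similarly in $(y_n,w_n)$ for $g$), so the two formulations are equivalent. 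Once this equivalence is fixed, the rest of the proof is a routine adaptation of the arguments used for Lemma~\ref{lem:pdo order -infty has CtauCinfty kernel}.
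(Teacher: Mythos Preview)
Your proposal is correct and follows essentially the same route as the paper: component-wise reduction to Lemma~\ref{lem:pdo order -infty has CtauCinfty kernel} for the pseudodifferential corners, and for the boundary entries the partial Fourier transform in $\xi'$ together with the integration-by-parts identity $(z')^\beta e^{iz'\cdot\xi'}=(-i\partial_{\xi'})^\beta e^{iz'\cdot\xi'}$, with the converse via the inverse transform in $z'$. The paper spells out the singular Green case and you the Poisson case, but the arguments are interchangeable; your observation about the $L^2_{y_n}$ versus pointwise-sup discrepancy and its resolution by a one-dimensional Sobolev inequality is exactly what the paper records separately (immediately after the lemma) via the inequalities $\|f\|_{L^\infty}\leq C\|f\|_{L^2}^{1/2}\|\partial_{y_n}f\|_{L^2}^{1/2}$ and $\|f\|_{L^2}\leq C\|f\|_{L^\infty}^{1/2}\|y_nf\|_{L^\infty}^{1/2}$.
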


\begin{proof}
For singular Green operators, the proof is an adaptation of the proof of Lemma \ref{lem:pdo order -infty has CtauCinfty kernel}, including the new variables $y_n,w_n$, as well as the factor  $y_n^k\partial_{y_n}^{k'} w_n^l\partial_{w_n}^{l'}$ in the kernel estimates, in a straightforward manner. For the convenience of the reader we give the details for this case.\\

We will use the shorthand notation $Y_n:=(y_n,w_n)$ and $Y_n^{kl}\partial_{Y_n}^{k'l'}:=y_n^k\partial_{y_n}^{k'} w_n^l\partial_{w_n}^{l'}$. Integrations with respect to $\xi'$ are understood to be over $\R^{n-1}$.\\

Let $g(x,D_x)$ be a singular Green operator on $\overline{\R_+^n}$ of order $-\infty$ with $C^\tau$--coefficients and class $0$, i.e., the symbol--kernel $g(x,\xi',y_n,w_n)$ belongs to the class $C^\tau S^{-\infty}_{1,0}(\overline{\R^n_+}\times\R^{n-1};\mathcal{S}_{++})$. Define 
\begin{equation}
\label{eq:kernel sgo}
 K_g(x,z',y_n,w_n):=\int_{\R^{n-1}}e^{iz'\xi'}g(x,\xi',y_n,w_n)\,\dbar\xi'
\end{equation}
for all $x\in \overline{\R^n_+}, z'\in \R^{n-1},y_n,w_n\in \R_+$.
The estimate 
 \[
  \left\|(z')^\beta\partial_{z'}^\alpha Y_n^{kl}\partial_{Y_n}^{k'l'} K_g(\cdot,z',\cdot,\cdot)\right\|_{C^\tau_{x}\left(\overline{\R_+^n};{L^2_{Y_n}({\R}_{++}^2)}\right)}\leq C_{\alpha,\beta,k,k',l,l'} 
 \]
can be derived from
\[
 (z')^\beta\partial_{x}^\gamma\partial_{z'}^\alpha K_g(x,z',Y_n)=\int_{\R^{n-1}}e^{iz'\xi'}(i\partial_{\xi'})^\beta\Big((i{\xi'})^\alpha\partial_{x'}^\gamma g(x,\xi',Y_n)\Big)\,\dbar\xi',
\]
which follows from integration by parts and the identity $(z')^\beta e^{iz'\xi'}=(-i\partial_{\xi'})^\beta e^{iz'\xi'}$.\\
Indeed, for all $|\gamma|\leq[\tau]$
\begin{align*}
 &\underset{x\in{\overline{\R_+^n}}}{\sup}\left|(z')^\beta\partial_{z'}^\alpha Y_n^{kl}\partial_{Y_n}^{k'l'}\partial_x^\gamma K_g(x,z',Y_n)\right|\\
 &\leq C\,\underset{x\in\overline{\R^n_+}}{\sup}\int \left|Y_n^{kl}\partial_{Y_n}^{k'l'} D_{\xi'}^\beta\Big({\xi'}^\alpha D_{x}^\gamma g(x,\xi',Y_n)\Big)\right|\,\dbar\xi'\\
 &\leq C\,\sum_{|\delta|\leq|\beta|}\int \left|D_{\xi'}^\delta({\xi'}^\alpha)\right|\left\|Y_n^{kl}\partial_{Y_n}^{k'l'}D_{\xi'}^{\beta-\delta}D_{x}^\gamma g(\cdot,\xi',\cdot,\cdot)\right\|_{L^\infty_x\left(\overline{\R_+^n};{L^2_{Y_n}({\R}_{++}^2)}\right)}\,\dbar\xi'\\
 &\leq C\int_{\R^{n-1}}\langle\xi'\rangle^{-N}\,\dbar\xi'\leq C
\end{align*}
independently of $z'\in\R^{n-1}$ for any $N\geq n$. Therefore, for all $|\gamma|\leq[\tau]$ there exists a constant $C$ such that
\[
 \left\|(z')^\beta\partial_{z'}^\alpha Y_n^{kl}\partial_{Y_n}^{k'l'}\partial_x^\gamma  K_g(\cdot,z',\cdot)\right\|_{L^\infty_{x}(\overline{\R_+^n};{L^2_{Y_n}({\R}_{++}^2)})}\leq C\qquad \text{for all }z'\in \R^{n-1}.
\]
We also have for $|\gamma|=[\tau]$
\begin{align*}
 &\underset{x^0\not=x^1}{\sup}\dfrac{\left|(z')^\beta\partial_{z'}^\alpha Y_n^{kl}\partial_{Y_n}^{k'l'}\partial_x^\gamma K_g(x^0,z',Y_n)-(z')^\beta\partial_{z'}^\alpha Y_n^{kl}\partial_{Y_n}^{k'l'}\partial_x^\gamma K_g(x^1,z',Y_n)\right|}{|x^0-x^1|^{\tau-[\tau]}} \\
 & \leq C\int \underset{x^0\not=x^1}{\sup}\dfrac{\left|Y_n^{kl}\partial_{Y_n}^{k'l'} D_{\xi'}^\beta{\xi'}^\alpha D_{x}^\gamma\Big( g(x^0,\xi',Y_n)- g(x^1,\xi',Y_n)\Big)\right|}{|x^0-x^1|^{\tau-[\tau]}}\,\dbar\xi'\\
 & \leq C\sum_{|\delta|\leq|\beta|}\int \left|D_{\xi'}^\delta({\xi'}^\alpha)\right|\left\|Y_n^{kl}\partial_{Y_n}^{k'l'} D_{\xi'}^{\beta-\delta} g(\cdot,\xi',\cdot,\cdot)\right\|_{C^\tau_x\left(\overline{\R_+^n};{L^2_{Y_n}({\R}_{++}^2)}\right)}\,\dbar\xi'\\
 & \leq C\int_{\R^{n-1}}\langle\xi'\rangle^{-N}\,\dbar\xi' 
 \leq C
\end{align*}
independently of $z'\in \R^{n-1}$ for any $N> n$. Therefore, for all $\alpha,\beta,k,k',l,l'$ there exists a constant $C_{\alpha,\beta,k,k',l,l'}$ such that
\[
 \left\|(z')^\beta\partial_{z'}^\alpha y_n^k\partial_{y_n}^{k'} w_n^l\partial_{w_n}^{l'} K_g(\cdot,z',\cdot,\cdot)\right\|_{C^\tau_{x}\left(\overline{\R_+^n};{L^2_{y_n,w_n}({\R}_{++}^2)}\right)}\leq C_{\alpha,\beta,k,k',l,l'}.
\]
Thus, $K_g$ is a $C^\tau - C^\infty$--kernel of the operator $g(x,D_x)$.\\
For the converse, we apply the Fourier transform with respect to $z'$ on both sides of \eqref{eq:kernel sgo}, so starting from a kernel $K_g(x,\xi',y_n,w_n)$ we get a symbol--kernel
\[
 g(x,\xi',y_n,w_n):=\int_{\R^{n-1}}e^{-iz'\xi'}K_g(x,z',y_n,w_n)\,\dbar z',
\]
and since $K_g\in C^\tau_{x}(\overline{\R_+^n};\mathcal{S}(\R^{n-1}_{z'}\times{\overline{\R_+}}_{y_n}\times{\overline{\R_+}}_{w_n}))$, then its Fourier transform with respect to $z'$ belongs to $C^\tau_{x}(\overline{\R_+^n};\mathcal{S}(\R^{n-1}_{\xi'}\times{\overline{\R_+}}_{,y_n}\times{\overline{\R_+}}_{,w_n}))$, i.e.\ $g(x,\xi',y_n,w_n)\in C^\tau S^{-\infty}_{1,0}(\overline{\R_+^n}\times\R^{n-1};\mathcal{S}_{++})$.\\

For Poisson and trace operators, the proof is very similar to the proof for singular Green operators, just considering $l=l'=0$ and $L^2_{y_n}({\R}_{+})$ instead of $L^2_{y_n,w_n}({\R}_{++}^2)$ in the kernel estimates.
\end{proof}

As before we use the notation $U:=\overline{\R_+^n}$ if $h$ is the symbol--kernel of a Poisson operator, and $U:=\R^{n-1}$ if $h$ is the symbol--kernel of a trace operator.

\begin{note}
 Note that in Definition \ref{def:poisson trace Ctau}, for a symbol--kernel $h\in C^\tau S^m_{1,0}(U\times\R^{n-1};\mathcal{S}_{+})$, the inequality given in \eqref{estimation2poissontrace}
\begin{equation*}
 \left\lVert y_n^l\partial_{y_n}^{l'}\partial_{\xi'}^\alpha h(\cdot,\xi',\cdot)\right\lVert_{C^\tau(U;L_{y_n}^2(\R_+))}\leq C_{\alpha,l,l'}\langle\xi'\rangle^{m+\frac{1}{2}-l+l'-|\alpha|}.
\end{equation*}
 for all $l,l'\in\N$, is equivalent to the inequality
\begin{equation*}
 \underset{y_n\geq0}{\sup}\left\lVert y_n^l\partial_{y_n}^{l'}\partial_{\xi'}^\alpha h(\cdot,\xi',y_n)\right\lVert_{C^\tau(U)}\leq \tilde{C}_{\alpha,l,l'}\langle\xi'\rangle^{m+1-l+l'-|\alpha|}.
\end{equation*}
for all $l,l'\in\N$. Similarly, note that in Definition \ref{def:sgo Ctau}, for a symbol--kernel $g\in C^\tau S^m_{1,0}(\overline{\R_+^n}\times\R^{n-1};\mathcal{S}_{++})$, the inequality given in \eqref{estimation2sgo}
\begin{align*}
& \left\lVert y_n^k\partial_{y_n}^{k'}w_n^l\partial_{w_n}^{l'}\partial_{\xi'}^\alpha g(\cdot,\xi',\cdot,\cdot)\right\lVert_{C^\tau(\overline{\R_+^n};L^2_{y_n,w_n}({\R}_{++}^2))}
 \leq C_{\alpha,k,k',l,l'}\langle\xi'\rangle^{m+1-k+k'-l+l'-|\alpha|}
\end{align*}
 is equivalent to the inequality
\begin{align*}
& \underset{y_n,w_n\geq0}{\sup}\left\lVert y_n^k\partial_{y_n}^{k'}w_n^l\partial_{w_n}^{l'}\partial_{\xi'}^\alpha g(\cdot,\xi',y_n,w_n)\right\lVert_{C^\tau(\overline{\R_+^n})} 
\leq C_{\alpha,k,k',l,l'}\langle\xi'\rangle^{m+2-k+k'-l+l'-|\alpha|}. 
\end{align*}
This follows from the inequalities (see \cite[Lemma 4.6]{NonsmoothGreen})
\begin{align*}
 \|f\|_{L^\infty(\R_+)}&\leq C\|f\|_{L^2(\R_+)}^{\frac{1}{2}}\|\partial_{y_n}f\|_{L^2(\R_+)}^{\frac{1}{2}}, \\
 \|f\|_{L^2(\R_+)}&\leq C\|f\|_{L^\infty(\R_+)}^{\frac{1}{2}}\|y_nf\|_{L^\infty(\R_+)}^{\frac{1}{2}}.
\end{align*}
\end{note}

\begin{lemma}
\label{lem:eta k has Ctau Cinfty kernel}
 Let $\eta\in C_{b}^\infty(\overline{\R_+})$ be such that $\eta\equiv0$ on $[0,\delta]$, and $\eta\equiv1$ on $[2\delta,+\infty)$ for some $\delta>0$. Let ${h}\in C^\tau S^{m}_{1,0}(U\times\R^{n-1};\mathcal{S}_+)$ be the symbol--kernel of a Poisson or trace operator with $C^\tau$--coefficients. Then, $\eta(x_n)h(x,D_x)$ has a $C^\tau - C^\infty$--kernel.
\end{lemma}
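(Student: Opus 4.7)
The strategy is to identify $\eta(x_n)h(x,D_x)$ with a Poisson or trace operator whose symbol-kernel is of order $-\infty$, and then invoke Lemma~\ref{lem:green op order -infty has CtauCinfty kernel} (in its Poisson/trace variant, cf.\ the last sentence of its proof). Set
\[
\tilde h(x,\xi',y_n):=\eta(y_n)\,h(x,\xi',y_n).
\]
In the Poisson case, evaluation at $y_n=x_n$ immediately gives $\tilde h(x,D_{x'})v=\eta(x_n)\,h(x,D_{x'})v$. In the trace case, $\eta(y_n)$ is independent of $x'$ and thus passes through $\mathcal{F}_{x'\to\xi'}$, so that $\tilde h(x',D_x)u=h(x',D_x)(\eta(\cdot_n)u)$, which is the natural interpretation of $\eta(x_n)h(x,D_x)$ in the trace case (where the output lives on $\R^{n-1}$).

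The main technical step is to show that $\tilde h\in C^\tau S^{M}_{1,0}(U\times\R^{n-1};\mathcal{S}_+)$ for every $M\in\R$. By the Leibniz rule, $\partial_{y_n}^{l'}\tilde h$ is a finite sum of terms $\eta^{(l'-j)}(y_n)\,\partial_{y_n}^{j}h$ with $0\le j\le l'$. For $j=l'$ the factor $\eta(y_n)$ is supported in $[\delta,\infty)$, and for $j<l'$ the factor $\eta^{(l'-j)}(y_n)$ is supported in the compact interval $[\delta,2\delta]$; in both cases $y_n\ge\delta$. Invoking the $L^\infty_{y_n}$-equivalent formulation of the symbol-kernel estimates recalled in the Remark preceding the lemma, for every $L\in\N$ one has
\[
\sup_{y_n\ge 0}\bigl\|y_n^{l+L}\partial_{y_n}^{j}\partial_{\xi'}^\alpha h(\cdot,\xi',y_n)\bigr\|_{C^\tau(U)}\le C\,\langle\xi'\rangle^{m+1-l-L+j-|\alpha|}.
\]
Restricting to $y_n\ge\delta$ and dividing by $y_n^L\ge\delta^L$ converts the $\mathcal{S}_+$-decay in $y_n$ into rapid decay in $\xi'$, yielding the corresponding symbol-kernel estimates for $\tilde h$ with $m$ replaced by $m-L$. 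Since $L$ is arbitrary, $\tilde h$ is of order $-\infty$.

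With this reduction in place, Lemma~\ref{lem:green op order -infty has CtauCinfty kernel} directly produces a $C^\tau-C^\infty$--kernel of the operator associated with $\tilde h$, which, by the identification of the first paragraph, is also a $C^\tau-C^\infty$--kernel of $\eta(x_n)h(x,D_x)$. I expect the main obstacle to be less the decay argument itself (a routine exchange of $y_n$-powers for $\xi'$-powers on $\{y_n\ge\delta\}$) than settling the correct interpretation of the notation $\eta(x_n)h(x,D_x)$ in the trace case and the bookkeeping of the Leibniz terms with $j<l'$; the latter is handled by the compact support of $\eta^{(l'-j)}$, which makes the bound on those terms even stronger than on the main $j=l'$ term.
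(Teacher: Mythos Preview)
Your proposal is correct and follows essentially the same approach as the paper: define $\tilde h(x,\xi',y_n)=\eta(y_n)h(x,\xi',y_n)$, trade extra powers of $y_n$ (available since $y_n\ge\delta$ on $\supp\eta$) for arbitrary negative powers of $\langle\xi'\rangle$ via the $L^\infty_{y_n}$-form of the symbol--kernel estimates, conclude that $\tilde h\in C^\tau S^{-\infty}_{1,0}(\mathcal{S}_+)$, and then apply Lemma~\ref{lem:green op order -infty has CtauCinfty kernel}. Your explicit Leibniz bookkeeping and your discussion of the trace-case interpretation are more careful than the paper's presentation, but the underlying argument is identical.
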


\begin{proof}
 First of all, observe that ${h}(\cdot,\cdot,y_n)\in C^\tau S^{-\infty}_{1,0}(U\times\R^{n-1})$ uniformly in $y_n\geq\delta$ for every $\delta>0$. Indeed, if ${h}\in C^\tau S^{m}_{1,0}(U\times\R^{n-1};\mathcal{S}_+)$, then for all $\alpha\in\N^{n-1}$, for all $N,l,l'\in\N$, there exists a constant $C$ such that for all $\xi'\in\R^{n-1}$
\[
 \underset{y_n\geq0}{\sup}\left\|y_n^{l+N}\partial_{y_n}^{l'}\partial_{\xi'}^\alpha{h}(\cdot,\xi',y_n)\right\|_{C^\tau(U)} \leq C\langle\xi'\rangle^{m+1-N-l+l'-|\alpha|},
\]
and, if $y_n\geq\delta>0$, this implies that
\begin{align*}
 \left\|y_n^l\partial_{y_n}^{l'}\partial_{\xi'}^\alpha{h}(\cdot,\xi',y_n)\right\|_{C^\tau(U)} 
 &\leq Cy_n^{-N}\langle\xi'\rangle^{m+1-N-l+l'-|\alpha|}
 \leq C_{\delta}\langle\xi'\rangle^{m+1-N-l+l'-|\alpha|},
\end{align*}
for some constant $C_{\delta}$. Hence for all $N,l,l'\in\N$ and $y_n\geq \delta>0$
\[
 \left\|y_n^l\partial_{y_n}^{l'}\eta(y_n)\partial_{\xi'}^\alpha{h}(\cdot,\xi',y_n)\right\|_{C^\tau(U)}\leq C_{\delta}\langle\xi'\rangle^{m+1-N-l+l'-|\alpha|}.
\]
Therefore $\eta(y_n){h}(x,\xi',y_n)\in C^\tau S^{-\infty}_{1,0}(U\times\R^{n-1};\mathcal{S}_+)$, which by Lemma \ref{lem:green op order -infty has CtauCinfty kernel} implies that the operator $\eta(x_n){h}(x,D_x)$ has a $C^\tau - C^\infty$--kernel.
\end{proof}

In the following sections, given two sets $A,B\subseteq\overline{\R_+^n}$ we denote by $\dist(A,B)$ the distance between them, i.e.\ $\dist(A,B):=\inf\{|x-y|:x\in A, y\in B\}$\index{$\dist(A,B)$}, where $|\cdot|$ denotes the Euclidean distance in $\overline{\R_+^n}$.

The transformation of an operator on a manifold with boundary after a coordinate change, produces operators with $C^\tau - C^\infty$--kernel, and in the following sections, we study these operators, which are localizations of operators acting on and going to the boundary of $\overline{\R_+^n}$.

\subsection{Nonsmooth Poisson Operators}

\begin{corol}
\label{cor:phi k(x,Dx) CtauCinfty kernel}
 Let $\varphi\in C_{b}^\infty(\overline{\R_+^n})$ be such that $\dist(\supp\varphi,\partial\overline{\R_+^n})>0$. Let ${k}\in C^\tau S^{m-1}_{1,0}(\overline{\R^n_+}\times\R^{n-1};\mathcal{S}_+)$ be the symbol--kernel of a Poisson operator with $C^\tau$--coefficients. Then the operator $\varphi(x){k}(x,D_{x'})$ has a $C^\tau - C^\infty$--kernel.
\end{corol}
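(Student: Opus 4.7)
The plan is to reduce the corollary directly to Lemma~\ref{lem:eta k has Ctau Cinfty kernel} by means of the distance hypothesis and a cutoff in the normal variable, then check that multiplication by $\varphi$ preserves the $C^\tau - C^\infty$--kernel structure.

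Since $\dist(\supp\varphi,\partial\overline{\R_+^n})>0$, we can choose $\delta>0$ with $\supp\varphi\subseteq\{x\in\overline{\R_+^n}:x_n\geq2\delta\}$. Pick $\eta\in C_b^\infty(\overline{\R_+})$ with $\eta\equiv0$ on $[0,\delta]$ and $\eta\equiv1$ on $[2\delta,+\infty)$. Then on $\supp\varphi$ we have $\eta(x_n)=1$, so that
\[
 \varphi(x)\,k(x,D_{x'}) \;=\; \varphi(x)\,\eta(x_n)\,k(x,D_{x'}).
\]
By Lemma~\ref{lem:eta k has Ctau Cinfty kernel} applied to the Poisson symbol--kernel $k\in C^\tau S^{m-1}_{1,0}(\overline{\R^n_+}\times\R^{n-1};\mathcal{S}_+)$, the operator $\eta(x_n)k(x,D_{x'})$ admits a $C^\tau - C^\infty$--kernel $K_{\eta k}$ on $\overline{\R^n_+}\times\R^{n-1}\times\overline{\R_+}$, i.e.\ there is a representation
\[
 \eta(x_n)k(x,D_{x'})v(x) \;=\; \int_{\R^{n-1}} K_{\eta k}(x,x'-y',x_n)\,v(y')\,dy'
\]
with $\sup_{z'\in\R^{n-1},\,y_n\geq0}\|(z')^\beta\partial_{z'}^\alpha y_n^l\partial_{y_n}^{l'}K_{\eta k}(\cdot,z',y_n)\|_{C^\tau_x(\overline{\R^n_+})}<\infty$ for every $\alpha,\beta\in\N^{n-1}$ and $l,l'\in\N$.

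Define $K(x,z',x_n):=\varphi(x)\,K_{\eta k}(x,z',x_n)$. Then
\[
 \varphi(x)\,k(x,D_{x'})v(x) \;=\; \int_{\R^{n-1}} K(x,x'-y',x_n)\,v(y')\,dy',
\]
so it only remains to verify that $K$ still satisfies the $C^\tau - C^\infty$--kernel estimates of Definition~\ref{def:Green op CtauCinfty kernel}~(4). Since $\varphi\in C_b^\infty(\overline{\R^n_+})\subseteq C^\tau(\overline{\R^n_+})$ with $C^\tau$--norm controlled by finitely many sup-norms of derivatives of $\varphi$, and $C^\tau(\overline{\R^n_+})$ is a Banach algebra up to a constant (i.e.\ $\|fg\|_{C^\tau}\leq C\|f\|_{C^\tau}\|g\|_{C^\tau}$), we obtain
\[
 \bigl\|(z')^\beta\partial_{z'}^\alpha y_n^l\partial_{y_n}^{l'}K(\cdot,z',y_n)\bigr\|_{C^\tau_x(\overline{\R^n_+})}
 \leq C\,\|\varphi\|_{C^\tau}\,\bigl\|(z')^\beta\partial_{z'}^\alpha y_n^l\partial_{y_n}^{l'}K_{\eta k}(\cdot,z',y_n)\bigr\|_{C^\tau_x(\overline{\R^n_+})},
\]
which is uniformly bounded in $z'\in\R^{n-1}$ and $y_n\geq0$. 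Thus $K$ is a $C^\tau - C^\infty$--kernel for $\varphi(x)\,k(x,D_{x'})$.

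No step is really an obstacle here; the work has already been done in Lemma~\ref{lem:eta k has Ctau Cinfty kernel}, and the only thing one has to notice is that the support condition on $\varphi$ allows the insertion of the cutoff $\eta(x_n)$. The only minor technical point is the algebra property of $C^\tau$, but this is standard and multiplication by a $C_b^\infty$ factor is harmless.
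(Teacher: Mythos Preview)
Your proof is correct and follows essentially the same approach as the paper: insert a normal cutoff $\eta(x_n)$ using the distance hypothesis, invoke Lemma~\ref{lem:eta k has Ctau Cinfty kernel}, and observe that multiplication by the $C^\infty_b$ function $\varphi$ preserves the $C^\tau$--$C^\infty$--kernel property. The paper's version is terser (it simply asserts the last step), whereas you spell out the algebra property of $C^\tau$; the only cosmetic issue is that you should write the kernel as $K(x,z',y_n):=\varphi(x)K_{\eta k}(x,z',y_n)$ with $y_n$ an independent variable rather than $x_n$, to match Definition~\ref{def:Green op CtauCinfty kernel}(4).
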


\begin{proof}
 Let $\delta>0$ be such that $\dist(\supp\varphi,\partial\overline{\R_+^n})=\delta$. Let $\eta\in C_{b}^\infty(\overline{\R_+})$ be such that $\eta\equiv0$ on $[0,\delta/2]$, and $\eta\equiv1$ on $[\delta,+\infty)$. From Lemma \ref{lem:eta k has Ctau Cinfty kernel} the operator $\eta(x_n)k(x,D_{x'})$ has a $C^\tau - C^\infty$--kernel. Hence the product by the smooth function $\varphi$, $\varphi(x){k}(x,D_{x'})=\varphi(x)\eta(x_n){k}(x,D_{x'})$ has a $C^\tau - C^\infty$--kernel.
\end{proof}

\begin{prop}
\label{prop:phi k(x,Dx)psi CtauCinfty kernel Poisson}
Let $\varphi\in C_{b}^\infty(\overline{\R_+^n})$, $\psi\in C_{b}^\infty({\R}^{n-1})$ be such that $\supp\varphi\cap(\supp\psi\times \{0\})=\emptyset$. Let ${k}\in C^\tau S^{m-1}_{1,0}(\overline{\R^n_+}\times\R^{n-1};\mathcal{S}_+)$ be the symbol--kernel of a Poisson operator with $C^\tau$--coefficients. Then the operator $\varphi(x){k}(x,D_{x'})\psi(x')$ has a $C^\tau - C^\infty$--kernel.
\end{prop}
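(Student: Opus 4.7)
My strategy is to split $\varphi$ according to the distance of $x$ from the boundary, reducing the statement to Corollary~\ref{cor:phi k(x,Dx) CtauCinfty kernel} in the piece away from the boundary and to an oscillatory-integral argument parallel to Proposition~\ref{prop:kernel phi p psi} in the piece close to the boundary. First I would pick $\delta>0$ such that $|x-(y',0)|\geq\delta$ for all $x\in\supp\varphi$ and $y'\in\supp\psi$, then choose a cutoff $\omega\in C_b^\infty(\overline{\R_+})$ with $\omega\equiv 1$ on $[\delta/\sqrt{2},\infty)$ and $\omega\equiv 0$ on $[0,\delta/(2\sqrt{2})]$, and decompose $\varphi=\varphi_1+\varphi_2$ via $\varphi_1(x):=\varphi(x)\omega(x_n)$ and $\varphi_2(x):=\varphi(x)(1-\omega(x_n))$. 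It will then suffice to treat the two summands of $\varphi(x)k(x,D_{x'})\psi(x')=\varphi_1(x)k(x,D_{x'})\psi(x')+\varphi_2(x)k(x,D_{x'})\psi(x')$ separately.

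For the $\varphi_1$-piece, $\dist(\supp\varphi_1,\partial\overline{\R_+^n})\geq\delta/(2\sqrt{2})>0$, so Corollary~\ref{cor:phi k(x,Dx) CtauCinfty kernel} supplies a $C^\tau - C^\infty$--kernel $K_1(x,z',y_n)$ for $\varphi_1(x)k(x,D_{x'})$. Composing with multiplication by $\psi(x')$ yields
\[
\varphi_1(x)k(x,D_{x'})\psi(x')v(x)=\int_{\R^{n-1}}K_1(x,x'-y',x_n)\psi(y')v(y')\,dy',
\]
so the natural candidate for the resulting convolution kernel is $\tilde K_1(x,z',y_n):=K_1(x,z',y_n)\psi(x'-z')$. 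Since $\psi\in C_b^\infty(\R^{n-1})$, the Leibniz rule applied to $\partial_{z'}^\alpha$ and the uniform bounds on all derivatives of $\psi$ will imply that $\tilde K_1$ inherits the required estimates from $K_1$.

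For the $\varphi_2$-piece, the combination $x_n\leq\delta/\sqrt{2}$ together with $|x-(y',0)|\geq\delta$ forces $|x'-y'|\geq\delta/\sqrt{2}$ whenever $\varphi_2(x)\psi(y')\neq 0$, which is precisely the separation condition needed to repeat the trick from the proof of Proposition~\ref{prop:kernel phi p psi}. Inserting $|x'-y'|^{-2j}|x'-y'|^{2j}=1$, using $|x'-y'|^{2j}e^{i(x'-y')\cdot\xi'}=(-\Delta_{\xi'})^j e^{i(x'-y')\cdot\xi'}$, and integrating by parts in $\xi'$, for $j$ sufficiently large I expect to obtain the absolutely convergent representation
\[
\varphi_2(x)k(x,D_{x'})\psi(x')v(x)=\int_{\R^{n-1}}\tilde K_k^{(j)}(x,x'-y',x_n)\,|x'-y'|^{-2j}\varphi_2(x)\psi(y')v(y')\,dy'
\]
with $\tilde K_k^{(j)}(x,z',y_n):=\int_{\R^{n-1}}e^{iz'\cdot\xi'}(-\Delta_{\xi'})^j k(x,\xi',y_n)\,\dbar\xi'$, and to set $\tilde K_2(x,z',y_n):=\tilde K_k^{(j)}(x,z',y_n)|z'|^{-2j}\varphi_2(x)\psi(x'-z')$, with the supports of $\varphi_2$ and $\psi(x'-z')$ annihilating the singularity at $z'=0$ and making the expression independent of $j$.

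The main obstacle will be verifying the Poisson $C^\tau - C^\infty$--kernel estimates from Definition~\ref{def:Green op CtauCinfty kernel} for $\tilde K_2$: using the pointwise-in-$y_n$ form of the symbol-kernel bounds from Definition~\ref{def:poisson trace Ctau} (recalled in the remark just above Lemma~\ref{lem:eta k has Ctau Cinfty kernel}), one has $\|y_n^l\partial_{y_n}^{l'}\partial_{\xi'}^\alpha k(\cdot,\xi',y_n)\|_{C^\tau(U)}\leq C\langle\xi'\rangle^{m+1-l+l'-|\alpha|}$ uniformly in $y_n\geq 0$, and upon enlarging $j$ as a function of $(\alpha,\beta,l,l')$ we get $y_n^l\partial_{y_n}^{l'}\partial_{z'}^\alpha\tilde K_k^{(j)}$ bounded in $C^\tau_x$ uniformly in $(z',y_n)$. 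The factor $|z'|^{-2j}$, nonzero only for $|z'|\geq\delta/\sqrt{2}$, then supplies the arbitrary power decay in $z'$ needed to absorb any $(z')^\beta$, exactly as in the final claim of the proof of Proposition~\ref{prop:kernel phi p psi}; the new source of care relative to that proof is the propagation of the extra $y_n^l\partial_{y_n}^{l'}$ weights through the integration-by-parts step without deterioration of the $C^\tau_x$-regularity. Assembling the two pieces, $\tilde K:=\tilde K_1+\tilde K_2$ will be the sought $C^\tau - C^\infty$--kernel of $\varphi(x)k(x,D_{x'})\psi(x')$.
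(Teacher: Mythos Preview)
Your proposal is correct and uses the same overall decomposition as the paper: split $\varphi$ via a cutoff in $x_n$ into a piece away from the boundary (handled by Corollary~\ref{cor:phi k(x,Dx) CtauCinfty kernel}) and a piece near the boundary where the tangential supports separate. The only real difference is in how the near-boundary piece is dispatched. You redo the oscillatory-integral argument of Proposition~\ref{prop:kernel phi p psi} by hand, carrying the extra $y_n^l\partial_{y_n}^{l'}$ weights through the integration by parts. The paper instead inserts an auxiliary tangential cutoff $\widetilde{\psi}\in C_b^\infty(\R^{n-1})$ with $\widetilde{\psi}\equiv 1$ on the projection of $\supp(\varphi\eta)$ and $\supp\widetilde{\psi}\cap\supp\psi=\emptyset$, writes
\[
(\varphi\eta)(x)\,k(x,D_{x'})\,\psi(x')=(\varphi\eta)(x)\,\widetilde{\psi}(x')\,\Op'\bigl(k(x,\xi',D_n)\bigr)\,\psi(x'),
\]
and applies Proposition~\ref{prop:kernel phi p psi} directly to $\widetilde{\psi}\,\Op'(k)\,\psi$ as an $(n-1)$-dimensional pseudodifferential operator with operator-valued symbol $k(x,\xi',D_n)\in\mathcal{L}(\C,L^2(\R_+))$. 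The paper's route is shorter because no estimate has to be redone; your route is more self-contained and makes the $y_n$-dependence explicit rather than absorbing it into the target Banach space.
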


\begin{proof}
 Choose $\delta>0$ so small that 
\[
 P_{\partial\overline{\R_+^n}}\big(\supp\varphi \cap({\R}^{n-1}\times[0,\delta])\big)\cap\supp\psi=\emptyset,
\]
where $P_{\partial\overline{\R_+^n}}$ denotes the orthogonal projection of $\overline{\R_+^n}$ onto ${\partial\overline{\R_+^n}}$
Set $A:=P_{\partial\overline{\R_+^n}}\big(\supp\varphi \cap({\R}^{n-1}\times[0,\delta])\big)$. Let $\eta\in C_{b}^\infty(\overline{\R_+})$ be such that $\eta\equiv1$ on $[0,\delta/2]$, and $\supp\eta\subseteq[0,\delta]$. One can write
\begin{align*}
 &\varphi(x) k(x,D_{x'})\psi(x')=
(\varphi(x)\eta(x_n))k(x,D_{x'})\psi(x') + \varphi(x)(1-\eta(x_n))k(x,D_{x'})\psi(x').
\end{align*}
Since $1-\eta\equiv0$ on $[0,\delta/2]$, by Corollary \ref{cor:phi k(x,Dx) CtauCinfty kernel} $\varphi(x)(1-\eta(x_n))k(x,D_{x'})$ has a $C^\tau - C^\infty$--kernel and therefore the second term on the right hand side has a $C^\tau - C^\infty$--kernel. Now choose $\widetilde{\psi}\in C_{b}^\infty({\R}^{n-1})$ such that $\widetilde{\psi}\equiv1$ on $A$ and $\supp\widetilde{\psi}\cap\supp\psi=\emptyset$. Then by  Proposition \ref{prop:kernel phi p psi}, the operator 
\begin{align*}
 \widetilde{\psi}(x')k(x,D_{x'})\psi(x')&=\widetilde{\psi}(x')\Op'\left(k(x,\xi',D_n)\right)\psi(x')
=\Op'\left(\widetilde{\psi}(x')k(x,\xi',D_n)\psi(x')\right)
\end{align*}
has a $C^\tau - C^\infty$--kernel. Thus,
\[
 (\varphi(x)\eta(x_n))k(x,D_{x'})\psi(x')=(\varphi(x)\eta(x_n))\widetilde{\psi}(x')k(x,D_{x'})\psi(x')
\]
has a $C^\tau - C^\infty$--kernel because of Lemma~\ref{lem:eta k has Ctau Cinfty kernel}.
\end{proof}

\subsection{Nonsmooth Trace Operators}

\begin{corol}
\label{cor:t(x,Dx)phi CtauCinfty kernel}
Let $\varphi\in C_{b}^\infty(\overline{\R_+^n})$ be such that $\dist(\supp\varphi,\partial\overline{\R_+^n})>0$. Let ${t}(x',\xi',y_n)\in C^\tau S^{m}_{1,0}(\R^{n-1}\times\R^{n-1};\mathcal{S}_+)$ be the symbol--kernel of a trace operator with $C^\tau$--coefficients. Then ${t}(x',D_x)(\varphi\, \cdot)(x')$ has a $C^\tau - C^\infty$--kernel.
\end{corol}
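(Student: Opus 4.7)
The plan is to mirror the argument of Corollary \ref{cor:phi k(x,Dx) CtauCinfty kernel}, with the cut-off now acting on the input rather than the output of the operator. Set $\delta:=\dist(\supp\varphi,\partial\overline{\R^n_+})>0$ and choose $\tilde\eta\in C_b^\infty(\overline{\R_+})$ with $\tilde\eta\equiv 0$ on $[0,\delta/2]$ and $\tilde\eta\equiv 1$ on $[\delta,+\infty)$, so that $\tilde\eta(y_n)\varphi(y',y_n)=\varphi(y',y_n)$ for every $y=(y',y_n)\in\overline{\R^n_+}$.

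The first step is to reduce to an operator of order $-\infty$. The modified symbol--kernel $\tilde t(x',\xi',y_n):=\tilde\eta(y_n)\,t(x',\xi',y_n)$ lies in $C^\tau S^{-\infty}_{1,0}(\R^{n-1}\times\R^{n-1};\mathcal{S}_+)$ by Lemma \ref{lem:eta k has Ctau Cinfty kernel}. Since $\tilde\eta$ depends only on $y_n$, it commutes past the partial Fourier transform in $y'$, and reading off the definition of a trace operator gives
\[
 t(x',D_x)(\varphi f)(x')=\tilde t(x',D_x)(\varphi f)(x') \qquad \text{for all } f\in\mathcal{S}(\overline{\R^n_+}),
\]
because $\tilde\eta\,\varphi=\varphi$.

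Next I apply Lemma \ref{lem:green op order -infty has CtauCinfty kernel} to $\tilde t$: it yields a $C^\tau-C^\infty$--kernel $\tilde K_t(x',z',y_n)$ for $\tilde t(x',D_x)$ in the sense of Definition \ref{def:Green op CtauCinfty kernel}(3). Substituting this representation gives
\[
 t(x',D_x)(\varphi f)(x')=\int_{\R^{n-1}}\int_{\R_+}\tilde K_t(x',x'-y',y_n)\,\varphi(y',y_n)\,f(y',y_n)\,dy_n\,dy',
\]
so the natural candidate kernel is $K(x',z',y_n):=\tilde K_t(x',z',y_n)\,\varphi(x'-z',y_n)$.

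The only technical step left is to verify that $K$ satisfies the uniform estimates in Definition \ref{def:Green op CtauCinfty kernel}(3). Applying Leibniz to $\partial_{z'}^\alpha\, y_n^l\partial_{y_n}^{l'}K$ produces a finite sum whose terms split as $[y_n^l\partial_{z'}^{\alpha_1}\partial_{y_n}^{l_1'}\tilde K_t(x',z',y_n)]\cdot[\partial_{z'}^{\alpha_2}\partial_{y_n}^{l_2'}\varphi(x'-z',y_n)]$ with $\alpha_1+\alpha_2=\alpha$ and $l_1'+l_2'=l'$. After multiplication by $(z')^\beta$, the first factor is bounded in $C^\tau_{x'}(\R^{n-1})$ uniformly in $(z',y_n)$ by the estimates on $\tilde K_t$, while the second is bounded in $C^\tau_{x'}(\R^{n-1})$ uniformly because $\varphi\in C_b^\infty(\overline{\R^n_+})$ has all derivatives globally Lipschitz. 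The Banach--algebra property of $C^\tau$ then closes the estimate. I expect this Leibniz bookkeeping to be the main, though essentially mechanical, obstacle.
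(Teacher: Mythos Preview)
Your proposal is correct and follows essentially the same route as the paper: cut off in the $y_n$-variable by $\tilde\eta$, observe that $\tilde\eta(y_n)t(x',\xi',y_n)\in C^\tau S^{-\infty}_{1,0}(\mathcal{S}_+)$, and invoke Lemma~\ref{lem:green op order -infty has CtauCinfty kernel}. Two minor remarks: (i) what you cite as Lemma~\ref{lem:eta k has Ctau Cinfty kernel} is really the intermediate step in its proof (the lemma's statement concludes with a $C^\tau$--$C^\infty$-kernel, not order $-\infty$), so phrase it as ``as in the proof of Lemma~\ref{lem:eta k has Ctau Cinfty kernel}''; (ii) the paper additionally records a reduction from general class $r$ to class $0$, which you may omit since the stated hypothesis already gives a class-$0$ symbol--kernel. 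Your explicit Leibniz verification for $K(x',z',y_n)=\tilde K_t(x',z',y_n)\varphi(x'-z',y_n)$ is a detail the paper leaves implicit.
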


\begin{proof}
 Because of \eqref{eq:higher class trace operators}
\[
 {t}(x',D_x)(\varphi)(x')={t}_0(x',D_x)(\varphi)(x')+\sum_{j=0}^{r-1}s_j(x',D_{x'})(\gamma_j\varphi)(x'),
\]
and since all the terms in the sum of the right hand side vanish, we can assume without loss of generality that ${t}(x',D_x)$ is of class 0. Then we have for all $f\in\mathcal{S}(\overline{\R_+^n})$
\begin{equation*}
 {t}(x',D_x)(\varphi f)(x')=\mathcal{F}^{-1}_{\xi'\mapsto x'}\left[\int_0^\infty{t}(x',\xi',y_n)\mathcal{F}_{x'\mapsto\xi'}[\varphi f(\cdot,y_n)]\,dy_n\right].
\end{equation*}
As before, since there exists $\delta>0$ such that $\mathcal{F}_{x'\mapsto\xi'}[\varphi f(\cdot,y_n)]=0$ if $y_n\leq\delta$, choosing $\eta\in C_{b}^\infty(\overline{\R_+})$ such that $\eta\equiv0$ if $|y_n|\leq\frac{\delta}{2}$, and $\eta\equiv1$ if $|y_n|\geq\delta$, we can write the right hand side of the previous equation as 
\begin{equation*}
 \mathcal{F}^{-1}_{\xi'\mapsto x'}\left[\int_0^\infty{t}(x',\xi',y_n)\eta(y_n)\mathcal{F}_{x'\mapsto\xi'}[\varphi f(\cdot,y_n)]\,dy_n\right]={t}_1(x',D_x)(\varphi f)(x').
\end{equation*}
where $t_1(x',\xi',y_n):={t}(x',\xi',y_n)\eta(y_n)$, and as in Lemma \ref{lem:eta k has Ctau Cinfty kernel} one shows that $t_1(x',D_x)$ has a $C^\tau - C^\infty$--kernel and therefore ${t}(x',D_x)\varphi$ has a $C^\tau - C^\infty$--kernel.
\end{proof}

\begin{prop}
\label{prop:phi k(x,Dx)psi CtauCinfty kernel trace}
Let $\varphi\in C_{b}^\infty(\overline{\R_+^n})$, $\psi\in C_{b}^\infty({\R}^{n-1})$ be such that $\supp\varphi\cap(\supp\psi\times \{0\})=\emptyset$. Let ${t}\in C^\tau S^{m}_{1,0}(\R^{n-1}\times\R^{n-1};\mathcal{S}_+)$ be the symbol--kernel of a trace operator with $C^\tau$--coefficients. Then the operator $\psi(x'){t}(x',D_x)\varphi(x)$ has a $C^\tau - C^\infty$--kernel.
\end{prop}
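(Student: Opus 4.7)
The proof follows the same scheme as Proposition~\ref{prop:phi k(x,Dx)psi CtauCinfty kernel Poisson}, with the Poisson symbol--kernel replaced by the trace one. The plan is first to choose $\delta>0$ so small that $A:=P_{\partial\overline{\R_+^n}}\bigl(\supp\varphi\cap(\R^{n-1}\times[0,\delta])\bigr)$ satisfies $A\cap\supp\psi=\emptyset$, and to fix $\eta\in C_b^\infty(\overline{\R_+})$ with $\eta\equiv 1$ on $[0,\delta/2]$ and $\supp\eta\subseteq[0,\delta]$, so as to split
\[
\psi(x')t(x',D_x)\varphi(x)=\psi(x')t(x',D_x)\varphi(x)(1-\eta(x_n))+\psi(x')t(x',D_x)\varphi(x)\eta(x_n).
\]
The first summand is handled at once: since $\supp(\varphi(1-\eta))$ has distance $\geq\delta/2$ from $\partial\overline{\R_+^n}$, Corollary~\ref{cor:t(x,Dx)phi CtauCinfty kernel} yields that $t(x',D_x)\varphi(x)(1-\eta(x_n))$ has a $C^\tau-C^\infty$--kernel, and left multiplication by the $C_b^\infty$-function $\psi(x')$ preserves the estimates of Definition~\ref{def:Green op CtauCinfty kernel}~(iii) via the Leibniz rule.

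For the second summand I would choose $\widetilde\psi\in C_b^\infty(\R^{n-1})$ with $\widetilde\psi\equiv 1$ on $A$ and $\supp\widetilde\psi\cap\supp\psi=\emptyset$; since $\widetilde\psi(x')=1$ on the $x'$-projection of $\supp(\varphi\eta)$, one has $\varphi(x)\eta(x_n)=\widetilde\psi(x')\varphi(x)\eta(x_n)$, hence
\[
\psi(x')t(x',D_x)\varphi(x)\eta(x_n)=\bigl(\psi(x')t(x',D_x)\widetilde\psi(x')\bigr)\,\varphi(x)\eta(x_n),
\]
interpreted as the composition of $\psi(x')t(x',D_x)\widetilde\psi(x')$ with multiplication by $\varphi(x)\eta(x_n)$. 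Right multiplication by a $C_b^\infty$-function preserves the trace $C^\tau-C^\infty$--kernel structure (the new kernel is $K(x',z',y_n)\varphi(x'-z',y_n)\eta(y_n)$, whose $(z')^\beta\partial_{z'}^\alpha y_n^l\partial_{y_n}^{l'}$-derivatives remain bounded in $C^\tau_{x'}$ by Leibniz), so it suffices to show that $\psi(x')t(x',D_x)\widetilde\psi(x')$ itself has a $C^\tau-C^\infty$--kernel.

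This last claim is the trace-operator analog of Proposition~\ref{prop:kernel phi p psi} and is the main (but routine) technical step. Writing
\[
\psi(x')t(x',D_x)\widetilde\psi(x')u(x')=\iiint e^{i(x'-y')\xi'}\psi(x')t(x',\xi',y_n)\widetilde\psi(y')u(y',y_n)\,dy'\,dy_n\,\dbar\xi'
\]
as an oscillatory integral, the disjoint-support condition forces $|x'-y'|\geq\delta'>0$ on the integrand's support. Applying $|x'-y'|^{2k}e^{i(x'-y')\xi'}=(-\Delta_{\xi'})^k e^{i(x'-y')\xi'}$ and integrating by parts $k$ times in $\xi'$ yields, for $k$ sufficiently large, an absolutely convergent kernel representation with
\[
K(x',z',y_n)=\psi(x')\widetilde\psi(x'-z')|z'|^{-2k}\int_{\R^{n-1}}e^{iz'\xi'}(-\Delta_{\xi'})^k t(x',\xi',y_n)\,\dbar\xi'.
\]
The required estimates on $(z')^\beta\partial_{z'}^\alpha y_n^l\partial_{y_n}^{l'}K(\cdot,z',y_n)$ in $C^\tau_{x'}(\R^{n-1})$ then follow by the Leibniz rule, the trace symbol--kernel bounds in their pointwise-in-$y_n$ equivalent form, and the fact that $|z'|\geq\delta'$ on the support of $\psi(x')\widetilde\psi(x'-z')$; one chooses $k$ large depending on $\alpha,\beta,l,l'$ to secure absolute convergence of the $\xi'$-integral. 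The $y_n$-variable and the weights $y_n^l\partial_{y_n}^{l'}$ simply pass through the integration by parts as additional parameters, exactly as in Claim~\ref{claim:tilde K_k is Cinfty}.
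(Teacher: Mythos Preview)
Your proof is correct and follows essentially the same route as the paper: the same $\delta$, $\eta$, $\widetilde\psi$ and the same splitting, with Corollary~\ref{cor:t(x,Dx)phi CtauCinfty kernel} handling the $(1-\eta)$-piece and the disjoint-tangential-supports argument handling the $\eta$-piece. The only cosmetic difference is that the paper disposes of $\psi(x')t(x',D_x)\widetilde\psi(x')$ by a direct appeal to Proposition~\ref{prop:kernel phi p psi} (applied to the $\Op'$ representation with operator-valued symbol), whereas you spell out that oscillatory-integral computation explicitly.
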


\begin{proof}
We choose $\delta>0$ so small that 
\[
 P_{\partial\overline{\R_+^n}}\big(\supp\varphi \cap({\R}^{n-1}\times[0,\delta])\big)\cap\supp\psi=\emptyset,
\]
where as before $P_{\partial\overline{\R_+^n}}$ denotes the orthogonal projection of $\overline{\R_+^n}$ onto ${\partial\overline{\R_+^n}}$.
Moreover, we set $A:=P_{\partial\overline{\R_+^n}}\big(\supp\varphi \cap({\R}^{n-1}\times[0,\delta])\big)$. Let $\eta\in C_{b}^\infty(\overline{\R_+})$ be such that $\eta\equiv1$ on $[0,\delta/2]$, and $\supp\eta\subseteq[0,\delta]$. Then
\begin{align*}
 &\psi(x') t(x',D_x)\varphi(x)=
 \psi(x') t(x',D_x)(\eta(x_n)\varphi(x)) + \psi(x') t(x',D_x)((1-\eta(x_n))\varphi(x)).
\end{align*}
Since $1-\eta\equiv0$ on $[0,\delta/2]$,  the second term on the right hand side has a $C^\tau - C^\infty$--kernel by the same arguments as in Corollary \ref{cor:t(x,Dx)phi CtauCinfty kernel}. As in Proposition \ref{prop:phi k(x,Dx)psi CtauCinfty kernel Poisson}, choosing $\widetilde{\psi}\in C_{b}^\infty({\R}^{n-1})$ such that $\widetilde{\psi}\equiv1$ on $A$ and $\supp\widetilde{\psi}\cap\supp\psi=\emptyset$, the first term can be written as $\psi(x') t(x',D_x)\widetilde{\psi}(x')(\eta(x_n)\varphi(x))$ which also has a $C^\tau - C^\infty$--kernel because of Proposition \ref{prop:kernel phi p psi}.
\end{proof}

\subsection{Nonsmooth Singular Green Operators}

\begin{lemma}\label{lem:CtauInftyKernelG}
Let $\varphi\in C_{b}^\infty(\overline{\R_+^n})$ be such that $\dist(\supp\varphi,\partial\overline{\R_+^n})>0$, and let ${g}\in C^\tau S^{m}_{1,0}(\overline{\R_+^n}\times\R^{n-1};\mathcal{S}_{++})$ be the symbol--kernel of a singular Green operator with $C^\tau$--coefficients $g(x,D_x)$. Then $\varphi(x){g}(x,D_x)$ and ${g}(x,D_x)\varphi(x)$  have a $C^\tau - C^\infty$--kernel.
\end{lemma}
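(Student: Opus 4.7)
The plan is to extend Lemma \ref{lem:eta k has Ctau Cinfty kernel} to singular Green operators by absorbing $\varphi$ into a cutoff in the $y_n$- or $w_n$-slot of the symbol-kernel and then invoking Lemma \ref{lem:green op order -infty has CtauCinfty kernel}. Since $\dist(\supp\varphi,\partial\overline{\R_+^n})>0$, pick $\delta>0$ with $\supp\varphi\subseteq\{x_n\geq\delta\}$ and an auxiliary cutoff $\eta\in C_b^\infty(\overline{\R_+})$ satisfying $\eta\equiv 0$ on $[0,\delta/2]$ and $\eta\equiv 1$ on $[\delta,\infty)$, so that $\eta(x_n)\varphi(x)=\varphi(x)$.

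For $\varphi(x) g(x,D_x)$, write $\varphi(x) g(x,D_x) = \varphi(x)\cdot[\eta(x_n) g(x,D_x)]$. Since $\eta$ depends only on $x_n$ and the $y_n$-slot of $g$ is evaluated at $x_n$ in the action, the product $\eta(x_n) g(x,D_x)$ is again a singular Green operator, now with symbol-kernel $\tilde g(x,\xi',y_n,w_n):=\eta(y_n)g(x,\xi',y_n,w_n)$. On $\supp\eta(y_n)$ we have $y_n\geq\delta/2$, so for every $N\in\N$ the trivial estimate $1\leq (2/\delta)^N y_n^N$ combined with
\begin{equation*}
\bigl\|y_n^{k+N}\partial_{y_n}^{k'}w_n^l\partial_{w_n}^{l'}\partial_{\xi'}^\alpha g(\cdot,\xi',\cdot,\cdot)\bigr\|_{C^\tau(\overline{\R^n_+}; L^2_{y_n,w_n}(\R_{++}^2))}\leq C\langle\xi'\rangle^{m+1-k-N+k'-l+l'-|\alpha|}
\end{equation*}
and Leibniz applied to $\partial_{y_n}^{k'}(\eta(y_n)g)$ yields $\tilde g \in C^\tau S^{-\infty}_{1,0}(\mathcal{S}_{++})$. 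Lemma \ref{lem:green op order -infty has CtauCinfty kernel} therefore endows $\eta(x_n)g(x,D_x)$ with a $C^\tau - C^\infty$-kernel $K$, and multiplication by $\varphi\in C_b^\infty\hookrightarrow C^\tau$ on the left produces the kernel $\varphi(x)K(x,z',y_n,w_n)$, which still satisfies all the estimates of Definition \ref{def:Green op CtauCinfty kernel}.

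For $g(x,D_x) \varphi(x)$, the identity $\varphi=\eta(x_n)\varphi$ gives $g(x,D_x)\varphi(x)=[g(x,D_x)\eta(x_n)]\,\varphi(x)$. The operator $g(x,D_x)\eta(x_n)$ has symbol-kernel $g(x,\xi',y_n,w_n)\eta(w_n)$ — the factor $\eta(w_n)$ passes through the Fourier transform in $x'$ untouched — and the same order-$-\infty$ argument, now exploiting the decay in $w_n$, furnishes a $C^\tau - C^\infty$-kernel $K$. Composition with the multiplication operator $\varphi(x)$ on the right produces the kernel $\widetilde K(x,z',y_n,w_n):= K(x,z',y_n,w_n)\,\varphi(x'-z',w_n)$. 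The main obstacle is verifying that $\widetilde K$ still meets the estimates of Definition \ref{def:Green op CtauCinfty kernel}, since the factor $\varphi(x'-z',w_n)$ introduces a coupling between $x'$ and $z'$ that is absent from $K$. This is handled by Leibniz: each derivative of $\varphi(x'-z',w_n)$ in $z'$ or $w_n$ is bounded and its $C^\tau_{x'}$-norm is uniform in $(z',w_n)$, so expanding $(z')^\beta\partial_{z'}^\alpha\partial_{w_n}^{l'}\widetilde K$ by Leibniz transfers every $(z')^\beta$ onto a derivative of $K$ and the remaining factor is absorbed into the constant, yielding the required uniform bounds and completing the proof.
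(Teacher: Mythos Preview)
Your proof is correct and follows essentially the same approach as the paper, which simply refers to the analogous arguments for Poisson and trace operators in Corollaries~\ref{cor:phi k(x,Dx) CtauCinfty kernel} and~\ref{cor:t(x,Dx)phi CtauCinfty kernel}. Your treatment of the right-multiplication case is in fact more explicit than the paper's: where the paper simply asserts that the order-$-\infty$ operator composed with $\varphi$ on the right retains a $C^\tau$--$C^\infty$-kernel, you actually write down the resulting kernel $K(x,z',y_n,w_n)\varphi(x'-z',w_n)$ and verify the required estimates via Leibniz, which is a useful clarification.
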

\begin{proof}
The statements are proven in the same way as in the proofs of  Corollary \ref{cor:phi k(x,Dx) CtauCinfty kernel} and Corollary \ref{cor:t(x,Dx)phi CtauCinfty kernel}.
\end{proof}

\begin{prop}
\label{prop:phi k(x,Dx)psi CtauCinfty kernel sgo}
 Let $\varphi,\psi\in C_{b}^\infty(\overline{\R_+^n})$ be such that $\supp\varphi\cap\supp\psi\cap (\R^{n-1}\times \{0\})=\emptyset$. Let ${g}\in C^\tau S^{m}_{1,0}(\overline{\R_+^n}\times\R^{n-1};\mathcal{S}_{++})$ be the symbol--kernel of a singular Green operator with $C^\tau$--coefficients. Then the operator $\varphi(x){g}(x,D_x)\psi(x)$ has a $C^\tau - C^\infty$--kernel.
\end{prop}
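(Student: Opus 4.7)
The strategy parallels the proofs of Propositions \ref{prop:phi k(x,Dx)psi CtauCinfty kernel Poisson} and \ref{prop:phi k(x,Dx)psi CtauCinfty kernel trace}, adapted to the two-sided normal structure of singular Green operators. First choose $\delta>0$ small enough so that $P_{\partial\overline{\R_+^n}}(\supp\varphi\cap(\R^{n-1}\times[0,\delta]))$ and $P_{\partial\overline{\R_+^n}}(\supp\psi\cap(\R^{n-1}\times[0,\delta]))$ are disjoint, which is possible by the hypothesis on the boundary traces of the supports. Pick $\eta\in C_{b}^\infty(\overline{\R_+})$ with $\eta\equiv1$ on $[0,\delta/2]$ and $\supp\eta\subseteq[0,\delta]$, and decompose
\begin{equation*}
\varphi\, g(x,D_x)\psi = \varphi\eta(x_n)\cdot g(x,D_x)\cdot\eta(x_n)\psi + \varphi\eta(x_n)\cdot g(x,D_x)\cdot(1-\eta(x_n))\psi + \varphi(1-\eta(x_n))\cdot g(x,D_x)\cdot\psi.
\end{equation*}
In the second summand $(1-\eta(x_n))\psi$ has support with positive distance from $\partial\overline{\R_+^n}$, and in the third summand $\varphi(1-\eta(x_n))$ does; Lemma \ref{lem:CtauInftyKernelG} then yields $C^\tau$--$C^\infty$ kernels for $g(x,D_x)\cdot(1-\eta(x_n))\psi$ and for $\varphi(1-\eta(x_n))\cdot g(x,D_x)$, and multiplication by the remaining smooth factor on the other side preserves this property.

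To treat the first summand, set $A:=P_{\partial\overline{\R_+^n}}(\supp\varphi\cap(\R^{n-1}\times[0,\delta]))$ and $B:=P_{\partial\overline{\R_+^n}}(\supp\psi\cap(\R^{n-1}\times[0,\delta]))$, and choose $\widetilde\psi\in C_{b}^\infty(\R^{n-1})$ with $\widetilde\psi\equiv 1$ on $A$ and $\supp\widetilde\psi\cap B=\emptyset$. Since $\widetilde\psi(x')=1$ on the $x'$-projection of $\supp(\varphi(x)\eta(x_n))\subseteq A$, one has $\varphi(x)\eta(x_n)=\varphi(x)\eta(x_n)\widetilde\psi(x')$, so it suffices to show that
\begin{equation*}
T := \widetilde\psi(x')\cdot g(x,D_x)\cdot\eta(x_n)\psi(x)
\end{equation*}
has a $C^\tau$--$C^\infty$ kernel; multiplication from the left by $\varphi(x)\eta(x_n)\in C_{b}^\infty$ then preserves this property. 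A direct computation expresses the kernel of $T$, in difference form with $z':=x'-y'$, as
\begin{equation*}
\widetilde K(x,z',x_n,w_n) = \widetilde\psi(x')\,\eta(w_n)\,\psi(x'-z',w_n)\int_{\R^{n-1}}e^{iz'\cdot\xi'}g(x,\xi',x_n,w_n)\,\dbar\xi',
\end{equation*}
interpreted as an oscillatory integral. The support conditions force $|z'|\geq \delta_0:=\dist(\supp\widetilde\psi,B)>0$ on the support of the prefactor.

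The analysis of $\widetilde K$ now follows the template of Proposition \ref{prop:kernel phi p psi}: multiplying and dividing by $|z'|^{2k}$ and using the identity $|z'|^{2k}e^{iz'\cdot\xi'}=(-\Delta_{\xi'})^k e^{iz'\cdot\xi'}$ with integration by parts in $\xi'$ yields
\begin{equation*}
\widetilde K(x,z',x_n,w_n)=\widetilde\psi(x')\eta(w_n)\psi(x'-z',w_n)|z'|^{-2k}\int_{\R^{n-1}}e^{iz'\cdot\xi'}(-\Delta_{\xi'})^k g(x,\xi',x_n,w_n)\,\dbar\xi'.
\end{equation*}
The prefactor is $C^\infty_b$ on its support (thanks to $|z'|\geq\delta_0$ and the smoothness of $\widetilde\psi$, $\eta$, $\psi$), and for $k$ large enough the integrand is absolutely integrable by the symbol-kernel bounds of Definition \ref{def:sgo Ctau}. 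Further differentiations in $z',x,w_n$ and multiplications by the factors $y_n^k\partial_{y_n}^{k'}w_n^l\partial_{w_n}^{l'}$, combined with additional integrations by parts when needed, yield the required estimates
\begin{equation*}
\sup_{z',y_n,w_n}\left\|(z')^\beta\partial_{z'}^\alpha y_n^k\partial_{y_n}^{k'}w_n^l\partial_{w_n}^{l'}\widetilde K(\cdot,z',y_n,w_n)\right\|_{C^\tau_x(\overline{\R_+^n})} < \infty
\end{equation*}
for all $\alpha,\beta\in\N^{n-1}$ and $k,k',l,l'\in\N$. The main bookkeeping obstacle is tracking how the Leibniz rule distributes the $y_n,w_n$ derivatives between the $C^\infty_b$ prefactor and the $g$-integrand; however, all resulting terms have the same structure, and each is controlled by the symbol-kernel norms of Definition \ref{def:sgo Ctau} exactly as in the proof of Lemma \ref{lem:green op order -infty has CtauCinfty kernel}, making the final step routine.
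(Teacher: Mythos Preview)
Your proof is correct and follows the same three-term decomposition via the cutoff $\eta(x_n)$ that the paper uses; the treatment of the second and third summands via Lemma~\ref{lem:CtauInftyKernelG} is identical. The only difference is in the first summand: the paper dispatches it in one line by invoking Proposition~\ref{prop:kernel phi p psi} (applied to the operator-valued tangential symbol $g(x,\xi',D_n)$, using that the $x'$-supports of $\varphi\eta$ and $\eta\psi$ are disjoint), whereas you introduce the auxiliary cutoff $\widetilde\psi(x')$ and then carry out the integration-by-parts argument of Proposition~\ref{prop:kernel phi p psi} by hand in the $(z',y_n,w_n)$ variables. Your route is a bit longer but has the advantage of being self-contained and not requiring the reader to reinterpret Proposition~\ref{prop:kernel phi p psi} for symbols on $\overline{\R^n_+}\times\R^{n-1}$ rather than $\R^n\times\R^n$.
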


\begin{proof}
 Let $\delta>0$ be sufficiently small and let $\eta\in C_{b}^\infty(\overline{\R_+})$ be such that $\eta\equiv1$ on $[0,\delta/2]$,  $\supp\eta\subseteq[0,\delta]$ and
\[
 P_{\partial\overline{\R_+^n}}(\supp(\varphi\eta))\cap P_{\partial\overline{\R_+^n}}(\supp(\eta\psi))=\emptyset.
\]
One can write
\begin{align*}
 &\varphi(x) g(x,D_x)\psi(x)
 =\varphi(x)\eta(x_n) g(x,D_x)\psi(x) + \varphi(x)(1-\eta(x_n))g(x,D_x)\psi(x) \\
 &\ =\varphi(x)\eta(x_n) g(x,D_x)(\eta(x_n)\psi(x)) + \varphi(x)\eta(x_n) g(x,D_x)(1-\eta(x_n))\psi(x) \\
 &\ \ \ + \varphi(x)(1-\eta(x_n))g(x,D_x)\psi(x).
\end{align*}
Each term in this sum has a $C^\tau - C^\infty$--kernel because of Proposition \ref{prop:kernel phi p psi} and Lemma~\ref{lem:CtauInftyKernelG}, respectively.
\end{proof}

\section{Transmission Condition}
\label{section:transmission condition}

The transmission condition is a condition on operators which allows to preserve regularity up to the boundary. 
In \cite[Definition 5.2]{NonsmoothGreen} a transmission condition for nonsmooth pseudodifferential operators is defined. See also Remark 5.3 in loc.\ cit.\ for a comparison with the definition given in \cite[Definition 2.2.4.]{FunctionalCalculus}, which is the one we use here.

\begin{defin}\label{defn:TransmissionCond}
Let $p\in C^\tau S^m_{1,0}(\Rn\times\Rn)$, $m\in\Z$.  Then $p$ satisfies the \emph{global transmission condition} {at $x_n=0$} -- simply called transmission condition in the following -- if there are functions $s_{k,\alpha}(x',\xi')$ smooth in $\xi'$  and in $C^\tau$ with respect to $x'$ such that for any $\alpha\in\N^n$ and $l\in \N$
\begin{equation}\label{eq:TransmissionCond}
  \left\|\xi_n^l D_{\xi}^{\alpha}p(\cdot,0,\xi)-\sum_{k=-l}^{m-|\alpha|} s_{k,\alpha} (\cdot,\xi')\xi_n^{k+l}\right\|_{C^\tau(\R^{n-1})}
  \leq C_{l,\alpha}
  \langle\xi'\rangle^{m+l+1-|\alpha|}|\xi_n|^{-1}
\end{equation}
{for some constants $C_{l,\alpha}$,} when $|\xi_n|\geq \langle\xi'\rangle$.
\end{defin}

The non--smooth transmission condition ensures natural mapping properties for the associated truncated pseudodifferential operators and a good behaviour under compositions \cite{NonsmoothGreen}.

In the following we show that the transmission condition is preserved under a suitable smooth coordinate change. We follow the strategy in the smooth case in \cite[Section 2.2]{FunctionalCalculus} and refer to that book for an introduction and results on the transmission condition in the smooth case.

For the following coordinate change we need $(x,y)$-symbols, also known as double symbols, which are nonsmooth in $x$ and smooth in $y$ similarly as in \cite{AbelsPfeufferCharacterization,Diss}:
\begin{defin}
Let $0<s <1$, $0\leq \rho \leq 1$, $k \in \N$ and $m\in \R$. Then $C^{k+s} S^m_{\rho, 0}(\Rn\times \Rn \times \Rn)$ is the set of all functions $p\colon\Rn \times\Rn \times \Rn \to \C$ such that
\begin{itemize}
\item[(i)] $\partial_\xi^{\alpha}  \partial^{\delta}_{y} p(\cdot,y,\xi) \in C^{k+s}_x(\R^n)$ and $\partial_x^{\beta} \partial_\xi^{\alpha} \partial^{\delta}_{y} p \in C^{0}(\R^n \times \R^n \times \R^n)$,
\item[(ii)] $\left\|\partial_\xi^{\alpha}  \partial^{\delta}_{y} p(\cdot,y,\xi)  \right\|_{C^{k+s}(\R^n)} \leq C_{\alpha, \delta} \langle\xi\rangle^{m-\rho|\alpha|}$ 
\end{itemize}
for all $y,\xi \in \Rn$ and arbitrary $\alpha, \beta, \delta \in \N^n$ with $|\beta| \leq k$. Here the constant $C_{\alpha,\delta}$ is independent of $y,\xi \in \Rn$. We also define the operator
\[
p(x,D_x,x)u(x):=\int_{\R^n}\int_{\R^n} e^{i(x-y)\xi}{p(x,y,\xi)}u(y)\,dy\,\dbar\xi,
\]
for all $u\in\mathcal{S}(\R^n)$, and
\begin{equation}
\label{eq:pL}
p_L(x,\xi):= {\rm Os-}\iint e^{-iy\cdot \eta} p(x,x+y,\xi+\eta)\, dy \, \dbar\eta.
\end{equation}
\end{defin}

\begin{thm}\label{thm:reduced left symbol}
Let $\tau>0$, $\tau \not \in\N$, and let $p \in C^\tau S^m_{1, 0}(\Rn\times \Rn \times \Rn)$. Then 
\begin{equation}\label{eq:Reduction}
 p_L(x,D_x)u= p(x,D_x,x) u\qquad \text{for every } u\in \mathcal{S}(\Rn),
\end{equation}
where $p_L\in C^\tau S^m_{1, 0}(\Rn\times \Rn)$\index{$p_L$} and 
\begin{equation}\label{eq:AsympExpansion}
 p_L(x,\xi)\sim \sum_{\alpha\in \N^n} \left.\frac1{\alpha!} \partial_\xi^\alpha D_y^\alpha p(x,y,\xi)\right|_{y=x}.
\end{equation}
\end{thm}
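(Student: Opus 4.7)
The conclusion is the classical reduction of a double symbol to a left symbol in pseudodifferential calculus, proved in the smooth setting in \cite[Chapter~2]{KumanoGo}, here adapted to the nonsmooth $C^\tau$-setting. Because $p(x,y,\xi)$ is smooth in $(y,\xi)$ and only the $x$-slot carries nonsmoothness, the natural approach is to view $x$ as a frozen parameter throughout the oscillatory-integral manipulations that define $p_L$ in \eqref{eq:pL} and to verify at each stage that the required $C^\tau_x$-seminorm bounds propagate through the estimates. In particular, every integration by parts will act on $y$ and $\eta$ only, where $p$ depends smoothly on its arguments, while Hölder regularity in the first slot is preserved as a parameter bound.

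\textbf{Main steps.} First, I would verify that \eqref{eq:pL} defines an element $p_L\in C^\tau S^m_{1,0}(\R^n\times\R^n)$. For fixed $(x,\xi)$, the integrand $(y,\eta)\mapsto p(x,x+y,\xi+\eta)$ belongs to the amplitude class $\mathcal{A}^m_{0,0}$ uniformly in the parameters, so the oscillatory integral is well defined; applying the standard regularizations $\langle y\rangle^{-2M}(1-\Delta_\eta)^M$ and $\langle\eta\rangle^{-2M'}(1-\Delta_y)^{M'}$ moves all derivatives onto the smooth slots of $p$ and yields
\[
\left\|\partial_\xi^\gamma p_L(\cdot,\xi)\right\|_{C^\tau(\R^n)}\leq C_\gamma\langle\xi\rangle^{m-|\gamma|}.
\]
Second, I would derive the asymptotic expansion by Taylor expanding $p$ in its third argument,
\[
p(x,x+y,\xi+\eta)=\sum_{|\alpha|<N}\frac{\eta^\alpha}{\alpha!}\partial_\xi^\alpha p(x,x+y,\xi)+r_N(x,y,\xi,\eta),
\]
with the usual integral remainder. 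For each polynomial term, the identity $\eta^\alpha e^{-iy\eta}=(-D_y)^\alpha e^{-iy\eta}$ together with integration by parts in $y$ and Fourier inversion $\iint e^{-iy\eta}f(y)\,dy\,\dbar\eta=f(0)$ gives
\[
\iint e^{-iy\eta}\frac{\eta^\alpha}{\alpha!}\partial_\xi^\alpha p(x,x+y,\xi)\,dy\,\dbar\eta=\frac{1}{\alpha!}\partial_\xi^\alpha D_y^\alpha p(x,y,\xi)\big|_{y=x},
\]
which is precisely the $\alpha$-th term in \eqref{eq:AsympExpansion}. The remainder $r_N$ is estimated by the same integration-by-parts scheme as in the first step, combined with the improved $\xi$-decay coming from the $N$-fold differentiation $\partial_\xi^\alpha p(x,x+y,\xi+t\eta)$ and the Peetre inequality $\langle\xi+t\eta\rangle^{m-N}\leq C\langle\eta\rangle^{|m-N|}\langle\xi\rangle^{m-N}$; the extra $\langle\eta\rangle$-factor is absorbed by additional integrations by parts in $y$, showing that the remainder symbol lies in $C^\tau S^{m-N}_{1,0}$. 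Third, to prove the identity \eqref{eq:Reduction}, I would unfold
\[
p_L(x,D_x)u(x)=\int e^{ix\xi}p_L(x,\xi)\widehat{u}(\xi)\,\dbar\xi,
\]
insert the definition of $p_L$, and change variables $\zeta=\xi+\eta$, $z=x+y$ to collapse the $\xi$-integration via Fourier inversion and recover $p(x,D_x,x)u(x)$; the formal manipulation is justified by inserting a Schwartz cut-off $\chi(\varepsilon y,\varepsilon\eta)$ and passing to the limit $\varepsilon\to 0^+$ using the uniform bounds established in the first step.

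\textbf{Main obstacle.} The delicate point is step two: controlling $C^\tau_x$ seminorms of the remainder $r_N$ uniformly in $(\xi,\eta)$ after the Taylor expansion. The iterated integration by parts must simultaneously preserve Hölder regularity in $x$ (forcing every derivative to land on the $y$- or $\eta$-slot, never on the $x$-slot), keep track of the shifted spectral variable $\xi+t\eta$ through Peetre-type estimates, and never differentiate in $x$ more than $[\tau]$ times. The required bookkeeping is in the same spirit as the smooth case treated in \cite{KumanoGo} and the nonsmooth single-slot case worked out in \cite{AbelsPfeufferCharacterization,Diss}; no new conceptual difficulty appears, only a careful tracking of the seminorms from Definition~\ref{def:Ctau symbols} throughout the oscillatory-integral estimates.
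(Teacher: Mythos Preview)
Your proposal is correct and follows essentially the same route as the paper: freeze $x$ as a parameter, carry out the smooth Kumano-Go reduction in the $(y,\eta)$-variables via Taylor expansion in $\eta$ and integration by parts, and verify that $C^\tau_x$-seminorms propagate through the oscillatory-integral estimates. The only difference is packaging: the paper delegates the verification that $p_L\in C^\tau S^m_{1,0}$ and the identity \eqref{eq:Reduction} to \cite[Theorems~2.11 and 4.15]{AbelsPfeufferCharacterization} (first obtaining $p_L\in C^\tau S^m_{0,0}$ and then upgrading to $S^m_{1,0}$ by applying the same result to $\partial_\xi^\alpha p$), whereas you sketch these directly; for the asymptotic expansion both arguments are identical, with the same integral remainder $r_\alpha(x,\xi,y,\eta)=\int_0^1\partial_\xi^\alpha p(x,x+y,\xi+\theta\eta)(1-\theta)^N\,d\theta$ handled by Peetre's inequality and further integration by parts in $y$.
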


\begin{proof}
First of all \cite[Theorem~4.15]{AbelsPfeufferCharacterization} implies that \eqref{eq:Reduction} holds true, where $p_L \in C^\tau S^m_{0,0}(\Rn\times \Rn)$. Moreover, because of \cite[Theorem~2.11]{AbelsPfeufferCharacterization}, 
\begin{equation*}
  \partial_\xi^\alpha p_L(x,\xi) = {\rm Os-}\iint e^{-iy\cdot \eta} \partial_\xi^{\alpha}p(x,x+y,\xi+\eta)\, dy \,\dbar\eta
\end{equation*}
for any $\alpha\in\N^n$, where $\partial_\xi^\alpha p\in C^\tau S^{m-|\alpha|}_{1,0}(\Rn\times \Rn\times \Rn)$. Hence applying \cite[Theorem~4.15]{AbelsPfeufferCharacterization} again yields that $\partial_\xi^\alpha p_L\in  C^\tau S^{m-|\alpha|}_{0,0}(\Rn\times \Rn)$ for every $\alpha\in\N^n$, which means that $p_L\in C^\tau S^m_{1,0}(\Rn\times \Rn)$. 

Finally, the asymptotic expansion \eqref{eq:AsympExpansion} is shown in the same way as for smooth double symbols $p\in S^m_{1,0}(\Rn\times \Rn\times \Rn)$ by using a Taylor series expansion of the integrand in \eqref{eq:pL} with respect to $x+y$ and integration by parts. Here the nonsmoothness of $p$ with respect to the first variable does not matter since only derivatives with respect to the second and third variables are involved and $p$ is smooth with respect to these variables. Moreover, also the oscillatory integral is only taken with respect to these variables. In this way for any $N\in\N$ we have
\begin{equation*}
  p_L(x,\xi)= \sum_{|\alpha|\leq N}\left. \frac1{\alpha!} \partial_\xi^\alpha D_y^\alpha p(x,y,\xi)\right|_{y=x} + r_N(x,\xi),
\end{equation*}
where
\begin{equation*}
  r_N(x,\xi)= (N+1)\sum_{|\alpha|\leq N+1} \frac1{\alpha!}{\rm Os-}\iint e^{-iy\cdot \eta} D_y^\alpha r_\alpha(x,\xi,y,\eta)\, dy \, \dbar\eta
\end{equation*}
and
\begin{equation*}
  r_\alpha (x,\xi,y,\eta)= \int_0^1 \partial_\xi^\alpha p(x,x+y,\xi+\theta\eta)(1-\theta)^N \, d\theta,
\end{equation*}
cf.\ e.g.\ \cite[Proof of Theorem 3.15]{Abels:PDOs}. Now one can estimate $r_\alpha$ in a straight forward manner (as in the smooth case) and derive together with \cite[Theorems~2.11 and 4.15]{AbelsPfeufferCharacterization} that $r_N\in C^\tau S^{m-N-1}_{1,0}(\Rn\times \Rn)$.
\end{proof}

For the following let $\kappa:\R^n\to \R^n$ be a bounded smooth diffeomorphism, $\kappa:=(\kappa_1,\ldots,\kappa_n)$, that preserves the sets $\partial \overline{\R^n_+}$ and $\R^n_+$, i.e, they are mapped to itself. 
In Theorem \ref{theorem1} we have seen that given $p\in C^\tau S^m_{1,0}(\Rn\times\Rn)$, the symbol of the operator given by Equation \eqref{eq:change of variables}
\begin{equation}\label{eq:transformPsDO}
 \widetilde{p}(x,D_x):=\kappa^{-1,*}p(x,D_x)\kappa^*
\end{equation}
belongs to $C^\tau S^m_{1,0}(\Rn\times\Rn)$. Moreover, in \cite[Theorem 2.2.13]{FunctionalCalculus} it is proven that in the case of operators with smooth coefficients, if the symbol $p$ of the operator $p(x,D_x)$ satisfies the transmission condition, then the symbol $\widetilde{p}$ of the operator $\widetilde{p}(x,D_x)$ also satisfies the transmission condition, i.e.\ the transmission condition is preserved under a smooth coordinate change. Now we will show that this proof can be adapted to the nonsmooth case.

\begin{thm}
  Let $\kappa$ be as before, $m\in\Z$ and $\tau>0$, $\tau\not\in\N,$ and $p\in C^\tau S^m_{1,0}(\Rn\times\Rn)$ satisfy the transmission condition at $x_n=0$. Then the transformed symbol $\widetilde{p}$ given by \eqref{eq:transformPsDO} also satisfies the transmission condition at $x_n=0$.
\end{thm}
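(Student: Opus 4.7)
The plan is to mimic the smooth-case argument of \cite[Theorem 2.2.13]{FunctionalCalculus}, using Theorem~\ref{thm:reduced left symbol} as the nonsmooth substitute for the reduction from double symbols to left symbols.

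First, I would rewrite $\widetilde{p}(x,D_x)$ as an operator with a double $(x,y)$-symbol. Starting from the oscillatory integral representation of $p(x,D_x)$ acting on $\kappa^{*}u$, performing the change of variables $y\mapsto\kappa^{-1}(y)$, using \eqref{eq:regular diffeo matrix} to write $\kappa^{-1}(x)-\kappa^{-1}(y)=M_{\kappa^{-1}}(x,y)(x-y)$, and then the linear substitution $\xi=M_{\kappa^{-1}}(x,y)^{T}\eta$ in the fiber variable, one arrives at
\[
 \widetilde{p}(x,D_x)u(x)={\rm Os-}\iint e^{i(x-y)\cdot\eta}\widetilde{q}(x,y,\eta)u(y)\,dy\,\dbar\eta,
\]
with
\[
 \widetilde{q}(x,y,\eta):=p\bigl(\kappa^{-1}(x),M_{\kappa^{-1}}(x,y)^{-T}\eta\bigr)\,J(x,y),
\]
where $J(x,y)$ is a bounded smooth Jacobian factor built from $|\det D\kappa^{-1}(y)|$ and $|\det M_{\kappa^{-1}}(x,y)|^{-1}$. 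Since $\kappa$ is a bounded smooth diffeomorphism, the nonsmoothness of $\widetilde{q}$ is located entirely in the factor $p(\kappa^{-1}(x),\cdot)$, so $\widetilde{q}\in C^\tau S^m_{1,0}(\Rn\times\Rn\times\Rn)$.

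Then Theorem~\ref{thm:reduced left symbol} yields $\widetilde{p}\in C^\tau S^m_{1,0}(\Rn\times\Rn)$ together with the asymptotic expansion
\[
 \widetilde{p}(x,\xi)\sim\sum_{\alpha\in\N^n}\frac{1}{\alpha!}\,\partial_\xi^\alpha D_y^\alpha \widetilde{q}(x,y,\xi)\bigr|_{y=x},
\]
and remainders $r_N\in C^\tau S^{m-N-1}_{1,0}(\Rn\times\Rn)$. For any fixed $l,\gamma$ arising in Definition~\ref{defn:TransmissionCond}, choosing $N$ large enough depending on $l$ and $|\gamma|$ makes $r_N$ fit trivially into the estimate~\eqref{eq:TransmissionCond}, so it suffices to verify the transmission condition at $x_n=0$ for each individual term $\widetilde{p}_\alpha(x,\xi):=\frac{1}{\alpha!}\partial_\xi^\alpha D_y^\alpha\widetilde{q}(x,y,\xi)|_{y=x}$ of the expansion.

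This last step is the expected main obstacle. Because $\kappa$ preserves $\{x_n=0\}$, one has $\kappa^{-1}_n(x',0)\equiv 0$, hence $D\kappa^{-1}(x',0)$ and $A(x'):=(D\kappa^{-1}(x',0))^{-T}$ are block triangular with respect to the splitting $\R^n=\R^{n-1}\times\R$, with corner entry $a_n(x'):=\partial_{x_n}\kappa_n(\kappa^{-1}(x',0))$ bounded away from zero. Concretely, the first $n-1$ components of $A(x')\xi$ depend only on $\xi'$ (through a smooth invertible $(n-1)\times(n-1)$ matrix), while the last component has the form $c(x')\cdot\xi'+a_n(x')\xi_n$ with $c\in C^\infty_b(\R^{n-1})^{n-1}$. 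Feeding this affine-in-$\xi$ substitution into the polynomial-plus-remainder expansion \eqref{eq:TransmissionCond} of $p(\kappa^{-1}(x',0),\cdot)$ and reorganizing in powers of $\xi_n$ produces the corresponding expansion for $\widetilde{p}_\alpha(x',0,\xi)$: by the binomial theorem the polynomial part becomes a polynomial in $\xi_n$ of the required range with coefficients $\widetilde{s}_{k,\gamma}(x',\xi')$ smooth in $\xi'$ and $C^\tau$ in $x'$ (since $A$, $J$ and their inverses are smooth and bounded), and the remainder estimate survives because the regime $|\xi_n|\geq\langle\xi'\rangle$ is mapped into a comparable regime for the transformed fiber variable. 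The derivatives $\partial_\xi^\alpha D_y^\alpha$ appearing in the definition of $\widetilde{p}_\alpha$ only lower the effective order and produce further smooth coefficient factors coming from $J$ and $M_{\kappa^{-1}}$, so the combinatorial bookkeeping for the coefficients $\widetilde{s}_{k,\gamma}$ follows exactly the lines of \cite[Section~2.2]{FunctionalCalculus}, the only substantive change being that the uniform $C^\infty_b$-bounds on the $x$-dependence used there are replaced by the $C^\tau$-bounds provided by the hypothesis $p\in C^\tau S^m_{1,0}$.
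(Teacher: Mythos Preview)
Your proposal is correct and follows essentially the same route as the paper's proof: both rewrite $\widetilde{p}(x,D_x)$ via a double symbol obtained from the mean-value matrix and Jacobian factor, invoke Theorem~\ref{thm:reduced left symbol} to reduce to a left symbol with asymptotic expansion and $C^\tau S^{m-N-1}_{1,0}$ remainder, dispose of the remainder by choosing $N$ large, and then verify the transmission condition term by term using the block-triangular form of the Jacobian at $x_n=0$ to transform the polynomial-plus-remainder expansion \eqref{eq:TransmissionCond}. The only points the paper makes slightly more explicit than you do are (i) that the matrix $M$ is invertible only near the diagonal, which is handled by modifying $p$ by an operator with $C^\tau$--$C^\infty$ kernel, and (ii) that the new coefficients $s'_k$ are obtained \emph{successively} in $l=0,1,\ldots$, since the affine substitution $\xi_n\mapsto B^t\underline{\xi}'+C\underline{\xi}_n$ mixes powers of $\underline{\xi}_n$ and $\underline{\xi}'$; your phrase ``the regime $|\xi_n|\geq\langle\xi'\rangle$ is mapped into a comparable regime'' glosses over this recursion, but the argument you defer to \cite[Section~2.2]{FunctionalCalculus} supplies exactly this.
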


\begin{proof}
 Let us denote by $\kappa'$ the Jacobian matrix $\left(\partial\kappa_i/\partial x_j\right)$.  For $x,y\in\R^n$  we  set $\underline{x}:=\kappa(x), \underline{y}:=\kappa(y)$ and have 
\begin{equation*}
\underline{x}-\underline{y}=\kappa(x)-\kappa(y)=M(x,y)(x-y),
\end{equation*}
where
\begin{equation*}
M(x,y):=\int_0^1\kappa'(x+t(y-x))\,dt.
\end{equation*}
Here $M(x,x)=\kappa'(x)$ is invertible. Therefore $M(x,y)$ is invertible for $(x,y)$ in a neighborhood of the diagonal $\{(x,y)\in\overline{\R^n_+}\times\overline{\R^n_+}:\ x=y\}$.\\
The fact that $\kappa$ preserves the set $\partial \overline{\R^n_+}$ implies that $\kappa_n(x',0)=0$ for all $x'\in\R^{n-1}$. Then, by Taylor's formula, there exists a $C^\infty$-function $C_1\not=0$, such that
\begin{equation*}
\underline{x}_n=\kappa_n(x',x_n)=\kappa_n(x',0)+x_n\int_0^1\partial_{x_n}\kappa_n(x',hx_n)\,dh=C_1(x)x_n.
\end{equation*}
Since $\partial_{x_j}\kappa_n(x',0)=0$ for $j=1,\ldots,n-1$, the matrix $M(x,y)$ can be written in blocks
\begin{equation*}
M(x,y)=\twobytwo{A(x,y)}{B(x,y)}{B'(x,y)}{C(x,y)},
\end{equation*}
with $A:=(M_{jk})_{1\leq j,k\leq n-1}$, $B:=(M_{jn})_{1\leq j\leq n-1}$, $B':=(M_{nk})_{1\leq k\leq n-1}$, $C:=M_{nn}$, having the following properties for all $x',y'\in\R^{n-1}$:
\begin{enumerate}
\item $A_0(x'):=A(x',0,x',0)=(\partial_{x_k}\kappa_j(x',0))_{j,k\leq n-1}$,
\item $B_0(x'):=B(x',0,x',0)=(\partial_{x_n}\kappa_j(x',0))_{j\leq n-1}$,
\item $B'(x',0,y',0)=0$,
\item $C_0(x'):=C_1(x',0):=C(x',0,x',0)=\partial_{x_n}\kappa_n(x',0)$,
\end{enumerate}
with $A_0(x')$ and $C_0(x')$ invertible with bounded inverses. One also has
\begin{equation*}
\underline{x}'-\underline{y}'=A(x,y)(x'-y')+B(x,y)(x_n-y_n).
\end{equation*}
One can also add to $p$ an operator with $C^\tau - C^\infty$--kernel such that the invertibility holds on $\supp(K_p)$, where $K_p$ denotes the Schwartz kernel of $p(x,D_x)$. Here it is easy to observe that the class of pseudodifferential operators with a  $C^\tau - C^\infty$--kernel are invariant with respect to a smooth coordinate transformations.\\
The relation $\xi=M(x,y)^t\underline{\xi}$ can be written as
\begin{equation*}
\xi=\begin{pmatrix}\xi'\\\xi_n\end{pmatrix}=\begin{pmatrix}{A(x,y)^t\underline{\xi}'}+B'(x,y)^t\underline{\xi}_n\\ {B(x,y)^t\underline{\xi}'}+C(x,y)\underline{\xi}_n\end{pmatrix}
\end{equation*}
and in particular
\begin{align}
\xi'&=A^t\underline{\xi}' \text{ for } (x,y)=(x',0,y',0), \label{xi'}\\
\xi_n&={B^t\underline{\xi}'}+C\underline{\xi}_n. \label{xin}
\end{align}
When this holds and $x'-y'$ is small,
\begin{equation*}
\underline{\xi}=\begin{pmatrix}\underline{\xi}'\\ \underline{\xi}_n\end{pmatrix}=\begin{pmatrix}{(A^t)^{-1}{\xi}'}\\ -\frac{1}{C}{B^t(A^t)^{-1}{\xi}'}+\frac{1}{C}{\xi}_n\end{pmatrix}.
\end{equation*}
Moreover, when we set $\underline{v}(\underline{\xi})=v(\xi)$, we have for the corresponding derivatives:
\begin{align}
D_{\underline{\xi}'}\underline{v}(\underline{\xi})&=\left(\sum_{k=1}^n\frac{\partial\xi_k}{\partial\underline{\xi}_j}D_{\xi_k}v(\xi)\right)_{1\leq j\leq n-1}=AD_{\xi'}v(\xi)+BD_{\xi_n}v(\xi)\label{Dxi'}\\
D_{\underline{\xi}_n}\underline{v}(\underline{\xi})&=CD_{\xi_n}v(\xi). \label{Dxin}
\end{align}

When \eqref{xi'} and \eqref{xin} hold, since $\kappa$ is a bounded smooth diffeomorphism there is a positive function  $R(x', y')$ defined for $x'-y'$ close to zero, so that 
\begin{equation}
\label{eq:constant xi' undelrinexi'}
R(x', y')\langle\xi'\rangle\leq \langle\underline{\xi}'\rangle\leq R^{-1}(x', y')\langle\xi'\rangle.
\end{equation}
Moreover, we have the usual equivalence $\langle\xi\rangle\sim\langle\underline{\xi}\rangle$.\\

For $u\in\mathcal{S}(\R^n)$ we have
\[
(p(x,D_x)(u\circ\kappa))\circ\kappa^{-1}=:p_\kappa(x,D_x,x)u
\]
where 
\[
p_\kappa(\underline{x},\underline{y},\underline{\xi})=p(x,y,M(x,y)^t\underline{\xi}))|\det M(x,y)||\det\kappa'(y)^{-1}|
\]
with the previous notations. Alternatively,
\[
p_\kappa(x,y,\xi)=p(\kappa^{-1}(x),\kappa^{-1}(y),\tilde M(x,y)^t\xi)|\det\tilde M(x,y)||\det D(\kappa^{-1}(y))|
\]
and $\tilde M(x,y):=M(\kappa^{-1}(x),\kappa^{-1}(y))$, where $p_\kappa$ is as in Theorem \ref{thm:change of variables smooth pdos 2}.\\

Now we apply Theorem \ref{thm:reduced left symbol} to $p_\kappa$.  Let $l,N\in\N$. Then there exists $p_L\in C^\tau S^{m}_{1,0}(\R^n\times\R^n)$ such that 
\[
p_\kappa(x,D_x,x)u=p_L(x,D_x)u
\]
where, for all $\alpha\in\N^n$
\[
\xi_n^l\partial_\xi^\alpha p_L(x,\xi)=\xi_n^l\sum_{|\beta|\leq N}\left.\dfrac{1}{\alpha!}\partial_\xi^{\alpha+\beta}D_y^\beta p_\kappa(x,y,\xi)\right|_{y=x}+\xi_n^lr_{N,\alpha}(x,\xi)
\]
and $r_{N,\alpha}\in C^\tau S^{m-N-|\alpha|-1}_{1,0}$ for $N\geq m+l-|\alpha|$. Hence
\begin{equation*}
\left|\xi_n^lr_{N,\alpha}(x,\xi)\right|\leq C\langle\xi\rangle^{m+l-N-1-|\alpha|}\leq C\langle\xi'\rangle^{m+l-|\alpha|+1}|\xi_n|^{-1}.
\end{equation*}
for all $|\xi_n|\geq\langle\xi'\rangle$.
The last inequality follows from the following:
Set $I:=\langle\xi\rangle^{m+l-N-|\alpha|}\langle\xi\rangle^{-1}$. 
In the case $m+l-|\alpha|\geq0$ we choose $N\in\N$ such that $m+l-N-|\alpha|\leq0$. Then
\[
I\leq C|\xi_n|^{-1}\leq C\langle\xi'\rangle^{m+l-|\alpha|+1}|\xi_n|^{-1}.
\]
because of  $\langle\xi\rangle^{-1}\leq|\xi_n|^{-1}$.
In the case $m+l-|\alpha|\leq0$ we  choose $N\geq0$. Then
\[
I\leq C\langle\xi'\rangle^{m+l-N-|\alpha|}|\xi_n|^{-1}\leq C\langle\xi'\rangle^{m+l-|\alpha|+1}|\xi_n|^{-1}.
\]
In the last case we even have
\begin{equation*}
  \left|\xi_n^l \partial_{\xi}^{\alpha} p_L(x',0,\xi)\right|
  \leq C_{l,\alpha}
  \langle\xi\rangle^{m+l-|\alpha|+1}\langle\xi\rangle^{-1}
  \leq C_{l,\alpha}
  \langle\xi'\rangle^{m+l-|\alpha|+1}|\xi_n|^{-1}.
\end{equation*}

\noindent Now observe that 
\begin{equation}
\label{eq:derivatives pkappa in Ctau}
\left.\partial_\xi^{\alpha+\beta}D_y^\beta p_\kappa(x,y,\xi)\right|_{y=x}\in C^\tau S^{m-|\beta|-|\alpha|}_{1,0}(\Rn\times\Rn)
\end{equation}
satisfies the transmission condition. Indeed, for $\alpha=\beta=0$ this holds by the following argument:\\
We must show that the estimate \eqref{eq:TransmissionCond} holds for the transformed symbol $p_\kappa$. We note that when $p$ satisfies the estimate \eqref{eq:TransmissionCond}, and $\xi$ and $\underline{\xi}$ are related by \eqref{xi'}--\eqref{xin}, using \eqref{eq:constant xi' undelrinexi'} we obtain
\begin{align*}
&\left\|\xi_n^l p(\cdot,0,\xi)-\sum_{k=-l}^{m} s_{k} (\cdot,\xi')\xi_n^{k+l}\right\|_{C^\tau(\R^{n-1})} \\
&\leq \left\|({B^t\underline{\xi}'}+C\underline{\xi}_n)^l p(\cdot,0,M^t\underline{\xi})-\sum_{k=-l}^{m} s_{k} (\cdot,A^t\underline{\xi}')({B^t\underline{\xi}'}+C\underline{\xi}_n)^{k+l}\right\|_{C^\tau(\R^{n-1})} \\
&\leq C_{l}\langle\xi'\rangle^{m+l+1}|\xi_n|^{-1} 
\leq C_{l}R^{-d-l-1}\langle\underline{\xi}'\rangle^{m+l+1}|\underline{\xi}_n|^{-1}.
\end{align*}
For small $x'-y'$ (where $C\not=0$), one can use these estimates successively for $l=0,1,\ldots$ to determine expansion coefficients $s'_k(\cdot,\underline{\xi}')$ such that
\begin{equation*}
\left\|\underline{\xi}_n^l p(\cdot,0,M^t\underline{\xi})-\sum_{k=-l}^{m} s'_{k} (\cdot,\underline{\xi}')\underline{\xi}_n^{k+l}\right\|_{C^\tau(\R^{n-1})} \leq R_l\langle\underline{\xi}'\rangle^{m+l+1}|\underline{\xi}_n|^{-1}
\end{equation*}
for some constant $R_l$, for all $l$. Here the $s'_k$ are polynomials in $\underline{\xi}'$, since the $s_k$ are polynomials in $\xi'$. \\

Now, for $\alpha\not=0$ or $\beta\not=0$, $\partial_\xi^{\alpha+\beta}D_y^\alpha p_\kappa(x,y,\xi)$ is a linear combination of terms of the form
\[
q(x,y,\tilde M(x,y)^t\xi)a(x,y)\xi^\gamma
\]
for $|\beta|\leq|\alpha|$, where $|\gamma|\leq|\alpha|+|\beta|$. Since $a$ comes from derivatives of $\kappa$, $a\in C^\infty_b(\R^n\times\R^n)$ and then $q\in C^\tau S^{m-|\alpha|-|\beta|-|\gamma|}_{1,0}(\R^n\times\R^n)$ satisfies the transmission condition.\\

Equation \eqref{eq:derivatives pkappa in Ctau} implies that there exist constants $C_{l,\alpha}$ and functions $s_{k,\alpha}$ such that
\begin{align*}
  &\left\|\xi_n^l \sum_{|\beta|\leq N}\left.\dfrac{1}{\alpha!} \partial_{\xi}^{\alpha+\beta}D_y^\beta p_\kappa(\cdot,0,y,\xi)\right|_{y=x}-\sum_{k=-l}^{m-|\alpha|} s_{k,\alpha} (\cdot,\xi')\xi_n^{k+l}\right\|_{C^\tau(\R^{n-1})}  \\
  &\hspace{1cm}\leq C_{l,\alpha}
  \langle\xi'\rangle^{m+l+1-|\alpha|}|\xi_n|^{-1} 
\end{align*}
for $|\xi_n|\geq\langle\xi'\rangle$. This implies that 
\begin{equation*}
  \left\|\xi_n^l \partial_{\xi}^{\alpha} p_L(\cdot,0,\xi)-\sum_{k=-l}^{m-|\alpha|} s_{k,\alpha} (\cdot,\xi')\xi_n^{k+l}\right\|_{C^\tau(\R^{n-1})}
  \leq C_{l,\alpha}
  \langle\xi'\rangle^{m+l+1-|\alpha|}|\xi_n|^{-1}.
\end{equation*}

Now for the derivatives of $p$, we note by \eqref{Dxi'}--\eqref{Dxin} that $D_{\underline{\xi}_n}p$ is directly related to $D_{{\xi}_n}p$, whereas $D_{\underline{\xi}_j}p$ for $j<n$ is a sum of derivatives of $p$, but again the terms can be regrouped to furnish the desired expansions as in \eqref{eq:TransmissionCond}. The norm in $C^\tau(\R^{n-1})$ can be included so it is found altogether that when $p(\cdot,\xi)$ satisfies the transmission condition at $x_n=0$, then so does $p(\cdot,M^t\underline{\xi})$. Derivatives in $\underline{x}$ and $\underline{y}$ can be handled as above including differentiations through $M(x,y)^t\underline{\xi}$. It follows that $p_\kappa:=p_L$ satisfies the transmission condition.
\end{proof}

\section{Nonsmooth Green Operators on Smooth Manifolds}\label{sec:Manifolds}

In this section we follow a similar approach as in \cite[Chapter 7-8]{Wloka}. We will consider Green operators acting on a smooth compact manifold $M$ of dimension $n$.\\

Let $U$ and $V$ be open subsets of $\overline{\R^n_+}$, and let $\kappa:U\to V$ be a diffeomorphism. If $A$ is a pseudodifferential operator with $C^\tau$--coefficients of order $m$, whose kernel has compact support in $U\times U$, then there exists a function $\psi\in C_0^\infty(U)$ such that $A=A(\psi\,\cdot)$. The operator $A:\mathcal{S}(\R^n)\to C^\tau(\R^n)$ can therefore be extended to $C^\infty(U)\to C^\tau(\R^n)$ by setting $Au:=A(\psi\cdot u)$ for all $u\in C^\infty(U)$. We then define the transformed operator $A_\kappa:C^\infty(V)\to C^\tau(\R^n)$ by
\[
 A_\kappa u(y)=
 \begin{cases}
  [A(u\circ\kappa)](\kappa^{-1}(y)) &\text{ if }y\in V,\\
  0 &\text{ if }y\notin V.
 \end{cases}
\]
The operator $A_\kappa$ represents a coordinate change from $U$ to $V$.

\begin{note}
Using this, we can prove Theorem \ref{theorem1} and Theorem \ref{thm: change variables Green} for the case of a bounded smooth diffeomorphism $\kappa$ defined on any open subset of $\overline{\R_+^n}$ on operators whose kernel has compact support.
\end{note}

Remember from Section \ref{section:change of variables Green} that given a bounded smooth diffeomorphism $\kappa:\overline{\R_+^n}\to \overline{\R_+^n}$, we have an induced diffeomorphism $\lambda:\R^{n-1}\to\R^{n-1}$ given by \eqref{eq:lambda}. In the following, given a local chart $\kappa:U\to V\subset\overline{\R_+^n}$ of the manifold $M$, we denote by $\lambda:\partial U\to\partial V$ the corresponding induced diffeomorphism, and by $\nu:=\kappa\times\lambda:U\times\partial U\to V\times\partial V$, where $\partial U:= \partial M\cap U$ and $\partial V:= \partial \overline{\R^n_+}\cap V$.\\
For a linear operator $A=\twobytwo{A_1}{A_2}{A_3}{A_4}:{\begin{matrix}
& C^\infty(U)\\ &\times \\ & C^\infty(\partial U)
\end{matrix}}
\to
{\begin{matrix}
& C^\tau(U)\\ &\times \\ & C^\tau(\partial U),
\end{matrix}}$ we will denote by ${{\nu}}^{-1,*}\circ A\circ\nu^*$ the operator 
\begin{equation}
\label{eq:notation nu-1 A nu}
{{\nu}}^{-1,*}\circ A\circ\nu^*:=\twobytwo{{\kappa^{-1,*}}\circ A_1\circ\kappa^*}{{\kappa}^{-1,*}\circ A_2\circ\lambda^*}{{\lambda}^{-1,*}\circ A_3\circ\kappa^*}{{\lambda}^{-1,*}\circ A_4\circ\lambda^*}.
\end{equation}

Let $A:C^\infty(M)\times C^\infty(\partial M)\to C^\tau(M)\times C^\tau(\partial M)$ be a linear operator, and let $\kappa:U\to V\subset\overline{\R_+^n}$ be some chart for $M$. Let $\nu^*f={(f_1\circ\kappa, f_2\circ \lambda)}$ denote the pull--back of $f{=(f_1,f_2)}\in C^\infty(V)\times C^\infty(\partial V)$ and ${\nu}^{-1,*}g=(g_1\circ\kappa^{-1},g_2\circ\lambda^{-1})$ the push--forward of $g{=(g_1,g_2)}\in C^\infty(U)\times C^\infty(\partial U)$. Let us denote by $i_U:C_0^\infty(U)\times C_0^\infty(\partial U)\to C^\infty(M)\times C^\infty(\partial M)$ the natural embedding and by $r_U:C^\tau(M)\times C^\tau(\partial M)\to C^\tau(U)\times C^\tau(\partial U)$ the restriction operator. Then, we can view $A$ as an operator ${A_U:=}r_U\circ A\circ i_U:C_0^\infty(U)\times C_0^\infty(\partial U)\to C^\tau(U)\times C^\tau(\partial U)$, and the push--forward operator $A_\nu:=\nu^{-1,*}\circ {A_U}\circ\nu^*:C_0^\infty(V)\times C_0^\infty(\partial V)\to C^\tau(V)\times C^\tau(\partial V)$ is the operator $A$ in local coordinates. We would like to consider the push--forward as an operator $A_\nu:C_0^\infty(\overline{\R_+^n})\times C_0^\infty(\R^{n-1})\to C^\tau(\overline{\R_+^n})\times C^\tau(\R^{n-1})$ so that we can use the theory of Green operators on $\overline{\R_+^n}$ given in the previous sections.\\

\begin{defin}
Let $\kappa:U\to V\subset\overline{\R_+^n}$ be a local chart for $M$, and $A:C^\infty(M)\times C^\infty(\partial M)\to C^\tau(M)\times C^\tau(\partial M)$ be a linear map with compact $\supp A\subset U\times \partial U$.{\footnote{{Following \cite[Def.\ 8.4]{Wloka} if $A:C^\infty(M)\to C^\infty(M)$, resp.\ $A:C_0^\infty(M)\to C^\infty(M)$, is a linear map, we define the support of $A$ to be the complement of the largest open set $\mathcal{O}\subset M$ such that for all $\varphi\in C^\infty(M)$, resp.\ $\varphi\in C_0^\infty(M)$,
\begin{enumerate}
\item $A\varphi(x)=0$ if $x\in\mathcal{O}$.
\item $A\varphi\equiv0$ if $\supp(\varphi)\subseteq\mathcal{O}$.
\end{enumerate}
}}} Let $\zeta\in C_0^\infty(U)$ be identically $1$ on a neighborhood of $\supp A$. We define a linear map $A_\nu=T(A;\nu):C_0^\infty(\overline{\R_+^n})\times C_0^\infty(\R^{n-1})\to  C^\tau(\overline{\R_+^n})\times C^\tau(\R^{n-1})$, called the push--forward of $A$ by $\nu$, as follows: for $f\in C_0^\infty(\overline{\R_+^n})\times C^\infty_0(\R^{n-1})$,
\begin{align*}
 T(A;\nu)f&:=[\zeta\cdot A(\zeta\cdot(f\circ\nu))]\circ\nu^{-1} =\nu^{-1,*}\circ\zeta\cdot A\circ\nu^*(\zeta\circ\nu^{-1})\cdot f,
\end{align*}
and extended by zero outside $\nu(U\times\partial U):=\kappa(U)\times\lambda(\partial U)=V\times\partial V$.
\end{defin}

The support of $T(A;\nu)$ is $\nu(\supp A)$, a compact subset of $\nu(U\times\partial U)=V\times\partial V$. Conversely, if $S:C_0^\infty(\overline{\R_+^n})\times C_0^\infty(\R^{n-1})\to C^\tau(\overline{\R_+^n})\times C^\tau(\R^{n-1})$ has compact support in $\nu(U\times\partial U)$, then $S=T(A;\nu)$ for some $A:C^\infty(M)\times C^\infty(\partial M)\to C^\tau(M)\times C^\tau(\partial M)$ with support in $U\times \partial U$. In fact the push--forward $A=T(S;\nu^{-1}):C^\infty(M)\times C^\infty(\partial M)\to C^\tau(M)\times C^\tau(\partial M)$ having support in $U\times \partial U$ is defined by
\begin{align*}
 T(S;\nu^{-1})f&:=[(\zeta\circ\nu^{-1})\cdot S((\zeta\circ\nu^{-1})\cdot(f\circ\nu^{-1}))]\circ\nu\\
 &=\nu^*\circ (\zeta\circ\nu^{-1})\cdot S\circ \nu^{-1,*}\zeta\cdot f.
\end{align*}
The push--forwards $A\mapsto T(A;\nu)$ and $S\mapsto T(S;\nu^{-1})$ also called push--forward and pull--back respectively, are mutually inverse one to one maps between linear maps $A:C^\infty(M)\times C^\infty(\partial M)\to C^\tau(M)\times C^\tau(\partial M)$ with compact support in $U\times \partial U$ and linear maps $S:C_0^\infty(\overline{\R_+^n})\times C^\infty(\R^{n-1})\to C^\tau(\overline{\R_+^n})\times C^\tau(\R^{n-1})$ with compact support in $\nu(U\times\partial U)$.\\

Similar to the definition of pseudodifferential operators on a manifold as in \cite[Definition 8.7]{Wloka}, we define Green operators with $C^\tau$--coefficients on a manifold. We will continue using the notation given in Equation \eqref{eq:notation nu-1 A nu} and additionally, if 
\begin{equation*}
A=\twobytwo{A_1}{A_2}{A_3}{A_4}:{\begin{matrix}
& C^\infty(U)\\ &\times \\ & C^\infty(\partial U)
\end{matrix}}
\to
{\begin{matrix}
& C^\tau(U)\\ &\times \\ & C^\tau(\partial U),
\end{matrix}}  
\end{equation*}
 is a linear operator defined on $U\subseteq\overline{\R_+^n}$, given two functions $\varphi,\psi\in C_0^\infty(\overline{\R_+^n})$, we denote by $\varphi A\psi$ the operator
\begin{equation}
\label{eq:notation phi A psi}
\varphi A\psi:=\twobytwo{({\varphi|_U})A_1({\psi|_U})}{({\varphi|_U}) A_2({\psi|_{\partial U}})}{({\varphi|_{\partial U}}) A_3({\psi|_U})}{({\varphi|_{\partial U}}) A_4({\psi|_{\partial U}})}.
\end{equation}

\begin{defin}
\label{def:green op in a mfld}
 Let $M$ be a compact $C^\infty$--manifold with boundary. A linear operator $A:C^\infty(M)\times C^\infty(\partial M)\to C^\tau(M)\times C^\tau(\partial M)$ is called a Green operator with $C^\tau$--coefficients on $M$ of order $m\in\R$, if for every coordinate chart $(U,\kappa)$ on $M$, and for all $\varphi,\psi\in C_0^\infty(U)$, the push--forward operator
 \[
 T(\varphi A\psi;\nu)= \nu^{-1,*}(\varphi A\psi)\nu^*=(\varphi\circ\nu^{-1})T(A;\nu)(\psi\circ\nu^{-1}),
 \]
 is a Green operator with $C^\tau$--coefficients on $\overline{\R_+^n}$ of order $m$.\\
\end{defin}
We note that, if $(U,\kappa)$, $(U',\kappa')$ are two coordinate charts with $U\cap U'=\emptyset$, then $(U\cup U',\kappa\cup \kappa')$ is another coordinate chart, where 
\[
 (\kappa\cup\kappa')(x):=\begin{cases}
                         \kappa(x) & \text{ if }x\in U,\\
                         \kappa'(x) & \text{ if }x\in U',\\	      
                        \end{cases}
\]
and without loss of generality we take $\kappa$ and $\kappa'$ to have disjoint image so that $\kappa\cup\kappa'$ has an inverse.

\begin{note}
 If both $\kappa$ and $\kappa'$ are bounded smooth diffeomorphisms, then $\kappa\cup\kappa'$ is also a bounded smooth diffeomorphism.
\end{note}

\begin{defin}
\label{def:Green op mfld CtauCinfty kernel}
 The Green operator $A$ has $C^\tau - C^\infty$--kernel, if for every coordinate chart $(U,\kappa)$ on $M$, and for all $\varphi,\psi\in C_0^\infty(U)$, the push--forward operator $\nu^{-1,*}(\varphi A\psi)\nu^*$ is a Green operator on $\overline{\R_+^n}$ with $C^\tau - C^\infty$--kernel as in Definition \ref{def:Green op CtauCinfty kernel}.
\end{defin}

Similarly to \cite[Proposition 8.8]{Wloka}, we have that Green operators with $C^\tau$--coefficients on $M$ are quasi-local:
\begin{prop}
\label{prop:phi A psi Green op mfld}
 Let $\varphi,\psi\in C^\infty(M)$ be such that $\supp\varphi\cap\supp\psi=\emptyset$ and let $A$ be a Green operator with $C^\tau$--coefficients on $M$. Then $\varphi A\psi$ is a Green operator with $C^\tau - C^\infty$--kernel.
\end{prop}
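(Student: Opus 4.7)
The plan is to verify Definition~\ref{def:Green op mfld CtauCinfty kernel} directly. First I would fix an arbitrary coordinate chart $(W, \mu)$ of $M$ with associated $\nu := \mu\times \lambda$ on $W\times\partial W$, and arbitrary $\chi_1, \chi_2 \in C_0^\infty(W)$; the task then reduces to showing that the push-forward $T(\chi_1(\varphi A \psi)\chi_2;\nu)$ has $C^\tau - C^\infty$-kernel on $\overline{\R^n_+}$. Setting $\tilde\varphi := \chi_1 \varphi$ and $\tilde\psi := \chi_2 \psi$, both lie in $C_0^\infty(W)$ and still satisfy $\supp\tilde\varphi\cap\supp\tilde\psi=\emptyset$. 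I would then introduce auxiliary cutoffs $\eta,\eta'\in C_0^\infty(W)$ with $\eta\equiv 1$ on $\supp\tilde\varphi$ and $\eta'\equiv 1$ on $\supp\tilde\psi$, so that
\[
\chi_1(\varphi A \psi)\chi_2 = \tilde\varphi\,(\eta A \eta')\,\tilde\psi.
\]

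By Definition~\ref{def:green op in a mfld} applied to $A$ in the chart $(W,\mu)$, the push-forward
\[
B := \nu^{-1,*}(\eta A \eta')\,\nu^{*} = \twobytwo{p(x,D_x)_++g(x,D_x)}{k(x,D_{x'})}{t(x',D_x)}{s(x',D_{x'})}
\]
is a Green operator with $C^\tau$-coefficients on $\overline{\R^n_+}$. Compatibility of the push-forward with pointwise multiplication then yields, in the notation of \eqref{eq:notation phi A psi},
\[
T(\chi_1(\varphi A \psi)\chi_2;\nu) = \Phi\cdot B\cdot \Psi,
\]
where $\Phi := \tilde\varphi\circ \mu^{-1}$ and $\Psi := \tilde\psi\circ \mu^{-1}$, extended by zero outside $\mu(W)$. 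Both functions lie in $C_0^\infty(\mu(W))\subseteq C_b^\infty(\overline{\R^n_+})$ and inherit $\supp\Phi\cap\supp\Psi=\emptyset$ from the disjointness on $M$, since $\nu$ is a diffeomorphism.

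It remains to check that each of the five building blocks of $\Phi B \Psi$ satisfies Definition~\ref{def:Green op CtauCinfty kernel}. The bulk pseudodifferential part $\Phi (p(x,D_x))_+\Psi$ is the truncation of $\Phi p(x,D_x)\Psi$ and has $C^\tau-C^\infty$-kernel by Proposition~\ref{prop:kernel phi p psi}; the singular Green part $\Phi g(x,D_x)\Psi$ by Proposition~\ref{prop:phi k(x,Dx)psi CtauCinfty kernel sgo}; the Poisson entry $\Phi(x)k(x,D_{x'})\Psi|_{\partial}(x')$ by Proposition~\ref{prop:phi k(x,Dx)psi CtauCinfty kernel Poisson}; the trace entry $\Phi|_{\partial}(x')\,t(x',D_x)\Psi(x)$ by Proposition~\ref{prop:phi k(x,Dx)psi CtauCinfty kernel trace}; and the boundary pseudodifferential entry $\Phi|_{\partial}\,s(x',D_{x'})\Psi|_{\partial}$ by Proposition~\ref{prop:kernel phi p psi} applied on $\R^{n-1}$. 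In each case the required form of the disjointness hypothesis follows from $\supp\Phi\cap\supp\Psi=\emptyset$, using additionally $\supp\Phi|_{\partial}\times\{0\}\subseteq \supp\Phi$ (and the symmetric inclusion for $\Psi$) to deduce the Poisson, trace, and singular Green conditions that involve the boundary trace. The main obstacle is not analytical but organizational: correctly matching each of the matrix components of $B$ with the appropriate localization result from Section~\ref{sec:Localization}, and checking that the specific form of disjointness each result demands is indeed implied by $\supp\varphi\cap\supp\psi=\emptyset$.
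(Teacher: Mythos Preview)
Your argument is correct and in fact more direct than the paper's. The key observation you exploit is that the condition in Definition~\ref{def:Green op mfld CtauCinfty kernel} is already chart-local: once an arbitrary chart $(W,\mu)$ and test functions $\chi_1,\chi_2\in C_0^\infty(W)$ are fixed, the products $\tilde\varphi=\chi_1\varphi$ and $\tilde\psi=\chi_2\psi$ automatically have compact support in $W$, so a single application of Definition~\ref{def:green op in a mfld} (with the auxiliary cutoffs $\eta,\eta'$) produces the push-forward $B$ to which the localization results of Section~\ref{sec:Localization} apply directly.

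The paper instead follows the pattern of \cite[Proposition~8.8]{Wloka}: it covers $\supp\varphi$ and $\supp\psi$ by disjoint families of chart domains, introduces a subordinate partition of unity, and joins each pair $(U_j,\kappa_j)$, $(U_k,\kappa_k)$ into a single chart $(U_{jk},\kappa_{jk})$ in order to view $\phi_j A\phi_k$ as a Green operator on $\overline{\R^n_+}$. After establishing the $C^\tau$--$C^\infty$ kernel for each summand $\chi_j\varphi(\phi_j A\phi_k)\psi\chi_k$ in that auxiliary chart, a further coordinate change $\nu\circ\nu_{jk}^{-1}$ is needed to reach an arbitrary chart $(U,\kappa)$. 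Your route avoids both the partition of unity and the chart-joining step; the price is only the introduction of $\eta,\eta'$, which is harmless. The paper's approach, on the other hand, makes the global decomposition of $\varphi A\psi$ into chart-supported pieces explicit, which is closer in spirit to how one would construct a kernel representation on all of $M$ rather than chart by chart.
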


\begin{proof}
 Since the supports of $\varphi$ and $\psi$ are compact and disjoint from each other,
\begin{align*}
 \supp\varphi&\subset\mathcal{U}:=U_1\cup\ldots\cup U_p,\qquad
 \supp\psi\subset\mathcal{V}:=U_{p+1}\cup\ldots\cup U_N,
\end{align*}
where $(U_j,\kappa_j)$, $j=1,\ldots,N$, are some coordinate charts on $M$ and $\mathcal{U}\cap\mathcal{V}=\emptyset$. We choose an open set ${W}$ such that 
\[
 \supp\varphi\cup\supp\psi\subset{W}\subset\overline{{W}}\subset\mathcal{U}\cup\mathcal{V}.
\]
Then $U_1,\ldots,U_N,\overline{{W}}^c$ is an open cover of $M$, and we choose a subordinated partition of unity $\{\phi_j\}_{1\leq j\leq N+1}$, that is, $\sum_{j=1}^{N+1}\phi_j=1$, $\supp\phi_j\subset U_j$, for all $j=1,\ldots,N$ and $\supp\phi_{N+1}\cap{W}=\emptyset$.\\
Then 
\begin{equation*}
 \varphi A\psi=\sum_{j,k}\phi_j\varphi A\psi\phi_k 
\end{equation*}
 where the sum is taken over the indices $j=1,\ldots,p$, and $k=p+1,\ldots,N$, since these are the only indices for which $\phi_j\varphi$ and $\psi\phi_k$ do not necessarily vanish.\\
Now choose functions $\chi_j\in C_0^\infty(U_j)$ such that $\chi_j\equiv1$ on $\supp\phi_j$. Then
\begin{equation}
 \label{eq:varphi A psi partition of unity}
 \varphi A\psi=\sum_{j,k}\chi_j\varphi(\phi_j A\phi_k)\psi\chi_k.
\end{equation}
Joining the two coordinate charts $(U_j,\kappa_j)$ and $(U_k,\kappa_k)$ as above, we obtain a coordinate chart $(U_{jk},\kappa_{jk}):=(U_j\cup U_k,\kappa_j\cup\kappa_k)$, and then by Definition \ref{def:green op in a mfld} the operator ${\nu_{jk}}^{-1,*}(\phi_j A\phi_k)\nu_{jk}^*$ is a Green operator with $C^\tau$--coefficients on $\overline{\R_+^n}$ of order $m$, where as before $\nu_{jk}:=\kappa_{jk}\times\lambda_{jk}$. Since $\supp(\chi_j\varphi)\cap\supp(\psi\chi_k)=\emptyset$, Propositions \ref{prop:kernel phi p psi}, \ref{prop:phi k(x,Dx)psi CtauCinfty kernel Poisson}, \ref{prop:phi k(x,Dx)psi CtauCinfty kernel trace}, \ref{prop:phi k(x,Dx)psi CtauCinfty kernel sgo}, imply that the push--forward operator ${\nu_{jk}}^{-1,*}(\chi_j\varphi(\phi_j A\phi_k)\psi\chi_k)\nu_{jk}^*$ of each term in the sum \eqref{eq:varphi A psi partition of unity} has a $C^\tau-C^\infty$--kernel. \\ Hence, if $(U,\kappa)$ is an arbitrary chart and $\tilde{\varphi},\tilde{\psi}\in C^\infty_0(U)$, 
$$
\nu^{-1,*}{\nu_{jk}}^{-1,*}(\chi_j\tilde{\varphi}\varphi(\phi_j A\phi_k)\psi\tilde{\psi}\chi_k)\nu_{jk}^*\nu^{*}
$$ 
has a $C^\tau-C^\infty$--kernel as well. Therefore $\nu^{-1,*}\tilde{\varphi}\varphi A\psi\tilde{\psi}\nu^{*}$ has a $C^\tau - C^\infty$--kernel. Since $(U,\kappa)$ and $\tilde{\varphi},\tilde{\psi}\in C^\infty_0(U)$ have been arbitrary, $\varphi A\psi$ has a $C^\tau-C^\infty$--kernel.
\end{proof}

Our main result is as follows:
\begin{thm}
Let $\mathcal{U}$ be an atlas on $M$. A linear operator $A$ is a Green operator with $C^\tau$--coefficients on $M$ of order $m$ if and only if the following conditions are satisfied:
\begin{enumerate}
 \item[(i)] For every chart $(U,\kappa)\in\mathcal{U}$ and for all $\varphi,\psi\in C_0^\infty(U)$, the operator $\nu^{-1,*}(\varphi A\psi)\nu^*$ is a Green operator with $C^\tau$--coefficients on $\overline{\R_+^n}$ of order $m$.
 \item[(ii)] For all $\varphi,\psi\in C^\infty(M)$ with $\supp\varphi\cap\supp\psi=\emptyset$, the operator $\varphi A\psi$ has a $C^\tau - C^\infty$--kernel.
\end{enumerate}
\end{thm}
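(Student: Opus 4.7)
The proof splits into the two implications.

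For the forward direction, condition (i) is exactly Definition \ref{def:green op in a mfld} specialized to the charts of the atlas $\mathcal{U}$, and condition (ii) is precisely Proposition \ref{prop:phi A psi Green op mfld}.

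For the converse, I would fix an arbitrary chart $(U',\kappa')$ on $M$ and cutoffs $\varphi',\psi'\in C_0^\infty(U')$, and show that $\nu'^{-1,*}(\varphi' A\psi')\nu'^*$ is a Green operator of order $m$ on $\overline{\R^n_+}$. The plan is to choose a finite partition of unity $\{\phi_j\}_{j=1}^N$ subordinate to $\mathcal{U}$, sufficiently fine so that whenever $\supp\phi_j\cap\supp\phi_k\neq\emptyset$ there is a common chart $(U_{j,k},\kappa_{j,k})\in\mathcal{U}$ containing $\supp\phi_j\cup\supp\phi_k$. Such a partition exists by a Lebesgue number argument applied to the compact manifold $M$ and the open cover $\mathcal{U}$. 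Decompose
\[
\varphi' A\psi' = \sum_{j,k=1}^N \varphi'\phi_j\, A\, \phi_k\psi',
\]
and treat each term separately.

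If $\supp\phi_j\cap\supp\phi_k=\emptyset$, then $\varphi'\phi_j$ and $\phi_k\psi'$ are smooth and have disjoint supports, so by condition (ii) the operator $\varphi'\phi_j A\phi_k\psi'$ has a $C^\tau-C^\infty$-kernel. Since both supports lie in $U'$, the push-forward by $\nu'$ is well-defined, and an elementary change of variables on kernels shows that the push-forward again has a $C^\tau-C^\infty$-kernel in the sense of Definition \ref{def:Green op CtauCinfty kernel}, hence in particular is a Green operator of order $m$. If instead $\supp\phi_j\cap\supp\phi_k\neq\emptyset$, then $\varphi'\phi_j,\phi_k\psi'\in C_0^\infty(U_{j,k})$, and condition (i) gives that $\nu_{j,k}^{-1,*}(\varphi'\phi_j A\phi_k\psi')\nu_{j,k}^*$ is a Green operator of order $m$ on $\overline{\R^n_+}$. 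Passing from the chart $(U_{j,k},\kappa_{j,k})$ to $(U',\kappa')$ corresponds to the transition diffeomorphism $\mu:=\nu'\circ\nu_{j,k}^{-1}$ between open subsets of $\overline{\R^n_+}$ that preserves the boundary; invoking the first Remark of the present section, I can replace $\mu$ by a bounded smooth diffeomorphism of $\overline{\R^n_+}$ that agrees with $\mu$ on a neighborhood of $\nu_{j,k}(\supp\phi_j\cup\supp\phi_k)$, which does not alter the compactly-supported operator. Theorem \ref{theorem1} together with the three parts of Theorem \ref{thm: change variables Green} then yields that $\nu'^{-1,*}(\varphi'\phi_j A\phi_k\psi')\nu'^*$ is a Green operator of order $m$ on $\overline{\R^n_+}$. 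Summing finitely many such Green operators completes the proof.

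The main obstacle is the extension step in the overlap case: the transition $\mu$ is a priori defined only on an open subset of $\overline{\R^n_+}$ and does not naively extend to a bounded smooth diffeomorphism of the whole half-space. This is exactly what the first Remark of this section enables, by exploiting the compact support of each summand. A secondary technical point is that push-forward by a bounded smooth diffeomorphism preserves the $C^\tau-C^\infty$-kernel property of Definition \ref{def:Green op CtauCinfty kernel}; this follows from a direct application of the chain rule and the change-of-variables formula to the kernel representation.
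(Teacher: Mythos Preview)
Your proof is correct and follows essentially the same strategy as the paper's. The necessity direction is identical. For sufficiency both arguments localize by a partition of unity subordinate to $\mathcal{U}$, use condition~(ii) for the off-diagonal pieces to obtain $C^\tau$--$C^\infty$-kernel remainders, and use condition~(i) together with Theorems~\ref{theorem1} and~\ref{thm: change variables Green} for the transition-map step on the diagonal pieces.

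The only bookkeeping difference is that the paper uses a single-index decomposition $A=\sum_j\varphi_jA\psi_j+\sum_j\varphi_jA(1-\psi_j)$ with $\psi_j\equiv 1$ on $\supp\varphi_j$, so each ``good'' term $\varphi_jA\psi_j$ automatically lives in one chart $U_j\in\mathcal{U}$; this avoids your Lebesgue-number argument for finding a common chart $U_{j,k}$ when $\supp\phi_j\cap\supp\phi_k\neq\emptyset$. Conversely, your double-index decomposition is a bit more symmetric. For the extension of the transition map to a bounded smooth diffeomorphism of $\overline{\R^n_+}$, the paper achieves this by further refining the partition of unity rather than by invoking the Remark you cite; both devices serve the same purpose of ensuring Theorems~\ref{theorem1} and~\ref{thm: change variables Green} apply. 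The point you flag at the end---that push-forward by a bounded smooth boundary-preserving diffeomorphism preserves the $C^\tau$--$C^\infty$-kernel property---is used in the paper as well (implicitly in the proof of Proposition~\ref{prop:phi A psi Green op mfld}) and is indeed a direct chain-rule verification.
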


\begin{proof}
 The necessity follows from Definition \ref{def:green op in a mfld} and Proposition \ref{prop:phi A psi Green op mfld}. For the sufficiency, choose a partition of unity $\{\varphi_j:{j\in J}\}$ subordinated to the atlas $\mathcal{U}${, i.e., for every $j\in J$ there is some $(U_j,\kappa_j)\in \mathcal{U}$ such that $\supp\varphi_j \subseteq U_j$,} and corresponding functions $\psi_j\in C_0^\infty(U_j)$ such that $\psi_j\equiv1$ on the support of $\varphi_j$.

Then we have
\[
 A\ \cdot=\sum_j\varphi_j A\ \cdot=\sum_j\varphi_j A[\psi_j\ \cdot\ ]+\sum_j\varphi_j A[(1-\psi_j)\ \cdot\ ].
\]
Let $\varphi,\psi\in C_0^\infty(U_j)$. Since $\supp\varphi_j\cap\supp(1-\psi_j)=\emptyset$, by Condition $(ii)$ the terms $\varphi_j A[(1-\psi_j)\cdot]$ have $C^\tau - C^\infty$--kernels. After composing them with the multiplication operators by $\varphi$ and $\psi$,  by Definition \ref{def:Green op mfld CtauCinfty kernel} the push--forward of such terms are Green operators on $\overline{\R_+^n}$ with $C^\tau - C^\infty$--kernel, which has compact support in $\overline{\R_+^n}\times\R^{n-1}\times\overline{\R_+^n}\times\R^{n-1}$. Hence by Lemma \ref{lem:green op order -infty has CtauCinfty kernel} and Definition \ref{def:green op in a mfld} the operators $\varphi\varphi_j A[(1-\psi_j)\psi\cdot]$ are Green operators with $C^\tau$--coefficients on $M$ of order $-\infty$.\\
Let $(U,\kappa)$ be an arbitrary chart. We can assume without loss of generality that $\kappa\circ\kappa_j^{-1}|_{A_j}$, where $A_j:= \kappa_j(\supp \psi_j)$, 
 can be extended to a bounded smooth diffeomorphism from $\overline{\R_+^n}$ to $\overline{\R_+^n}$. Otherwise we replace $\varphi_j$ and $\psi_j$ by some $\varphi_{j,1},\ldots,\varphi_{j,N}, \psi_{j,1},\ldots,\psi_{j,N}\in C_0^\infty(U_j)$ (with sufficiently small support) such that $\varphi_j = \sum_{k=1}^N \varphi_{j,k}$ and $\psi_{j,k}\equiv 1$ on $\supp \varphi_{j,k}$  and the previous condition is valid for $\varphi_{j,k}$.
We will show that for any $\varphi,\psi\in C_0^\infty(U)$, the push--forward operators $\big(\varphi\varphi_j A\psi_j\psi\big)_\nu:=\nu^{-1,*}\varphi\varphi_j A\psi_j\psi\nu^*$ are Green operators with $C^\tau$--coefficients on $\overline{\R_+^n}$ of order $m$. We have
\begin{equation}
\label{eq:operator quasilocal}
 \big(\varphi\varphi_j A\psi_j\psi\big)_\nu=\big[\big(\varphi\varphi_j A\psi_j\psi\big)_{\nu_j}\big]_{\nu\circ\nu_j^{-1}}.
 \end{equation}
By Condition $(i)$ we have that $\big(\varphi\varphi_j A\psi_j\psi\big)_{\nu_j}$ is a Green operator with $C^\tau$--coefficients on $\overline{\R_+^n}$ of order $m$. 
Therefore by Theorem \ref{theorem1} and Theorem \ref{thm: change variables Green}, the operator \eqref{eq:operator quasilocal} is a Green operator with $C^\tau$--coefficients on $\overline{\R_+^n}$ of order $m$, as we wanted to show.
\end{proof}

\def\cprime{$'$} \def\ocirc#1{\ifmmode\setbox0=\hbox{$#1$}\dimen0=\ht0
  \advance\dimen0 by1pt\rlap{\hbox to\wd0{\hss\raise\dimen0
  \hbox{\hskip.2em$\scriptscriptstyle\circ$}\hss}}#1\else {\accent"17 #1}\fi}



  

\end{document}